\documentclass[a4paper,10pt,twoside]{amsart}
%
\usepackage{amssymb}
\usepackage[all]{xy}
%
%
\newcommand{\R}{\mathbb{R}}

\newcommand{\Z}{\mathbb{Z}}
\newcommand{\Q}{\mathbb{Q}}

\newcommand{\liealg}[1]{\mathfrak{#1}}
\newcommand{\skal}[2]{\langle{#1},{#2}\rangle}
\newcommand{\Mod}[2]{{#1}/\!\raisebox{-.575ex}{\ensuremath{#2}}}
%
%
\newtheorem{theorem}{Theorem}[section]
\newtheorem{lemma}[theorem]{Lemma}
\newtheorem{proposition}[theorem]{Proposition}
\newtheorem{corollary}[theorem]{Corollary}

\theoremstyle{definition}
\newtheorem{definition}[theorem]{Definition}
\newtheorem{example}[theorem]{Example}

\theoremstyle{remark}
\newtheorem{remark}[theorem]{Remark}

\numberwithin{equation}{section}

\begin{document}
%
\title{Riemannian Foliations and the Topology of Lorentzian Manifolds}
%
%
\author{Kordian L\"{a}rz}
\address{Humboldt-Universit\"{a}t Berlin, Institut f\"{u}r Mathematik R. 1.305, Rudower Chaussee 25, 12489 Berlin}
\email{laerz@math.hu-berlin.de}
\thanks{{\sc Acknowledgment:} I am grateful to Helga Baum for many useful discussions on the topic.}
%
%
\subjclass[2000]{53C29, 53C50}
\keywords{Lorentzian geometry, holonomy group, Riemannian foliation, pp-wave}
%
%
%
\begin{abstract}
	A parallel lightlike vector field on a Lorentzian manifold $X$ naturally defines a foliation $\mathcal{F}$ of codimension one.
	If either all leaves of $\mathcal{F}$ are compact or $X$ itself is compact admitting a compact leaf and the (transverse) Ricci curvature
	is non-negative then a Bochner type argument implies that the first Betti number of $X$ is bounded by $1 \leq b_{1} \leq \dim X$ if $X$ is
	compact and $0 \leq b_{1} \leq \dim X -1$ otherwise. We show that these bounds are optimal and depending on the holonomy of $X$ we obtain
	further results. Finally, we classify the holonomy representations for those $X$ admitting a compact leaf with finite fundamental group.
\end{abstract}
\maketitle
%
\section{The Class of Decent Spacetimes}
Let $(X,g^{L})$ be a Lorentzian manifold and $\nabla^{L}$ its Levi-Civita connection.\footnote{All manifolds are assumed to be connected,
												Lorentzian manifolds are assumed to be orientable.}
Suppose $(X,g^{L})$ admits a $\nabla^{L}$-parallel lightlike subbundle $\Xi \subset TX$ of rank one, i.e.,
$\nabla^{L}\Gamma(U \subset X,\Xi) \subset \Gamma(U,\Xi)$. We write $\Xi^{\perp} \subset TX$ for its orthogonal complement. Thus,
$\Xi^{\perp} \supset \Xi$ has codimension one. Being a $\nabla^{L}$-parallel subbundle, $\Xi$ and therefore $\Xi^{\perp}$ induce a foliation
$\mathcal{X}$ of dimension one and a foliation $\mathcal{X}^{\perp}$ of codimension one on $X$. Consider the vector bundle
$\mathcal{S}:= \mbox{Coker}(\Xi \hookrightarrow \Xi^{\perp})$. We have an induced metric $h^{\mathcal{S}}$ and an induced connection
$\nabla^{\mathcal{S}}$ on $\mathcal{S}$. Moreover, $h^{\mathcal{S}}$ has Riemannian signature and
$\nabla^{\mathcal{S}}h^{\mathcal{S}}=0$. We call $(\mathcal{S},h^{\mathcal{S}},\nabla^{\mathcal{S}})$ the (canonical) screen bundle of $(X,g)$. Given a
non-canonical splitting $s$ of the exact sequence
\begin{equation*}
	\xymatrix{ 0 \ar[r] & \Xi \ar[r] & \Xi^{\perp} \ar[r] & \mathcal{S} \ar@/_/[l]_{s} \ar[r] & 0}
\end{equation*}
we define $S:=s(\mathcal{S})$ and call it a (non-canonical) realization of $\mathcal{S}$ in $TX$. The connection $\nabla^{L}$ on $X$ induces a connection
on $S$ given by $\nabla^{S} := pr_{S} \circ \nabla^{L}|_{S}$.\par
The canonical bundle morphism $S \stackrel{F}{\rightarrow} \mathcal{S}$ is easily shown to be a vector bundle isomorphism such that
$\nabla^{S} = F^{*}\nabla^{\mathcal{S}}$ and $g|_{S \times S}=F^{*}h^{\mathcal{S}}$, i.e., $Hol(S,\nabla^{S})=Hol(\mathcal{S},\nabla^{\mathcal{S}})$.
Since $\Xi \subset S^{\perp}$ and $S^{\perp} \subset TX$ has signature $(1,1)$ the light cone in $S^{\perp}_{p}$ is the union of two lines
one of which is given by $\Xi_{p}$ and we derive
\begin{corollary}\label{complementary-distrib}
	Given a realization $S \subset TX$ of the screen bundle of $(X,g^{L})$ there is a uniquely defined lightlike subbundle
	$\Theta \subset TX$ of rank one with the following property: If $V \in \Gamma(U \subset X,\Xi)$ then there exists a unique section
	$Z \in \Gamma(U \subset X,\Theta)$ such that $g^{L}(V,Z)=1$.\qed
\end{corollary}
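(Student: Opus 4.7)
The plan is to define $\Theta$ fiberwise by using the signature-(1,1) structure of $S^\perp$ that the paper has already observed, and then to upgrade the pointwise definition to a smooth subbundle by writing down an explicit local section. The existence and uniqueness statement for $Z$ will then drop out of elementary linear algebra.

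Pointwise, $S_p^\perp$ is a 2-dimensional subspace of $T_pX$ of signature $(1,1)$, so its null cone is the union of exactly two distinct lines through $0$. One of them is $\Xi_p$ by hypothesis; I would simply \emph{define} $\Theta_p$ to be the other null line. This gives a pointwise rank-one distribution of lightlike lines and, by construction, is uniquely characterized.

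To check smoothness, I would work locally: near a given point choose a nowhere-vanishing $V\in\Gamma(U,\Xi)$ and a smooth local section $W\in\Gamma(U,S^\perp)$ that is pointwise linearly independent from $V$ (possible since $S^\perp$ has rank two). I then look for $Z=aV+bW$ satisfying $g^L(Z,Z)=0$ and $g^L(V,Z)=1$. Because $V$ is null, the first condition becomes $2ab\,g^L(V,W)+b^2 g^L(W,W)=0$, and the second gives $b\,g^L(V,W)=1$. The key point, which is the only nontrivial ingredient, is that $g^L(V,W)\neq 0$ everywhere on $U$: in a signature-$(1,1)$ plane the $g^L$-orthogonal complement of a null vector $V$ is precisely the null line through $V$, so any $W\in S^\perp\setminus\Xi$ pairs nontrivially with $V$. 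With this in hand, $b$ and then $a$ are determined by smooth formulas, producing a smooth local section of $\Theta$; the line bundle structure follows.

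Uniqueness of $Z$ given $V$ is immediate from the same linear system (two equations in two unknowns with invertible coefficient data), and the independence of $\Theta$ from the auxiliary choice of $W$ is clear from its intrinsic pointwise definition as the second null line of $S^\perp$. The only real obstacle in the argument is the non-degeneracy statement $g^L(V,W)\neq 0$, and even that reduces to the standard description of null vectors in a Lorentzian plane.
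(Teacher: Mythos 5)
Your proposal is correct and follows exactly the route the paper takes (the paper's argument is the one-line observation preceding the corollary): $S^{\perp}_{p}$ has signature $(1,1)$, its light cone consists of two lines, one is $\Xi_{p}$, and $\Theta_{p}$ is declared to be the other; your explicit local computation simply spells out the smoothness and the normalization $g^{L}(V,Z)=1$ that the paper leaves implicit.
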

Using locally future pointing sections as well as Cor. \ref{complementary-distrib} and a partition of unity we conclude that the following are equivalent.
\begin{itemize}
	\item
	$\Xi$ admits a nowhere vanishing section,
	\item
	$(X,g^{L})$ is time-orientable,
	\item
	$\mathcal{X}^{\perp}$ is transversely orientable.
\end{itemize}
Since $\Xi$ is $\nabla^{L}$-parallel any global section is recurrent.\footnote{We say $V \in \Gamma(U,TX)$ is {\em recurrent} if
										$\nabla^{L}_{\cdot}{V} = \alpha_{U}(\cdot)V$ for some 1-form
										$\alpha_{U} \in \Gamma(U,T^{*}X)$.}
\begin{definition}
	Let $(X,g^{L})$ be a Lorentzian manifold and $V \in \Gamma(X,TX)$ a global nowhere vanishing lightlike vector field.
	We say $(X,g^{L},V)$ is an
	\begin{enumerate}
		\item
		almost decent spacetime if $\nabla^{L}_{\cdot}{V} = \alpha(\cdot)V$ for some 1-form $\alpha \in \Gamma(X,T^{*}X)$.
		\item
		decent spacetime if it is almost decent and $\alpha|_{\Xi^{\perp}}=0$.\qed
	\end{enumerate}
\end{definition}
For an almost decent spacetime we always assume that $V \in \Gamma(X,\Xi)$ is future pointing. Next, we characterize the class of almost decent
spacetimes in the class of Lorentzian manifolds. If $(X,g^{L})$ is an arbitrary Lorentzian manifold let $\liealg{hol}_{p}(X,g^{L})$ be its holonomy
algebra at $p \in X$. Then $\liealg{hol}_{p}(X,g^{L})$ has the Borel-Lichn\'erowicz property, i.e., there is an orthogonal decomposition
$T_{p}X=E_{0} \oplus \ldots \oplus E_{\ell}$ into non-degenerate $\liealg{hol}_{p}(X,g^{L})$-invariant subspaces and a corresponding decomposition
$\liealg{hol}_{p}(X,g^{L})=\liealg{h}_{1}\oplus \ldots \oplus \liealg{h}_{\ell}$ into commuting ideals such that each
$\liealg{h}_{j} \subset \liealg{so}(E_{j},g^{L}|_{E_{j}})$ acts weakly irreducibly on $E_{j}$ and trivially on $E_{i}$ for $i \neq j$. Using
\cite{MR1836778} we derive three possible cases:
\begin{enumerate}
	\item
	$E_{0}=0$ or $g^{L}|_{E_{0}}$ is positive definite and $\liealg{h}_{i}$ acts irreducibly for $i \geq 1$. In this case, we may assume that
	$g^{L}|_{E_{j}}$ is positive definite for $j \geq 2$. Hence, $\liealg{h}_{j}$ acts as an irreducible Riemannian holonomy representation for
	$j \geq 2$ and $\liealg{h}_{1} = \liealg{so}(1,n+1)$.
	\item
	$E_{0} \neq 0$ and $g^{L}|_{E_{0}}$ is negative definite or of Lorentzian signature. Thus, $g^{L}|_{E_{j}}$ is positive definite and
	$\liealg{h}_{j}$ acts as an irreducible Riemannian holonomy representation for $j \geq 1$.
	\item
	$E_{0}=0$ or $g^{L}|_{E_{0}}$ is positive definite, $\liealg{h}_{j}$ acts as an irreducible Riemannian holonomy representation for $j \geq 2$
	and $\liealg{h}_{1} \subset \liealg{so}(1,n+1)$ is weakly irreducible but not irreducible. In this case, $\liealg{h}_{1}$ leaves a degenerate
	subspace $W$ invariant.
\end{enumerate}
In the first case, $\liealg{hol}_{p}(X,g^{L})$ does not leave any lightlike line invariant. Hence, $(X,g^{L})$ is not almost decent. There is no
general statement for the second case. However, if $(X,g^{L})$ is given by the last case then it is almost decent if it is time-orientable. Let us
explain this fact. First, we have an $\liealg{h}_{1}$-invariant line $W \cap W^{\perp}$. If $v$ is a lightlike vector in $T_{p}X$ spanning the invariant
line then $Hol^{0}(X,g^{L}) \subset Stab(\R \cdot v) \subset SO_{0}(T_{p}X)$. If we identify $T_{p}X$ with $\R^{1,n+1}$ then it can be shown that
$Stab(\R \cdot v) \cong (\R^{*} \times SO(n)) \ltimes \R^{n}$. If we choose a basis $(v,e_{1},\ldots,e_{n},z)$ of $\R^{n+2}$
satisfying $g(e_{i},e_{j})=\delta_{ij}$, $g(v,z)=1$ and $g(v,v)=g(z,z)=0$ then the Lie algebra $(\R \oplus \liealg{so}(n)) \ltimes \R^{n}$ of
$(\R^{*} \times SO(n)) \ltimes \R^{n}$ is given by
\begin{equation*}
	\left\lbrace \begin{pmatrix} a & w^{T} & 0\\ 0 & A & -w\\ 0 & 0 & -a \end{pmatrix}
								: a \in \R,~A \in \liealg{so}(n),~w \in \R^{n} \right\rbrace.
\end{equation*}
\begin{lemma}
	For a Lorentzian manifold $(X,g^{L})$ whose Borel-Lichn\'erowicz decomposition is given by
	\begin{equation*}
		T_{p}X=E_{0} \oplus \ldots \oplus E_{\ell} \quad \text{and}
						\quad \liealg{hol}_{p}(X,g^{L}) = \liealg{h}_{1}\oplus \ldots \oplus \liealg{h}_{\ell}
	\end{equation*}
	with $E_{i}$ positive definite for $i \neq 1$ and $\liealg{h}_{1} \neq \liealg{so}(1,n+1)$ let $W \cap W^{\perp}$ be an isotropic
	$\liealg{h}_{1}$-invariant subspace. Then $W \cap W^{\perp} \subset E_{1}$ is invariant under the action of the full holonomy group
	$Hol(X,g^{L})$.
\end{lemma}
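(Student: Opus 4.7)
The plan is to exploit normality of $Hol^{0}(X,g^{L})$ in $Hol(X,g^{L})$ and reduce the statement to showing that $L := W \cap W^{\perp}$ is the \emph{unique} $Hol^{0}$-invariant isotropic line in $T_{p}X$. Since $Hol^{0}$ is the identity component of $Hol$, it is normal; hence for any $\gamma \in Hol(X,g^{L})$ the line $\gamma L$ is again isotropic (as $\gamma$ is an isometry) and $Hol^{0}$-invariant: for $h \in Hol^{0}$ and $v \in L$ one has $h \gamma v = \gamma (\gamma^{-1} h \gamma) v \in \gamma L$, because $\gamma^{-1} h \gamma \in Hol^{0}$ preserves $L$. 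Once the uniqueness is established, $\gamma L = L$ follows immediately.

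For the uniqueness, let $L' = \R v$ be any $Hol^{0}$-invariant isotropic line and decompose $v = v_{0} + v_{1} + \ldots + v_{\ell}$ along the Borel-Lichn\'erowicz splitting. Invariance yields a character $\chi \colon Hol^{0} \to \R^{*}$ with $h v = \chi(h) v$, hence $h v_{i} = \chi(h) v_{i}$ for every $i$. For $i \geq 2$ the factor $Hol^{0}|_{E_{i}}$ acts irreducibly on a space of dimension $\geq 2$ (a one-dimensional factor would force $\liealg{h}_{i} = 0$ and be absorbed into $E_{0}$), so it admits no invariant line and $v_{i} = 0$. Thus $v \in E_{0} \oplus E_{1}$.

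Next I would rule out a nontrivial $E_{0}$-contribution. If $v_{0} \neq 0$, then the trivial $Hol^{0}$-action on $E_{0}$ forces $\chi \equiv 1$, so $v_{1}$ is fixed by $Hol^{0}|_{E_{1}}$. The isotropy relation $|v_{0}|^{2} + |v_{1}|^{2} = 0$ together with $|v_{0}|^{2} > 0$ makes $v_{1}$ timelike, and the stabilizer of a timelike vector in $E_{1}$ sits inside $\liealg{so}((\R v_{1})^{\perp}) \cong \liealg{so}(n+1)$. But then $E_{1} = \R v_{1} \oplus (\R v_{1})^{\perp}$ would be a non-degenerate orthogonal $\liealg{h}_{1}$-invariant decomposition, contradicting the weak irreducibility of $\liealg{h}_{1}$ on $E_{1}$. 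Hence $v_{0} = 0$ and $L' \subset E_{1}$.

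Finally, within $E_{1}$ I would argue that $\liealg{h}_{1}$ admits a unique invariant isotropic line: if $L_{1} \neq L_{2}$ were two such lines, then, since the maximal totally null subspace in signature $(1,n+1)$ has dimension one, $L_{1}$ and $L_{2}$ could not be orthogonal, so $L_{1} \oplus L_{2}$ would be a non-degenerate plane of signature $(1,1)$ whose Riemannian orthogonal complement would give a non-trivial non-degenerate $\liealg{h}_{1}$-invariant splitting of $E_{1}$, again contradicting weak irreducibility. Therefore $L' = L$, finishing the proof. The most delicate step is handling the mixed case $v_{0}, v_{1} \neq 0$; the crucial input there is that weak irreducibility of $\liealg{h}_{1}$ on the Lorentzian factor $E_{1}$ prohibits a fixed timelike vector.
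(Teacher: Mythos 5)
Your proposal is correct and follows essentially the same route as the paper: the paper also works from $Hol(X,g^{L})\subset \operatorname{Norm}_{O(1,\dim X-1)}(Hol^{0}(X,g^{L}))$ (your normality argument) and then analyzes the components of the transported isotropic vector in the Borel--Lichn\'erowicz splitting, killing the $E_{i}$-parts for $i\geq 2$ by irreducibility and pinning down the $E_{1}$-part by weak irreducibility. The only cosmetic difference is that you exclude the $E_{0}$-component via a fixed timelike vector contradicting weak irreducibility of $\liealg{h}_{1}$, while the paper first shows the $E_{1}$-component lies in $\R\cdot v$ and then uses isotropy together with definiteness of $E_{0}$.
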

\begin{proof}
	The idea is to apply $Hol(X,g^{L}) \subset \mbox{Norm}_{O(1,\dim X -1)}(Hol^{0}(X,g^{L}))$.
	For $v \in W \cap W^{\perp}$ we have $\liealg{h}_{1}\cdot v \in \R \cdot v$. Let $H_{i}$ be the connected Lie subgroup of
	$Hol^{0}(X,g^{L})$ whose Lie algebra is $\liealg{h}_{i}$.
	For any $h \in Hol^{0}(X,g^{L})$ we have $h(v)=\alpha_{h} \cdot v$ since $H_{i}$ acts trivially on $E_{1}$ for $i \neq 1$. Therefore,
	$(g^{-1}hg)(g^{-1}v) = \alpha_{h} \cdot g^{-1}(v)$ for $g \in \mbox{Norm}_{O(1,\dim X -1)}(Hol^{0}(X,g^{L}))$ and $h \in Hol^{0}(X,g^{L})$.
	For $0 \leq i \leq \ell$ let $\tilde{v}_{i} \in E_{i}$ such that $g^{-1}(v) = \tilde{v}_{0} + \ldots + \tilde{v}_{\ell}$.
	Using $H_{i} \subset g^{-1}Hol^{0}(X,g^{L})g$ we derive
	$\R \cdot g^{-1}(v) \ni h \cdot g^{-1}(v) =\tilde{v}_{0} +\ldots +\tilde{v}_{i-1} +h\tilde{v}_{i} +\tilde{v}_{i+1} +\ldots +\tilde{v}_{\ell}$
	for all $h \in H_{i}$. Therefore, $h \tilde{v}_{i} \in \R \cdot \tilde{v}_{i}$ and for $i \geq 2$ we conclude $\tilde{v}_{i}=0$ since $H_{i}$
	acts irreducibly. Hence, $g^{-1}(v)=\tilde{v}_{0}+\tilde{v}_{1}$.\par
	On the other hand, we have $\R \cdot g^{-1}(v) \ni h \cdot g^{-1}(v) = \tilde{v}_{0}+ h\tilde{v}_{1}$ for all $h \in H_{1}$. Hence,
	$\tilde{v}_{1} \in \R \cdot v$ since $H_{1}$ acts weakly irreducibly and reducibly on $E_{1}$. If $\tilde{v}_{0} \neq 0$ we derive the
	contradiction $0 = \skal{g^{-1}(v)}{g^{-1}(v)} =\skal{\tilde{v}_{0}}{\tilde{v}_{0}}+2\skal{\tilde{v}_{0}}{\tilde{v}_{1}}
							+\skal{\tilde{v}_{1}}{\tilde{v}_{1}} = \skal{\tilde{v}_{0}}{\tilde{v}_{0}} \neq 0$
	since $E_{0}$ is definite. Therefore, $g^{-1}(v) \in \R \cdot v$ and $Hol(X,g^{L}) \cdot v \in \R \cdot v$.
\end{proof}
We conclude that $W \cap W^{\perp}$ corresponds to a $\nabla^{L}$-parallel lightlike subbundle $\Xi \subset TX$ of rank one. If $(X,g^{L})$ is
time-orientable\footnote{If $(X,g)$ is not time-orientable we may consider its 2-fold time-orientation cover. The global nowhere vanishing section
					$V \in \Gamma(X,\Xi)$ is recurrent but not necessarily parallel even if $\liealg{h}_{1}$ annihilates a vector.
					In fact, we derive a class in $H^{1}(X,\R)$ induced by
					$\pi_{1}(X) \twoheadrightarrow \Mod{Hol_{p}(\nabla^{L})}{Hol^{0}_{p}(\nabla^{L})} \rightarrow \R$ where the last
					morphism is induced by $pr_{\Xi_{p}} \circ Hol_{p}(\nabla^{L})|_{\Xi_{p}}$.}
we have a global section $V \in \Gamma(X,\Xi)$, i.e., $(X,g^{L},V)$ is almost decent.
\section{A Lorentzian - Riemannian Dictionary}
Let $(X,g^{L},V)$ be an almost decent spacetime and $S$ a realization of the screen bundle. Using Cor. \ref{complementary-distrib} we fix
$Z \in \Gamma(X,\Theta)$ and define the following Riemannian metric on $X$.
\begin{equation*}
	g^{R}(A,B) := \begin{cases}
				1 &\text{if}~A=B=V~\text{or}~A=B=Z,\\
				g^{L}(A,B) &\text{if}~A,B \in S,\\
				0 &\text{otherwise}.
			\end{cases}
\end{equation*}
Given a choice for $S$ and $V$ we say $g^{R}$ is the $(V,S)$-metric associated to $g^{L}$. We have $T\mathcal{X}^{\perp} = \Xi \oplus S$ and
$\Theta = (T\mathcal{X}^{\perp})^{\perp_{g^{R}}}$ where $(T\mathcal{X}^{\perp})^{\perp_{g^{R}}}$ is the normal bundle of $T\mathcal{X}^{\perp} \subset TX$
w.r.t. $g^{R}$.
\begin{definition}
	Let $(X,\mathcal{F})$ be a foliated manifold and $\Gamma(U,T\mathcal{F})$ the vector fields on $U \subset X$ tangent to $\mathcal{F}$.
	\begin{enumerate}
		\item
		A Riemannian metric $g^{R}$ on $X$ is bundle-like w.r.t. $\mathcal{F}$ if $(L_{V}g)(Y_{1},Y_{2})=0$ for any open
		subset $U \subset X$ and all $V \in \Gamma(U,T\mathcal{F})$, $Y_{\cdot} \in \Gamma(U,T\mathcal{F}^{\perp_{g^{R}}})$.
		\item
		We say $(X,\mathcal{F})$ is transversely parallelizable if there exists a global frame
		$(\bar{Y}_{1},\ldots,\bar{Y}_{\operatorname{co dim}\mathcal{F}})$ for $\Mod{TX}{T\mathcal{F}}$ and
		global sections $Y_{i} \in \Gamma(X,TX)$ such that $[Y_{i},T\mathcal{F}] \in T\mathcal{F}$ and
		$\bar{Y}_{i}=pr_{\Mod{TX}{T\mathcal{F}}}(Y_{i})$ for all $1 \leq i \leq \operatorname{co dim}\mathcal{F}$.\qed
	\end{enumerate}
\end{definition}
\begin{lemma}\label{parallel-bundle-like}
	Let $(X,g^{L},V)$ be an almost decent spacetime. For any realization of the screen bundle $S$ the following are equivalent.
	\begin{enumerate}
		\item
		The $(V,S)$-metric $g^{R}$ is bundle-like w.r.t. $\mathcal{X}^{\perp}$ and $(X,\mathcal{X}^{\perp})$ is transversely parallelizable,
		\item
		the 1-form $g^{L}(V,\cdot)$ defining $\mathcal{X}^{\perp}$ is closed and
		\item
		$(X,g^{L})$ is decent, i.e., $\alpha|_{\Xi^{\perp}}=0$.
	\end{enumerate}
\end{lemma}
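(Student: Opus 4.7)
The plan is to work with the Pfaffian 1-form $\eta := g^{L}(V,\cdot)$, whose kernel is precisely $T\mathcal{X}^{\perp} = \Xi^{\perp}$, and to first compute its exterior derivative. Using $\nabla^{L}V = \alpha \otimes V$ together with the torsion-freeness and metricity of $\nabla^{L}$, a one-line calculation yields
\begin{equation*}
    d\eta = \alpha \wedge \eta.
\end{equation*}
Since $\eta$ is nowhere zero, any 1-form annihilating the codimension-one bundle $\Xi^{\perp}$ is a functional multiple of $\eta$, so the chain $d\eta = 0 \Leftrightarrow \alpha \wedge \eta = 0 \Leftrightarrow \alpha = f\eta \Leftrightarrow \alpha|_{\Xi^{\perp}} = 0$ settles (2) $\Leftrightarrow$ (3).

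For (1) $\Leftrightarrow$ (3), I would first identify the normal bundle: by construction $\Theta$ is $g^{R}$-orthogonal to $\Xi \oplus S = T\mathcal{X}^{\perp}$, hence $(T\mathcal{X}^{\perp})^{\perp_{g^{R}}} = \Theta$, a rank-one bundle spanned by $Z$. The bundle-like condition therefore collapses to the single scalar equation $(L_{W}g^{R})(Z,Z) = 0$ for $W \in \Gamma(U, T\mathcal{X}^{\perp})$. Using $g^{R}(Z,Z) \equiv 1$, the Lie derivative formula turns this into $g^{R}([W,Z],Z) = 0$. The key observation, which I expect to be the main bookkeeping step, is that $g^{R}(\cdot, Z)$ and $\eta$ coincide as 1-forms on $TX$: both annihilate $\Xi \oplus S$ and both send $Z$ to $1$. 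Thus
\begin{equation*}
    g^{R}([W,Z], Z) = \eta([W,Z]) = -d\eta(W,Z) = -\alpha(W)
\end{equation*}
for $W \in \Xi^{\perp}$, so bundle-likeness is equivalent to $\alpha|_{\Xi^{\perp}} = 0$.

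Assuming (3), the same identity gives $\eta([Z,W]) = \alpha(W) = 0$ for every $W \in T\mathcal{X}^{\perp}$, so $Z$ itself is a foliate vector field and $\bar{Z}$ provides the required nowhere vanishing global frame for $TX/T\mathcal{X}^{\perp}$; transverse parallelizability is therefore a free consequence of (3) and need not be checked separately, while in the direction (1) $\Rightarrow$ (3) only bundle-likeness is used. The whole argument is driven by the single formula $d\eta = \alpha \wedge \eta$, and I anticipate no genuine obstacle beyond keeping straight the two metrics, the sign conventions in the Lie derivative, and the identification of the $g^{R}$-projection to $\Theta$ with contraction against $\eta$.
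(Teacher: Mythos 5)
Your proposal is correct and follows essentially the same route as the paper: the key identity $g^{R}(\cdot,Z)=g^{L}(V,\cdot)$, the reduction of bundle-likeness to $(L_{W}g^{R})(Z,Z)=2\alpha(W)$ for $W\in\Xi^{\perp}$, and the observation that under (3) the globally defined $Z$ satisfies $[Z,\Xi^{\perp}]\subset\Xi^{\perp}$, hence gives the transverse parallelism. Packaging everything through the single formula $d\eta=\alpha\wedge\eta$ is only a mild streamlining of the paper's direct computations (and cleanly settles (2)$\Leftrightarrow$(3), where the paper just says to run its equations backwards).
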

\begin{proof}
	Suppose $\alpha|_{\Xi^{\perp}}=0$. Let $V \in \Gamma(X,\Xi)$ and fix $Z \in \Gamma(X,\Theta)$. We have to show $(L_{W}g^{R})(Z,Z)=0$
	for all $W \in \Gamma(U,\Xi^{\perp})$. Using $g^{R}(\cdot,Z) = g^{L}(\cdot,V)$ we derive
	$g^{R}(\nabla^{L}_{W}{Z},Z)=g^{L}(\nabla^{L}_{W}{Z},V)=-g^{L}(Z,\nabla^{L}_{W}{V})=-\alpha(W)$ and
	\begin{align*}
		(L_{W}g^{R})(Z,Z) &= \underbrace{W(g^{R}(Z,Z))}_{=0} - 2g^{R}([W,Z],Z)
				= 2g^{R}(\underbrace{\nabla^{L}_{Z}{W}}_{\in \Xi^{\perp}},Z)-2g^{R}(\nabla^{L}_{W}{Z},Z)\\
				&= 2\alpha(W).
	\end{align*}
	Thus, $g^{R}$ is bundle-like w.r.t. $\mathcal{X}^{\perp}$. Moreover, $pr_{Z}([W,Z]):= g^{R}([W,Z],Z)Z = -\alpha(W)Z$ and $Z$ is globally
	defined, i.e., the foliation $\mathcal{X}^{\perp}$ is transversely parallelizable.
	For the last statement we compute
	\begin{align*}
		d(g^{L}(V,\cdot))(W,Z) &= g^{L}(\nabla^{L}_{W}{V},Z) - g^{L}(\nabla^{L}_{Z}{V},W)\\
					&= \alpha(W)g^{L}(V,Z) -\alpha(Z)g^{L}(V,W) = \alpha(W).
	\end{align*}
	For the converse we follow these equations backwards.
\end{proof}
\begin{lemma}\label{bundle-like-restrictions}
	Let $(X,g^{L},V)$ be an almost decent spacetime and $\mathcal{L}^{\perp}$ a leaf of $\Xi^{\perp}$. Let $S$ be any realization of the screen
	bundle.
	\begin{enumerate}
		\item
		The restriction $g^{R}|_{\mathcal{L}^{\perp}}$ of the $(V,S)$-metric is bundle-like w.r.t. the foliation
		$(\mathcal{L}^{\perp},\mathcal{X}|_{\mathcal{L}^{\perp}})$.\footnote{This fact seems to be well known and the first reference I
											could find is \cite{MR1677118}.}
		\item
		The $(V,S)$-metric is bundle-like w.r.t. the foliation $(X,\mathcal{X})$ if and only if
		$\alpha(V)=0$\footnote{The integral curves of $V$ are $g^{L}$-geodesics if and only if $\alpha(V)=0$ since
						$\nabla^{L}_{\cdot}V = \alpha(\cdot)V$.}
		and $[V,Z] \in \Gamma(X,\Xi)$.
	\end{enumerate}
\end{lemma}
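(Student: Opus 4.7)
The overall plan is to reduce both statements to direct computations of the Lie derivative $(L_V g^R)(Y_1,Y_2)$ for $V \in \Gamma(\Xi)$ and $Y_i$ running over sections of the $g^R$-orthogonal complement of $T\mathcal{X}$, and to convert these calculations into statements about the components of $\nabla^L_V Y_i$ in the decomposition $TX = \Xi \oplus S \oplus \Theta$. The two main tools are: (a) the defining formulas $g^R|_{S\times S}=g^L|_{S\times S}$, $g^R(V,V)=g^R(Z,Z)=1$, with all other basic pairings vanishing, and (b) $\nabla^L g^L = 0$ together with $\nabla^L_{\cdot}V=\alpha(\cdot)V$, from which one extracts the $\Theta$-component of $\nabla^L_V Y$ via $g^L(V,\nabla^L_V Y)=-\alpha(V)g^L(V,Y)$.

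For part (1), on a leaf $\mathcal{L}^\perp$ we have $T\mathcal{L}^\perp=\Xi\oplus S$, and the definition of $g^R$ makes $S$ the $g^R$-orthogonal complement of $\Xi$ inside $T\mathcal{L}^\perp$. So I only need to verify $(L_V g^R)(Y_1,Y_2)=0$ for $Y_1,Y_2\in\Gamma(S)$. I expand via $[V,Y_i]=\nabla^L_V Y_i-\alpha(Y_i)V$; the $\alpha(Y_i)V$-terms are killed by $g^R(V,S)=0$. Decomposing $\nabla^L_V Y_1=aV+s+bZ$, the only surviving contribution in both $g^L(\nabla^L_V Y_1,Y_2)$ and $g^R(\nabla^L_V Y_1,Y_2)$ is $g^L(s,Y_2)$, using that $S$ is $g^L$-orthogonal to both $\Xi$ and $\Theta$ (the latter by the construction of $\Theta\subset S^\perp$ in Cor.~\ref{complementary-distrib}). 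Hence $(L_V g^R)(Y_1,Y_2)=V(g^L(Y_1,Y_2))-g^L(\nabla^L_V Y_1,Y_2)-g^L(Y_1,\nabla^L_V Y_2)=0$ by $\nabla^L g^L=0$.

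For part (2), the $g^R$-orthogonal complement of $T\mathcal{X}=\Xi$ is $S\oplus\Theta$, so I must check vanishing of $(L_V g^R)$ on the three types of pairs: $(Y_1,Y_2)\in S\times S$ (already done in (1)); $(Y,Z)$ with $Y\in S$; and $(Z,Z)$. The computation already done in the proof of Lemma~\ref{parallel-bundle-like} (which did not use $\alpha|_{\Xi^\perp}=0$) shows $(L_V g^R)(Z,Z)=2\alpha(V)$, so this half forces $\alpha(V)=0$. For the mixed case, expanding $(L_V g^R)(Y,Z)$ and applying the same decomposition strategy to $\nabla^L_V Z=aV+s+bZ$ (where $b=-\alpha(V)$ by the metric identity) reduces it to $-g^L(Y,s)$, so vanishing for all $Y\in S$ is equivalent to the $S$-component of $\nabla^L_V Z$ being zero. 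Finally I read off the claimed condition: $[V,Z]=\nabla^L_V Z-\alpha(Z)V$, so $[V,Z]\in\Gamma(\Xi)$ precisely when $\nabla^L_V Z$ has no $S$- and no $\Theta$-component, which by the computation of $b$ is exactly the conjunction $\alpha(V)=0$ and $(\nabla^L_V Z)_S=0$.

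The only real obstacle is bookkeeping the passage between $g^L$ and $g^R$ in the $\Theta$-direction, since $g^R$ has been modified on $\Xi\oplus\Theta$; the uniform device that handles this is to decompose everything in the splitting $\Xi\oplus S\oplus\Theta$ and use that $\Theta\subset S^\perp$, so that $S$-pairings are identical for the two metrics and the $\Theta$-component of any $\nabla^L_V(\,\cdot\,)$ is computable from $\nabla^L g^L=0$ and the recurrence $\nabla^L_{\cdot}V=\alpha(\cdot)V$.
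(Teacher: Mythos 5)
Your proposal is correct and follows essentially the same route as the paper: both reduce the bundle-like conditions to computing $(L_{V}g^{R})$ on the pairs $S\times S$, $S\times\Theta$ and $\Theta\times\Theta$, using $\nabla^{L}g^{L}=0$ and $\nabla^{L}_{\cdot}V=\alpha(\cdot)V$, and both arrive at $(L_{V}g^{R})(Z,Z)=2\alpha(V)$ and $(L_{V}g^{R})(Y,Z)=-g^{L}([V,Z],Y)$ for $Y\in S$ before reading off the equivalence with $\alpha(V)=0$ and $[V,Z]\in\Gamma(X,\Xi)$. Your explicit bookkeeping in the splitting $\Xi\oplus S\oplus\Theta$ is just a spelled-out version of the paper's substitution $g^{R}(\cdot,Z)=g^{L}(\cdot,V)$, so there is nothing substantively different to compare.
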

\begin{proof}
	Since $g^{R}|_{S \times S} = g^{L}|_{S \times S}$ we have for any $Y_{1},Y_{2} \in \Gamma(U,S)$
	\begin{align*}
		(L_{V}g^{R})(Y_{1},Y_{2}) &= V(g^{L}(Y_{1},Y_{2})) -g^{L}([V,Y_{1}],Y_{2}) -g^{L}([V,Y_{2}],Y_{1})\\
					&= g^{L}(\nabla^{L}_{V}{Y_{1}},Y_{2}) + g^{L}(Y_{1},\nabla^{L}_{V}{Y_{2}})
							-g^{L}([V,Y_{1}],Y_{2}) -g^{L}([V,Y_{2}],Y_{1})\\
					&= g^{L}(\nabla^{L}_{Y_{1}}{V},Y_{2}) + g^{L}(Y_{1},\nabla^{L}_{Y_{2}}{V}) = 0
	\end{align*}
	For the second statement we need to show $(L_{V}g^{R})(Y_{1},Y_{2})=0$ for any $Y_{1},Y_{2} \in \Gamma(U,S \oplus \Theta)$. If $Y_{1}=Z$ and
	$Y_{2} \in \Gamma(U,S)$ we derive
	\begin{align*}
		(L_{V}g^{R})(Z,Y_{2}) &= V(g^{R}(Z,Y_{2})) -g^{R}([V,Z],Y_{2}) -g^{R}([V,Y_{2}],Z)\\
					&= -g^{L}([V,Z],Y_{2}) -g^{L}(\underbrace{[V,Y_{2}]}_{\in \Xi^{\perp}},V) = -g^{L}([V,Z],Y_{2}).
	\end{align*}
	For $Y_{1}=Y_{2}=Z$ we have $(L_{V}g^{R})(Z,Z) = V(g^{R}(Z,Z)) -2g^{R}([V,Z],Z) = 2\alpha(V)$. Since $g^{L}(\nabla^{L}_{V}Z,V) = -\alpha(V)$ we
	conclude $[V,Z] \in \Gamma(X,\Xi)$ if $\alpha(V)=0$ and $[V,Z] \in \Gamma(X,\Xi \oplus \Theta)$.
\end{proof}
For a foliated manifold $(X,\mathcal{F})$ let $\Mod{X}{\mathcal{F}}$ be its set of leaves and
\begin{equation*}
	\pi: X \rightarrow \Mod{X}{\mathcal{F}}, \qquad p \mapsto (\text{leaf through}~p).
\end{equation*}
For Riemannian foliations this map has been studied in \cite{MR0142130}, \cite{MR0120592}, \cite{MR637184} and
\cite{MR932463}. Given the results in \cite{MR0370617} and Lemma \ref{parallel-bundle-like} we have
\begin{corollary}[Conlon \cite{MR0370617}]\label{all-leaves-closed}
	For a decent spacetime $(X,g^{L},V)$ all leaves of $(X,\mathcal{X}^{\perp})$ have trivial leaf-holonomy. Suppose there is a realization
	of the screen bundle such that $Z$ is complete and let $\mathcal{L}^{\perp}$ be a leaf of $\mathcal{X}^{\perp}$.
	\begin{enumerate}
		\item
		If there is no leaf of $\mathcal{X}^{\perp}$ which is closed in $X$ then each leaf is dense in $X$.
		\item
		We have $\tilde{X} = \tilde{\mathcal{L}}^{\perp} \times \R$ where $\tilde{X}$, $\tilde{\mathcal{L}^{\perp}}$ denote the universal covers
		of $X$, $\mathcal{L}^{\perp}$.
		\item
		If there is a closed leaf then $X \rightarrow \Mod{X}{\mathcal{X}^{\perp}}$ is a smooth fiber bundle and
		$\Mod{X}{\mathcal{X}^{\perp}} \in \{\R,S^{1}\}$.
		\item
		The inclusion $\mathcal{L}^{\perp} \rightarrow X$ induces a monomorphism $\pi_{1}(\mathcal{L}^{\perp}) \rightarrow \pi_{1}(X)$ onto
		a normal subgroup. If $X$ is compact then $\Mod{\pi_{1}(X)}{\pi_{1}(\mathcal{L}^{\perp})}=\Z^{r}$ for some $r \geq 1$ and
		$r=1$ if and only if $\mathcal{L}^{\perp}$ is closed in $X$.\qed
	\end{enumerate}
\end{corollary}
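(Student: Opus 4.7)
The overall approach is to view $\mathcal{X}^{\perp}$, via Lemma~\ref{parallel-bundle-like}, as a transversely parallelizable Riemannian foliation of codimension one (bundle-like $(V,S)$-metric $g^{R}$, global transverse field $Z$ from Cor.~\ref{complementary-distrib}) and, after assuming completeness of $Z$, to appeal to the Molino--Conlon structure theory for such foliations. Trivial leaf-holonomy needs no completeness assumption: in a transversely parallelizable foliation the global transverse field $Z$ identifies transverse slices equivariantly along any loop in a leaf, so the germ of linear holonomy around the loop is the identity and the full leaf-holonomy pseudogroup is trivial.

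For (1) and the main content of (3), I invoke Conlon's closure theorem for transversely complete, transversely parallelizable Riemannian foliations: leaf closures form a singular Riemannian foliation by closed submanifolds whose leaf-closure space is Hausdorff. In codimension one, each leaf closure has codimension $0$ or $1$, hence is either all of $X$ or the leaf itself, and the local transitive action of the $Z$-flow on a transverse slice prevents the two cases from mixing. So either every leaf is dense, or every leaf is closed; in the latter case $\pi\colon X\to X/\mathcal{X}^{\perp}$ is a locally trivial $\mathcal{L}^{\perp}$-bundle whose base is a connected Hausdorff $1$-manifold, hence $\R$ or $S^{1}$.

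For (2), I lift $Z$ to $\widetilde Z$ on $\widetilde X$ and define $\widetilde\psi\colon\widetilde{\mathcal{L}}^{\perp}\times\R\to\widetilde X$ by $(p,t)\mapsto\phi_{t}^{\widetilde Z}(p)$. Transversality of $\widetilde Z$ to the lifted foliation makes $\widetilde\psi$ a local diffeomorphism, and passing to the universal cover straightens the $Z$-orbits (unwinding the $S^{1}$-base case) while making every lifted leaf closed, so each $\widetilde Z$-orbit meets each lifted leaf in exactly one point, giving surjectivity and injectivity simultaneously. For (4), the closed-leaf fibration of (3) together with $\pi_{1}(\R)=0$ or $\pi_{1}(S^{1})=\Z$ produces the short exact sequence $1\to\pi_{1}(\mathcal{L}^{\perp})\to\pi_{1}(X)\to\pi_{1}(X/\mathcal{X}^{\perp})\to 1$, settling the monomorphism, normality, and $r=1$ in the closed-leaf case. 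When $X$ is compact with only dense leaves, I would compute the quotient from Molino's basic fibration: in codimension one the structural Lie algebra is abelian and the basic manifold is a torus $T^{r}$, and the long exact sequence of the basic fibration yields the abelian quotient $\Z^{r}$ with $r\ge 2$.

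The principal obstacle is (2): showing $\widetilde\psi$ is a global diffeomorphism rather than merely a local one — i.e.\ that the lifted $\widetilde Z$-flow intersects each leaf of $\widetilde{\mathcal{X}}^{\perp}$ exactly once — requires a careful analysis of the lifted foliation that also underlies the identification of the abelian quotient $\Z^{r}$ in the compact dense-leaf case; getting these two parts right is the technical heart of the corollary, while the remaining items are essentially bookkeeping once the structure theorems are in hand.
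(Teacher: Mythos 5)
Your approach is essentially the paper's: the paper offers no independent argument for this corollary but derives it by combining Lemma \ref{parallel-bundle-like} (for a decent spacetime the $(V,S)$-metric is bundle-like and $(X,\mathcal{X}^{\perp})$ is transversely parallelizable, equivalently $g^{L}(V,\cdot)$ is closed) with Conlon's structure theory for transversely complete, transversely parallelizable codimension-one foliations, which is exactly what you do; your sketches of the holonomy triviality, the dense/closed dichotomy, the fibration over $\R$ or $S^{1}$, and the splitting of $\tilde{X}$ via the flow of the lifted $Z$ are the standard way these results of Conlon are proved.

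One detail in your item (4) is misstated, though the conclusion is part of what the cited structure theory gives: in the compact dense-leaf case the leaf closures are all of $X$, so Molino's basic fibration has a point (not a torus $T^{r}$) as base, and its exact sequence yields nothing. The correct mechanism is the period (Fedida holonomy) homomorphism of the closed $1$-form $\omega=g^{L}(V,\cdot)$: on $\tilde{X}$ one has $\tilde{\omega}=d\tilde{f}$, every deck transformation $\gamma$ satisfies $\tilde{f}\circ\gamma=\tilde{f}+c(\gamma)$ with $c\colon\pi_{1}(X)\rightarrow\R$ a homomorphism, $\ker c$ is precisely the stabilizer of a lifted leaf and is identified with $\pi_{1}(\mathcal{L}^{\perp})$ (normal, and injecting into $\pi_{1}(X)$ also in the dense and non-compact cases, which your fibration argument does not cover), and for compact $X$ the image $c(\pi_{1}(X))\subset\R$ is a nontrivial finitely generated subgroup, hence $\Z^{r}$ with $r\geq 1$, discrete ($r=1$) exactly when the leaves are closed and dense in $\R$ ($r\geq 2$) exactly when the leaves are dense. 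With that substitution your outline matches the intended proof.
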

A spacetime $(X,g^{L})$ is said to be distinguishing at $p \in X$ if for any neighborhood $U \ni p$ there is a neighborhood $V \subset U$ such that
$p \in V$ and any (piecewise smooth) causal curve $\gamma: [a,b] \rightarrow X$ with $\gamma(a)=p$ and $\gamma(b) \in V$ is contained in $V$. We say
$(X,g^{L})$ is a distinguishing spacetime if it is distinguishing for all $p \in X$. On the causality ladder (cf. \cite{MR2436235}) we have
\begin{equation*}
	\text{strongly causal} \Rightarrow \text{distinguishing} \Rightarrow \text{causal}.
\end{equation*}
\begin{proposition}\label{strong-causal-foliation}
	Let $(X,g^{L},V)$ be an almost decent spacetime.
	\begin{enumerate}
		\item
		If $(X,g^{L})$ is causal then the leaves of the foliated manifolds $(X,\mathcal{X})$ and
		$(\mathcal{L}^{\perp},\mathcal{X}|_{\mathcal{L}^{\perp}})$ have trivial leaf holonomy. Moreover, $X$ is not compact.
		\item
		If $(X,g^{L})$ is distinguishing at $p \in X$ then the leaf $\mathcal{L}^{\perp}$ of $\mathcal{X}^{\perp}$ through $p$ is not
		compact.
		\item
		If $(X,g^{L})$ is distinguishing then each leaf of $\mathcal{X}$ is a closed subset in $X$ and each leaf of
		$\mathcal{X}|_{\mathcal{L}^{\perp}}$ is a closed subset in $\mathcal{L}^{\perp}$.
	\end{enumerate}
\end{proposition}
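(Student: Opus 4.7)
The unifying idea is that integral curves of $V$ are lightlike, hence causal, so recurrent behaviour of the $V$-flow produces (almost) closed causal curves that the relevant causality hypothesis forbids. I would treat (1), (2), (3) in order of increasing delicacy.

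For (1) the leaves of $\mathcal{X}$ and $\mathcal{X}|_{\mathcal{L}^\perp}$ are orbits of the nowhere vanishing lightlike field $V$, hence diffeomorphic either to $\R$ or to $S^{1}$; an $S^{1}$-leaf would be a closed lightlike (and thus causal) curve, ruled out by causality. So every such leaf is a copy of $\R$ and in particular has trivial leaf holonomy. For non-compactness of $X$, suppose $X$ is compact. Then the $V$-flow is complete, and Birkhoff's recurrence theorem applied to the compact $V$-invariant set $\overline{\gamma_{p}}$ yields a recurrent point $q$ together with times $t_{n}\to\infty$ satisfying $\phi_{t_{n}}(q)\to q$. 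A limit-curve argument applied to the null arcs $\gamma_{q}|_{[0,t_{n}]}$ then produces a nontrivial closed causal curve based at $q$, contradicting causality.

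For (2), suppose $\mathcal{L}^{\perp}\ni p$ is compact. Lemma~\ref{bundle-like-restrictions}(1) says $g^{R}|_{\mathcal{L}^{\perp}}$ is bundle-like with respect to $\mathcal{X}|_{\mathcal{L}^{\perp}}$, so this is a one-dimensional Riemannian foliation on a compact manifold. By Molino's structure theorem, the closure $\overline{\mathcal{L}_{p}}$ of the $V$-orbit through $p$ is a compact submanifold of $\mathcal{L}^{\perp}$ in which $\mathcal{L}_{p}$ lies densely; in particular there are $t_{n}\to\infty$ with $\phi_{t_{n}}(p)\to p$. Pick a neighbourhood $V_{0}\ni p$ small enough that $\overline{\mathcal{L}_{p}}\setminus V_{0}\neq\emptyset$, which is possible because $V(p)\neq 0$ forces $\dim\overline{\mathcal{L}_{p}}\geq 1$. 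Then for large $n$ the causal curve $\gamma_{p}|_{[0,t_{n}]}$ escapes $V_{0}$ yet its endpoint $\phi_{t_{n}}(p)$ lies in $V_{0}$, directly contradicting future distinguishing at $p$.

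For (3), suppose a leaf $\mathcal{L}$ of $\mathcal{X}$ is not closed in $X$ and choose $q\in\overline{\mathcal{L}}\setminus\mathcal{L}$ with $r_{n}:=\gamma(t_{n})\to q$ and $|t_{n}|\to\infty$. Continuous dependence of the flow makes $\gamma_{r_{n}}:=\phi_{(\cdot)}(r_{n})$ converge uniformly on compact time intervals to $\gamma_{q}$. Fix a small distinguishing neighbourhood $V_{q}$ of $q$; since $V(q)\neq 0$, $\gamma_{q}$ exits $V_{q}$ at some first time $T_{q}>0$, and because $\mathcal{L}$ accumulates at $q$ each $\gamma_{r_{n}}$ also returns to $V_{q}$ at arbitrarily large times. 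This produces causal arcs whose endpoints converge to $q$ but which each escape $V_{q}$; a limit-curve argument extracts a nontrivial closed causal curve based at $q$, contradicting causality (and hence distinguishing). The second statement in (3) is then immediate: every leaf of $\mathcal{X}|_{\mathcal{L}^{\perp}}$ is a leaf $\mathcal{L}$ of $\mathcal{X}$ contained in $\mathcal{L}^{\perp}$, and if such an $\mathcal{L}$ is closed in $X$ then $\mathcal{L}=\mathcal{L}\cap\mathcal{L}^{\perp}$ is closed in $\mathcal{L}^{\perp}$.

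The main obstacle I anticipate is the limit-curve step in (1) and (3). In a strongly causal spacetime this is the classical Limit Curve Lemma, but here only causality (respectively distinguishing) is available; the limit must be obtained by exploiting the specific null-orbit structure, e.g.\ by reparametrizing the arcs by $g^{R}$-arclength to get uniform equicontinuity, extracting a $C^{0}$ limit on compact intervals, and verifying its causality directly from the tangent vectors converging to $V$.
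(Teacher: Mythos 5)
Your treatment of the holonomy claim in (1) and of (2) follows essentially the paper's route (a one-dimensional Riemannian flow on the compact leaf, recurrence of the orbit through $p$, contradiction with distinguishing at $p$), but the step you yourself flag --- ``a limit-curve argument extracts a nontrivial closed causal curve'' --- is a genuine gap, and you use it twice: for the non-compactness claim in (1) and for the conclusion of (3). Under mere causality (or distinguishing) almost-closed causal curves do not close up in the limit: the $C^{0}$-limit of the arcs $\gamma_{q}|_{[0,t_{n}]}$, whose endpoints merely return near $q$, is the (inextendible) $V$-orbit through $q$, not a loop. The existence of exactly such sequences is what separates causal from strongly causal spacetimes, so no argument of this shape can manufacture a closed causal curve. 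For the non-compactness in (1) the fact to invoke is the classical one, which the paper leaves implicit: a compact spacetime contains a closed \emph{timelike} curve (cover $X$ by the open sets $I^{+}(q)$, take a finite subcover and chase the resulting cycle), hence a compact $X$ is not causal; Birkhoff recurrence plus a limit curve does not deliver this. For (3) no closed curve is needed at all: the paper works in a Walker chart $\tilde U$ around the accumulation point $q$, where the plaques of $\mathcal{L}$ are $x$-lines; the plaque through a point $p_{U}\in U$ near $q$ runs out of $U$, and since $\mathcal{L}$ accumulates at $q$ there is a second point $\tilde p_{U}\in\mathcal{L}\cap U$ on a different plaque, so the leaf segment from $p_{U}$ to $\tilde p_{U}$ is a lightlike curve joining two points of $U$ that leaves $U$ --- and this configuration itself is the violation of the distinguishing property at $q$. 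Your arcs from $r_{n}$ back into $V_{q}$ accomplish the same thing; you should stop there and read the contradiction off those arcs directly (note they contradict the distinguishing/strong-causality condition at $q$, which is the hypothesis, not causality), rather than trying to close them up.

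Two smaller points. In (2), density of the orbit in its closure does not by itself give forward recurrence $\phi_{t_{n}}(p)\to p$ with $t_{n}\to+\infty$: what makes this work is Carri\`ere's theorem (the reference the paper uses) that for a Riemannian flow on a compact manifold the closure of a non-closed leaf is a torus on which the flow is conjugate to a dense linear flow; the paper treats the closed-leaf case separately, where one gets a closed lightlike curve contradicting causality, and your unified version is fine once this is cited. The final reduction in (3) --- closedness in $X$ implies closedness in $\mathcal{L}^{\perp}$ --- is correct, as the preimage of a closed set under the continuous injection $\mathcal{L}^{\perp}\to X$, which is how the paper phrases it.
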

\begin{proof}
	Any curve in a leaf $\mathcal{L}$ of $\mathcal{X}$ is lightlike, i.e., $\pi_{1}(\mathcal{L})=0$ since $(X,g^{L})$ is causal.\par
	Suppose $\mathcal{L}^{\perp}$ is compact. There is a bundle-like Riemannian metric on the compact foliated manifold
	$(\mathcal{L}^{\perp},\mathcal{X}|_{\mathcal{L}^{\perp}})$. Consider the leaf $\mathcal{L} \subset \mathcal{L}^{\perp}$ of
	$\mathcal{X}|_{\mathcal{L}^{\perp}}$ through $p$. If $\mathcal{L}$ is closed in $\mathcal{L}^{\perp}$ then it is compact, i.e., we have a
	closed lightlike curve through $p$. In this case $(X,g^{L})$ would not be causal at $p$. On the other hand, if $\mathcal{L}$ is not closed
	in $\mathcal{L}^{\perp}$ then $\bar{\mathcal{L}} \subset \mathcal{L}^{\perp}$ is diffeomorphic to a torus in $\mathcal{L}^{\perp}$
	by \cite{MR744829} and $(X,g^{L})$ is not distinguishing at $p$.\par
	Let $\mathcal{L} \subset \mathcal{L}^{\perp}$ be a leaf of $\mathcal{X}$. Suppose we have $q \in \bar{\mathcal{L}} \setminus \mathcal{L}$
	where the closure is taken w.r.t. $X$. For any $X$-open neighborhood $U \ni q$ we can find $p_{U} \in \mathcal{L} \cap U$. In particular,
	we may choose $U$ to be a coordinate neighborhood ball such that $\bar{U} \subset \tilde{U}$ where $\tilde{U}$ is a Walker coordinate
	neighborhood, i.e., $g^{L}= 2dxdz+ u_{\alpha}dy^{\alpha}dz + fdz^{2} + g_{\alpha\beta}dy^{\alpha}dy^{\beta}$ in $\tilde{U}$ and
	$V \in \mbox{span}\{\partial_{x}\}$. In these coordinates we have $p_{U}=(x_{0},y^{1}_{0}.\ldots,y^{n}_{0},z_{0})$
	and a curve segment $\gamma:[0,b] \rightarrow U$ with $t \mapsto (t+x_{0},y^{1}_{0},\ldots,y^{n}_{0},z_{0})$. Thus, $\dot{\gamma} \in \Xi$
	implies $\gamma([0,b]) \subset \mathcal{L}$ and since $\bar{U} \subset \tilde{U}$ we may assume $\gamma(b) \notin U$. Finally, we can find
	$\tilde{p}_{U} \in \mathcal{L} \cap U$ such that $\tilde{p}_{U} \notin \{(\cdot,y^{1}_{0},\ldots,y^{n}_{0},z_{0})\}$ and since
	$\mathcal{L}$ is connected there is a curve $\tilde{\gamma}$ in $\mathcal{L}$ connecting $\gamma(b)$ and $\tilde{p}_{U}$.
	Therefore, we have a (piecewise smooth) lightlike curve from $p_{U}$ to $\tilde{p}_{U}$ which leaves $U$ and $(X,g^{L})$ is not
	distinguishing at $q$.
	Hence, $\mathcal{L}$ is closed in $X$ and being the preimage of a closed set under $\mathcal{L}^{\perp} \rightarrow X$ it is closed
	in $\mathcal{L}^{\perp}$.
\end{proof}
Consider a leaf $\mathcal{L}^{\perp}$ in a distinguishing almost decent spacetime. By Prop. \ref{strong-causal-foliation} all leaves of the
foliation $\mathcal{X}|_{\mathcal{L}^{\perp}}$ are closed with vanishing fundamental group. Hence,
$\mathcal{L}^{\perp}\rightarrow \Mod{\mathcal{L}^{\perp}}{\mathcal{X}}$ is a submersion if $\Mod{\mathcal{L}^{\perp}}{\mathcal{X}}$ is Hausdorff
\cite{MR1453120}. Since $g^{R}|_{\mathcal{L}^{\perp}}$ is bundle-like a sufficient condition is that all segments $\gamma$ of horizontal geodesics, i.e.,
$\dot{\gamma}(0) \in S|_{\mathcal{L}^{\perp}}$, can be indefinitely extended (cf. \cite{MR0142130}). However, for
$Y_{\cdot} \in \Gamma(U,S)$ the Koszul formula implies $g^{R}(\nabla^{R}_{Y_{1}}{Y_{2}},Y_{3}) = g^{L}(\nabla^{L}_{Y_{1}}{Y_{2}},Y_{3})$. Hence, a curve
$\gamma$ tangent to $S$ is a horizontal geodesic if $\nabla^{S}_{\dot{\gamma}}{\dot{\gamma}}=0$.
\begin{example}\label{bazaikin-example}
	Let $(M,\tilde{g})$ be a simply connected compact Riemannian manifold and $f \in C^{\infty}(M)$. For $\varepsilon >0$ and $L \in \{\R, S^{1}\}$
	define $X:= S^{1} \times L \times M$ and
	\begin{equation*}
		g^{L}_{\varepsilon} := 2dxdz + \varepsilon fdz^{2} + \tilde{g}
	\end{equation*}
	where $dx$ and $dz$ are the standard coordinate 1-forms on $S^{1} \times L$. If $f \in C^{\infty}(M)$ is suitable then $(X,g^{L}_{\varepsilon})$
	is weakly irreducible where $\partial_{x}$ is $\nabla^{g^{L}_{\varepsilon}}$-parallel. Moreover, the leaves of $(X,\mathcal{X}^{\perp})$
	are compact and the universal cover of $(X,g^{L}_{\varepsilon})$ is globally hyperbolic if $\varepsilon$ is sufficiently small.
\end{example}
\begin{proof}
	Each leaf of $\mathcal{X}^{\perp}$ is diffeomorphic to $S^{1} \times M$ and the universal cover of $X$ is given by $\R^{2} \times M$.
	The pullback of $g^{L}_{\varepsilon}$ to $\R^{2} \times M$ is of the form $2dxdz + \varepsilon fdz^{2} + g$ where $x$ and $z$ are
	the standard coordinates on $\R^{2}$. Bazaikin has shown in \cite[Thm. 2]{bazaikin-2009} that this metric is globally hyperbolic if
	$\varepsilon$ is sufficiently small.
\end{proof}
For a Riemannian foliation $(X,\mathcal{F})$ with a bundle-like metric $g^{R}$ the {\em transverse Levi-Civita connection} $\nabla^{T}$ on
$(T\mathcal{F})^{\perp}$ is given by
\begin{equation*}
	\nabla^{T}_{X}{Y} = \begin{cases}
				\pi_{(T\mathcal{F})^{\perp}}(\nabla^{g^{R}}_{X}{Y}) & X \in (T\mathcal{F})^{\perp},\\
				\pi_{(T\mathcal{F})^{\perp}}([X,Y]) & X \in T\mathcal{F},
	                    \end{cases}
\end{equation*}
where $Y \in \Gamma(U,(T\mathcal{F})^{\perp})$. Consider the foliation $(\mathcal{L}^{\perp},\mathcal{X}|_{\mathcal{L}^{\perp}})$ with the $(V,S)$-metric
$g^{R}|_{\mathcal{L}^{\perp}}$. Any local section $\tilde{V} \in \Gamma(U,\Xi)$ is given by $\tilde{V}=fV$. Hence,
\begin{equation*}
	\nabla^{T}_{\tilde{V}}{Y} = \pi_{(T\mathcal{F})^{\perp}}(\nabla^{L}_{\tilde{V}}{Y})
						-\pi_{(T\mathcal{F})^{\perp}}(\underbrace{\nabla^{L}_{Y}{fV}}_{\in \Xi})
				= \pi_{(T\mathcal{F})^{\perp}}(\nabla^{L}_{\tilde{V}}{Y}) = \nabla^{S}_{\tilde{V}}{Y}
\end{equation*}
and using $g^{R}(\nabla^{R}_{Y_{1}}{Y_{2}},Y_{3}) = g^{L}(\nabla^{L}_{Y_{1}}{Y_{2}},Y_{3})$ we conclude
\begin{corollary}\label{transverse-levi-civita}
	Let $(X,g^{L},V)$ be an almost decent spacetime and $\mathcal{L}^{\perp}$ a leaf of $\mathcal{X}^{\perp}$. For any realization $S$ of the
	screen bundle the transverse Levi-Civita connection coincides with $\nabla^{S}|_{\mathcal{L}^{\perp}}$.\qed
\end{corollary}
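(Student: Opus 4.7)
The plan is to verify the equality $\nabla^{T}=\nabla^{S}|_{\mathcal{L}^{\perp}}$ case by case, following the two-branch definition of the transverse Levi-Civita connection. The first observation is that on the leaf $\mathcal{L}^{\perp}$ the foliation $\mathcal{X}|_{\mathcal{L}^{\perp}}$ has tangent bundle $\Xi$, and by construction of the $(V,S)$-metric we have $g^{R}(V,Y)=0$ for every $Y\in\Gamma(U,S)$ while $g^{R}$ is positive definite. Hence $S|_{\mathcal{L}^{\perp}}$ is exactly the $g^{R}|_{\mathcal{L}^{\perp}}$-normal bundle of $\mathcal{X}|_{\mathcal{L}^{\perp}}$, and the projection $\pi_{(T\mathcal{F})^{\perp}}$ appearing in the definition is $pr_{S}$.

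For the ``leaf-direction'' case $X\in\Gamma(U,\Xi)$, write $X=\tilde{V}=fV$ and let $Y\in\Gamma(U,S)$. The display immediately preceding the corollary has already established $\pi_{S}(\nabla^{L}_{\tilde{V}}Y)=\nabla^{S}_{\tilde{V}}Y$, using that $\nabla^{L}_{Y}(fV)$ lies in $\Xi$ by recurrence of $V$. To match this with the formula $\nabla^{T}_{\tilde{V}}Y=\pi_{S}([\tilde{V},Y])$, I would just note $[\tilde{V},Y]=\nabla^{L}_{\tilde{V}}Y-\nabla^{L}_{Y}\tilde{V}$ and that $\nabla^{L}_{Y}\tilde{V}\in\Xi$ is killed by $pr_{S}$, giving $\pi_{S}([\tilde{V},Y])=\pi_{S}(\nabla^{L}_{\tilde{V}}Y)=\nabla^{S}_{\tilde{V}}Y$ as desired.

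For the ``horizontal'' case $X=Y_{1}\in\Gamma(U,S)$ and $Y=Y_{2}\in\Gamma(U,S)$, I would invoke the Koszul identity already used above, namely $g^{R}(\nabla^{R}_{Y_{1}}Y_{2},Y_{3})=g^{L}(\nabla^{L}_{Y_{1}}Y_{2},Y_{3})$ for any $Y_{3}\in\Gamma(U,S)$. This follows from the observation that on $S\times S$ the two metrics agree, and that along any $Y_{i}\in\Gamma(U,S)$ bracket terms in Koszul involving $V$ or $Z$ cancel once we pair with $Y_{3}\in S$ (the mixed components of $g^{R}$ and $g^{L}$ relating $\Xi\oplus\Theta$ to $S$ both vanish). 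Projecting onto $S$ and using that $g^{R}|_{S\times S}=g^{L}|_{S\times S}$ is non-degenerate, one obtains $\pi_{S}(\nabla^{R}_{Y_{1}}Y_{2})=\pi_{S}(\nabla^{L}_{Y_{1}}Y_{2})=\nabla^{S}_{Y_{1}}Y_{2}$, which is the required identity $\nabla^{T}_{Y_{1}}Y_{2}=\nabla^{S}_{Y_{1}}Y_{2}$.

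There is no real obstacle here: the corollary is essentially a bookkeeping consequence of two facts set up on the preceding page, the identification $pr_{S}\circ\nabla^{L}|_{S}=\nabla^{S}$ and the Koszul comparison of $\nabla^{L}$ and $\nabla^{R}$ on the screen bundle. The only subtle point worth checking carefully is that both branches of the piecewise definition of $\nabla^{T}$ (the one using $\nabla^{g^{R}}$ and the one using the bracket) give the same answer here, and this is handled by the recurrence property $\nabla^{L}_{\cdot}V=\alpha(\cdot)V$ which forces the extra terms to lie in $\Xi$ and hence be annihilated by $pr_{S}$.
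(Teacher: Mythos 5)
Your argument is correct and is essentially the paper's own: the corollary is obtained there by combining the displayed computation $\nabla^{T}_{\tilde{V}}Y=\pi(\,[\tilde{V},Y]\,)=\pi(\nabla^{L}_{\tilde{V}}Y)=\nabla^{S}_{\tilde{V}}Y$ (using $\nabla^{L}_{Y}(fV)\in\Xi$) with the Koszul identity $g^{R}(\nabla^{R}_{Y_{1}}Y_{2},Y_{3})=g^{L}(\nabla^{L}_{Y_{1}}Y_{2},Y_{3})$ for sections of $S$, exactly the two cases you check. Your explicit identification of $S|_{\mathcal{L}^{\perp}}$ as the $g^{R}|_{\mathcal{L}^{\perp}}$-normal bundle of the flow is a small bookkeeping point the paper leaves implicit, but the route is the same.
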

\begin{definition}
	Let $(X,g^{L},V)$ be an almost decent spacetime. If $S$ is a realization of the screen bundle we say
	$(X,g^{L},V,S)$ is
	\begin{itemize}
		\item
		almost horizontal if $\alpha(Y)=g^{L}(Z,\nabla^{L}_{V}{Y})$ or equivalently $[V,Y] \in S$ for any local section $Y \in \Gamma(U,S)$,
		\item
		horizontal if it is almost horizontal and decent.\qed
	\end{itemize}
\end{definition}
Hence, $\nabla^{L}_{V}{Y} \in \Gamma(U,S)$ for any section $Y \in \Gamma(U,S)$ if $(X,g^{L},V,S)$ is horizontal. In particular, $d(g^{L}(Z,\cdot))(V,\cdot)|_{\Xi^{\perp}}=-g^{L}(Z,[V,\cdot])|_{\Xi^{\perp}}=0$ if and only if $(X,g^{L},V,S)$ is almost horizontal.
\begin{lemma}\label{horizontal-isometric-flow}
	Let $(X,g^{L},V)$ be an almost decent spacetime. If $S$ is a realization of the screen bundle then
	\begin{enumerate}
		\item
		$(X,g^{L},V,S)$ is almost horizontal if and only if for any leaf $\mathcal{L}^{\perp}$ of $\mathcal{X}^{\perp}$ the restriction of
		$g^{R}|_{\mathcal{L}^{\perp}}$ of the $(V,S)$-metric defines the structure of an isometric Riemannian flow on
		$(\mathcal{L}^{\perp},\mathcal{X}|_{\mathcal{L}^{\perp}})$, i.e., $L_{V}g^{R}(W_{1},W_{2})=0$ for all $W_{1},W_{2} \in \Xi^{\perp}$
		and $V$ is a $g^{R}|_{\mathcal{L}^{\perp}}$-Killing vector field of constant length.
		\item
		The $(V,S)$-metric is bundle-like w.r.t. the foliation $(X,\mathcal{X})$ and $\alpha|_{S}=0$ if and only if $(X,g^{L},V,S)$ is horizontal.
		\item
		The $(V,S)$-metric $g^{R}$ defines the structure of an isometric Riemannian flow on $(X,\mathcal{X})$ and $\alpha|_{S}=0$ if and only
		if $(X,g^{L},V,S)$ is horizontal and $\alpha =0$.
	\end{enumerate}
\end{lemma}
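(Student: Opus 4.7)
The plan is to evaluate $(L_{V}g^{R})(A,B)$ on the three-block decomposition $TX = \Xi \oplus S \oplus \Theta$ (using $\Theta$ from Cor.~\ref{complementary-distrib}), check which blocks are already controlled, and match the remaining blocks with the stated hypotheses. As preparation I would record once, using $\nabla^{L}g^{L}=0$ together with $\nabla^{L}_{\cdot}V = \alpha(\cdot)V$, the following pointwise identities in the local frame $(V,Z,\text{basis of }S)$: for $Y \in \Gamma(U,S)$ the $\Theta$-coefficient of $\nabla^{L}_{V}Y$ vanishes and its $\Xi$-coefficient equals $g^{L}(\nabla^{L}_{V}Y,Z) = -g^{L}(Y,\nabla^{L}_{V}Z)$; for $\nabla^{L}_{V}Z$ the $\Theta$-coefficient equals $-\alpha(V)$ and the $\Xi$-coefficient vanishes (using $g^{L}(Z,Z)=0$). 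In particular, decency forces $\nabla^{L}_{V}Z \in \Gamma(U,S)$.

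For part (1) the $S \times S$ block of $L_{V}g^{R}$ vanishes already by Lemma~\ref{bundle-like-restrictions}(1), and the $V \times V$ block is trivial since $g^{R}(V,V)\equiv 1$. The only remaining block on $\Xi^{\perp} \times \Xi^{\perp}$ is $(L_{V}g^{R})(V,Y)$ for $Y \in \Gamma(U,S)$, and expanding via $[V,Y]=\nabla^{L}_{V}Y - \alpha(Y)V$ and the preparatory identity gives
\begin{equation*}
(L_{V}g^{R})(V,Y) = -g^{R}(V,[V,Y]) = \alpha(Y) - g^{R}(V,\nabla^{L}_{V}Y) = \alpha(Y) - g^{L}(Z,\nabla^{L}_{V}Y),
\end{equation*}
whose vanishing is literally the definition of almost horizontality. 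Constant length of $V$ is automatic from $g^{R}(V,V)\equiv 1$.

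For part (2) I would invoke Lemma~\ref{bundle-like-restrictions}(2) to trade the bundle-like hypothesis for $\alpha(V)=0$ and $[V,Z] \in \Xi$; combined with $\alpha|_{S}=0$ this yields decency together with $[V,Z] \in \Xi$. Under decency the preparatory step gives $\nabla^{L}_{V}Z \in S$, so $[V,Z] = \nabla^{L}_{V}Z - \alpha(Z)V \in \Xi$ iff $\nabla^{L}_{V}Z = 0$; the adjointness $g^{L}(\nabla^{L}_{V}Y,Z) = -g^{L}(Y,\nabla^{L}_{V}Z)$ and positive-definiteness of $g^{L}$ on $S$ then convert this into $\nabla^{L}_{V}Y \in S$ for every $Y \in S$, which under $\alpha|_{S}=0$ is exactly almost horizontality.

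For part (3) the isometric Riemannian flow on $(X,\mathcal{X})$ strengthens part (2) by requiring $L_{V}g^{R}=0$ on all of $TX \times TX$. Parts (1) and (2) cover $\Xi^{\perp} \times \Xi^{\perp}$ and $(\Theta \oplus S) \times (\Theta \oplus S)$, so the only new component is $(L_{V}g^{R})(V,Z)$; under horizontality $\nabla^{L}_{V}Z = 0$, hence $[V,Z] = -\alpha(Z)V$ and $(L_{V}g^{R})(V,Z) = \alpha(Z)$. Combined with decency, $\alpha(Z)=0$ forces $\alpha \equiv 0$ on the full splitting $\Xi^{\perp} \oplus \Theta = TX$. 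The main obstacle throughout is purely organizational bookkeeping — matching each entry of $L_{V}g^{R}$ to exactly one hypothesis and reusing the decomposition of $\nabla^{L}_{V}Z$ consistently — rather than any genuine geometric difficulty.
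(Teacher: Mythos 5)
Your proposal is correct and follows essentially the same route as the paper: reduce to Lemma \ref{bundle-like-restrictions} for the $S\times S$ block and the bundle-like characterization, compute the remaining components $(L_{V}g^{R})(V,Y)$ and $(L_{V}g^{R})(V,Z)=\alpha(Z)$, and use the adjointness $g^{L}(\nabla^{L}_{V}Z,Y)=-g^{L}(Z,\nabla^{L}_{V}Y)$ to identify $[V,Z]\in\Xi$ with almost horizontality under decency. You merely make explicit some directions (notably the converse in part (2)) that the paper leaves to the reader.
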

\begin{proof}
	Lemma \ref{bundle-like-restrictions} implies $L_{V}g^{R}(W_{1},W_{2})=0$ for all $W_{1},W_{2} \in S$ and the first equivalence follows from
	\begin{equation*}
		L_{V}g^{R}(V,W_{2}) =V(g^{R}(V,W_{2})) -g^{R}([V,V],W_{2}) -g^{R}([V,W_{2}],V) = -g^{L}([V,W_{2}],Z).
	\end{equation*}
	If $(X,g^{L},V,S)$ is horizontal we have a local orthonormal frame $(Y_{1},\ldots,Y_{\dim S})$ for $S$
	such that $[V,Y_{i}] \in S$. Thus, $g^{L}(\nabla^{L}_{V}{Z},Y_{i})=-g^{L}(Z,\nabla^{L}_{V}{Y_{i}})=0$ and
	$pr_{S}([V,Z])= pr_{S}(\nabla^{L}_{V}{Z})$ imply $[V,Z] \in \Gamma(X,\Xi)$. This implies the second equivalence by
	Lemma \ref{bundle-like-restrictions}. For the last statement we consider $L_{V}g^{R}(V,Z)=-g^{R}([V,Z],V)=\alpha(Z)$.%
\end{proof}
\begin{proposition}
	Let $(X,g^{L},V,S)$ be a horizontal spacetime. If $f \in C^{\infty}(X)$ is $(X,\mathcal{X})$-basic, i.e., $V(f)=0$, then $(X,g^{f})$ is a
	horizontal spacetime where the transverse conformal change $g^{f}$ of $g^{L}$ by $f$ is defined by
	\begin{equation*}
		g^{f} := \begin{cases}
				g^{f}|_{S \times S} = e^{f}g^{L}|_{S \times S},\\
				g^{f}(V,V) = g^{f}(Z,Z) = g^{f}(V,S) = g^{f}(Z,S) =0,\\
				g^{f}(V,Z) =1.
		         \end{cases}
	\end{equation*}
\end{proposition}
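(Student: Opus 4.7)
The strategy is to show that the Levi-Civita connection of $g^{f}$ satisfies $\nabla^{g^{f}} V = \alpha \otimes V$ with the \emph{same} 1-form $\alpha$ appearing in the almost decent condition for $(X,g^{L},V)$; the remaining conditions will either be metric-independent or depend only on $\alpha$ and $\Xi^{\perp}$. First I would record two elementary identities, immediate from the definition of $g^{f}$: as 1-forms on $TX$ we have $g^{f}(\cdot,V)=g^{L}(\cdot,V)$ and $g^{f}(\cdot,Z)=g^{L}(\cdot,Z)$. Consequently the orthogonal subbundle $\Xi^{\perp_{g^{f}}}$ coincides with $\Xi^{\perp_{g^{L}}}$, the line $\Xi$ is still lightlike for $g^{f}$, and a signature count in the basis $(V,Z,Y_{1},\ldots,Y_{n})$ adapted to $S$ confirms that $g^{f}$ is Lorentzian.

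The main computation applies the Koszul formula to determine $\nabla^{g^{f}}_{A}V$ for $A \in \Gamma(U,TX)$. Decomposing this vector along $V$, $Z$, and a local $g^{L}$-orthonormal frame of $S$, the $V$-component vanishes automatically from $g^{f}(V,V)=0$ and metric compatibility. The $Z$-component equals $g^{f}(\nabla^{g^{f}}_{A}V,Z)$; since every scalar product appearing in its Koszul expansion pairs a vector with $V$ or with $Z$, the identities above force this expansion to coincide term by term with the one computing $g^{L}(\nabla^{L}_{A}V,Z)=\alpha(A)$. Thus the $Z$-component is $\alpha(A)$, and there only remains to show that the $S$-component $g^{f}(\nabla^{g^{f}}_{A}V,Y)$ vanishes for all $Y \in \Gamma(U,S)$ and all $A$.

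I would split this into subcases $A=V$, $A=Z$, $A=\tilde{Y}\in\Gamma(U,S)$. The first two are short: horizontality gives $[V,Y]\in\Gamma(U,S)$ and hence $g^{L}([V,Y],Z)=0$, while decentness gives $\alpha|_{\Xi^{\perp}}=0$ and hence $g^{L}([Y,Z],V)=-\alpha(Y)=0$, which suffices to kill each Koszul term. The third subcase is where basicness of $f$ enters essentially: the only term in the Koszul expansion involving a derivative of $e^{f}$ is
\begin{equation*}
    V(g^{f}(\tilde{Y},Y))=V(e^{f})\,g^{L}(\tilde{Y},Y)+e^{f}\,V(g^{L}(\tilde{Y},Y))=e^{f}\,V(g^{L}(\tilde{Y},Y)),
\end{equation*}
so $V(f)=0$ aligns the $g^{f}$-Koszul expansion with its $g^{L}$-counterpart up to a uniform factor $e^{f}$ on the pairings taking place inside $S$. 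Comparison with the vanishing identity $2g^{L}(\nabla^{L}_{\tilde{Y}}V,Y)=2\alpha(\tilde{Y})g^{L}(V,Y)=0$ collapses the sum to a multiple of $g^{L}([Y,\tilde{Y}],V)$, which a direct calculation using $\nabla^{L}V=\alpha V$ and $\alpha|_{S}=0$ shows to be zero.

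Combining the three components yields $\nabla^{g^{f}}_{A}V=\alpha(A)V$, so $(X,g^{f},V)$ is almost decent with the same 1-form $\alpha$. Decentness is then automatic since $\Xi^{\perp_{g^{f}}}=\Xi^{\perp_{g^{L}}}$, and horizontality of $(X,g^{f},V,S)$ holds because its defining condition $[V,Y]\in\Gamma(U,S)$ is metric-independent. I expect the main obstacle to be the bookkeeping in the $\tilde{Y}\in S$ subcase: keeping track of where the factor $e^{f}$ does and does not appear in each Koszul term, and isolating the single place where $V(f)=0$ is needed.
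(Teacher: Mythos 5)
Your proposal is correct and follows essentially the same route as the paper: both proofs reduce the statement to showing that the Koszul formula for $g^{f}$ reproduces $\nabla^{L}_{\cdot}V=\alpha(\cdot)V$, using precisely the three inputs $V(f)=0$, $\alpha|_{\Xi^{\perp}}=0$, and horizontality; your organization by the $V$-, $Z$- and $S$-pairings of $\nabla^{g^{f}}_{A}V$, and your observation that the bracket condition $[V,Y]\in S$ is metric-independent (so almost-horizontality for $g^{f}$ is free once decency is re-established), are only cosmetic variations on the paper's case-by-case frame computation. One small imprecision to repair: in the $Z$-pairing you claim the expansion agrees with the $g^{L}$-expansion ``term by term'' because every product pairs with $V$ or $Z$, and in the subcase $A=Z$ you say the two cited facts ``suffice to kill each Koszul term''; in both places the term involving $[V,Z]$ (namely $g^{f}([V,Z],A)$, resp.\ $g^{f}([Z,V],Y)$) is of neither form and is not handled by $g^{f}(\cdot,V)=g^{L}(\cdot,V)$, $g^{f}(\cdot,Z)=g^{L}(\cdot,Z)$ alone. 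It is disposed of by noting that horizontality forces $\operatorname{pr}_{S}[V,Z]=0$, indeed $[V,Z]\in\Gamma(X,\Xi)$, since $g^{L}(\nabla^{L}_{V}Z,Y)=-g^{L}(Z,\nabla^{L}_{V}Y)=0$ for $Y\in\Gamma(U,S)$ (this is exactly the fact recorded in the proof of Lemma \ref{horizontal-isometric-flow}); with that one line added your argument is complete and coincides in substance with the paper's proof.
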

\begin{proof}
	First, we show that $\nabla^{f}_{\cdot}{V}=\nabla^{L}_{\cdot}{V}$. Let $(V,Y_{1},\ldots,Y_{\dim S},Z)$ be a local frame of $(X,g^{L})$ where
	$(Y_{\alpha})_{\alpha}$ is a $g^{L}$-orthonormal frame for $S$. The Koszul formula and $\alpha|_{\Xi^{\perp}}=0$ imply for
	$U_{1},U_{2} \in \{V,Y_{\cdot},Z\}$
	\begin{equation*}
		2g^{f}(\nabla^{f}_{U_{1}}{V},U_{2}) = Vg^{f}(U_{1},U_{2}) + g^{f}([U_{1},V],U_{2}) - g^{f}([V,U_{2}],U_{1}).
	\end{equation*}
	If $U_{1}=U_{2}=Z$ we derive $2g^{f}(\nabla^{f}_{Z}V,Z)=2g^{L}(\nabla^{L}_{Z}V,Z)=2\alpha(Z)$. If $U_{1},U_{2} \in \Xi^{\perp}$ we have
	$[V,U_{\cdot}] \in \Xi^{\perp}$. Hence,
	\begin{align*}
		2g^{f}(\nabla^{f}_{U_{1}}{V},U_{2}) &= V(e^{f})g^{L}(U_{1},U_{2})\\
						&\qquad + e^{f}(Vg^{L}(U_{1},U_{2}) + g^{L}([U_{1},V],U_{2}) - g^{L}([V,U_{2}],U_{1}))\\
						&= V(e^{f})g^{L}(U_{1},U_{2}) = 0
	\end{align*}
	since $f$ is $(X,\mathcal{X})$-basic. If $U_{1}=Z$ and $U_{2} \in S$ we conclude
	$2g^{f}(\nabla^{f}_{U_{1}}{V},U_{2})= e^{f}g^{L}([Z,V],U_{2}) -g^{L}([V,U_{2}],Z) = (e^{f}-1)g^{L}(Z,\nabla^{L}_{V}{U_{2}})=0$
	since $(X,g^{L})$ is horizontal. The case $U_{1} \in S$ and $U_{2}=Z$ is similar. On the other hand, $U_{1}=V$ and $U_{2}=Z$ implies
	$2g^{f}(\nabla^{f}_{U_{1}}{V},U_{2})= -g^{L}([V,Z],V)=\alpha(V)=0$. Finally,
	\begin{align*}
		2g^{f}(\nabla^{f}_{V}{Y_{\cdot}},Z) &= g^{f}(\underbrace{[V,Y_{\cdot}]}_{\in S},Z) -g^{f}([V,Z],Y_{\cdot}) -g^{f}([Y_{\cdot},Z],V)\\
				&=e^{f}g^{L}(Z,\underbrace{\nabla^{L}_{V}{Y_{\cdot}}}_{\in S}) +g^{L}(Z,\nabla^{L}_{Y_{\cdot}}{V}) = 0.
	\end{align*}
	Hence, $(X,g^{f})$ is horizontal.
\end{proof}
If $(X,g^{L})$ is a Walker coordinate neighborhood of the form $g^{L}=2dxdz + u_{\alpha}dy^{\alpha}dz + hdz^{2} + g_{\alpha\beta}dy^{\alpha}dy^{\beta}$
and we choose $V:=\partial_{x}$ and $Z:=\partial_{z}-\frac{1}{2}h\partial_{x}$ then the transverse conformal change is given by
$g^{f}=2dxdz + u_{\alpha}dy^{\alpha}dz + hdz^{2}+e^{f}g_{\alpha\beta}dy^{\alpha}dy^{\beta}$.\par
If $(X,g^{L},V,S)$ is horizontal then $[V,Z] \in \Gamma(X,\Xi)$, i.e., $V$ and $Z$ induce a 2-dimensional foliation on $X$. The $(V,S)$-metric $g^{R}$
is bundle-like w.r.t. this foliation if $(L_{Z}g^{R})|_{S \times S}=0$. If $(X,g^{L})$ is a Walker coordinate neighborhood as above this condition
corresponds to $\partial_{z}g_{\alpha\beta}=0$.
\begin{corollary}\label{horizontal-to-orbibundle}
	Let $(X,g^{L},V,S)$ be an almost horizontal spacetime and $\mathcal{L}^{\perp}$ a leaf of $\mathcal{X}^{\perp}$. If all leaves of
	$\mathcal{X}|_{\mathcal{L}^{\perp}}$ are compact then the projection
	$\mathcal{L}^{\perp} \rightarrow \Mod{\mathcal{L}^{\perp}}{\mathcal{X}|_{\mathcal{L}^{\perp}}}$ is a principal $S^{1}$-orbibundle over
	$\Mod{\mathcal{L}^{\perp}}{\mathcal{X}|_{\mathcal{L}^{\perp}}}$ for which $S|_{\mathcal{L}^{\perp}}$ defines a connection whose connection
	1-form is $g^{L}(Z,\cdot)$.
\end{corollary}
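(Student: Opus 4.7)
My plan is to split the argument into two essentially independent pieces: first, produce the principal $S^{1}$-orbibundle structure from the isometric Riemannian flow hypothesis; second, verify by direct algebraic computation that $S|_{\mathcal{L}^{\perp}}$ and $g^{L}(Z,\cdot)$ realize the desired principal connection on it.

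For the first step I would invoke Lemma \ref{horizontal-isometric-flow}(1), which says that almost horizontality forces $V|_{\mathcal{L}^{\perp}}$ to be a $g^{R}|_{\mathcal{L}^{\perp}}$-Killing vector field of constant length generating an isometric Riemannian flow on $(\mathcal{L}^{\perp},\mathcal{X}|_{\mathcal{L}^{\perp}})$. Since every orbit is compact by hypothesis, I would then appeal to the standard Seifert-type structure theorem for isometric flows with closed orbits, available within the Molino/Carri\`ere theory of Riemannian foliations: after rescaling $V$ so that the generic orbit has period $2\pi$, the orbit space $\mathcal{L}^{\perp}/\mathcal{X}|_{\mathcal{L}^{\perp}}$ inherits the structure of a Riemannian orbifold and the projection becomes a principal $S^{1}$-orbibundle, with orbifold singularities sitting precisely above the exceptional orbits whose periods are proper divisors of $2\pi$.

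For the second step I would set $\omega := g^{L}(Z,\cdot)|_{T\mathcal{L}^{\perp}}$ and verify three items. First, $\omega(V) = g^{L}(Z,V) = 1$ by Cor.~\ref{complementary-distrib}, so $\omega$ has the correct normalization on the fundamental vector field of the $S^{1}$-action. Second, because $\Xi$ and $\Theta$ are the two lightlike lines inside the rank-two, signature $(1,1)$ subbundle $S^{\perp}$, one has $\Theta \perp_{g^{L}} S$ and hence $\omega|_{S} = 0$; so $\ker\omega = S|_{\mathcal{L}^{\perp}}$. Third, $V$-invariance of both $S|_{\mathcal{L}^{\perp}}$ and $\omega$ follows at once from the defining condition of almost horizontality, $[V,Y] \in \Gamma(S)$ for $Y \in \Gamma(S)$: a one-line computation gives $(L_{V}\omega)(Y) = V(g^{L}(Z,Y)) - g^{L}(Z,[V,Y]) = 0$, and $(L_{V}\omega)(V) = 0$ is trivial.

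I expect the substantive obstacle to lie entirely in Step 1, namely passing from an isometric flow with closed orbits to an actual principal $S^{1}$-orbibundle structure. The delicate point is that the period function is only lower semicontinuous and may jump downward on exceptional orbits, so one cannot produce a free $S^{1}$-action directly and must package this behaviour as orbifold singularities via the structure theory of Riemannian flows. Step 2 is then a routine unwinding of the two algebraic identities $g^{L}(Z,V) = 1$, $g^{L}(Z,\Gamma(S)) = 0$, together with the bracket relation $[V,\Gamma(S)] \subset \Gamma(S)$.
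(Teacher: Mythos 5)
Your proposal is correct and follows essentially the same route as the paper: obtain the isometric Riemannian flow on $(\mathcal{L}^{\perp},\mathcal{X}|_{\mathcal{L}^{\perp}},g^{R}|_{\mathcal{L}^{\perp}})$ from Lemma \ref{horizontal-isometric-flow}, and then invoke the structure theory for isometric flows with compact orbits (the paper cites Wadsley's theorem and \cite[Prop. 3.7]{MR932463}, which is exactly the Seifert-type result you appeal to) to get the principal $S^{1}$-orbibundle. Your explicit verification that $g^{L}(Z,V)=1$, $g^{L}(Z,\cdot)|_{S}=0$, $\ker g^{L}(Z,\cdot)|_{T\mathcal{L}^{\perp}}=S|_{\mathcal{L}^{\perp}}$ and $L_{V}(g^{L}(Z,\cdot))=0$ via $[V,\Gamma(S)]\subset\Gamma(S)$ simply spells out what the paper delegates to its references.
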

\begin{proof}
	Since $(X,g^{L},V,S)$ is almost horizontal $(\mathcal{L}^{\perp},\mathcal{X}|_{\mathcal{L}^{\perp}},g^{R}|_{\mathcal{L}^{\perp}})$ is an
	isometric flow and the statement follows from \cite[Prop. 3.7]{MR932463} and a theorem of Wadsley.
	See \cite[Thm. 6.3.8]{MR2382957} or \cite[Prop. 3]{MR2482083} for the details.
\end{proof}
The following examples of horizontal spacetimes show that the leaves of $\mathcal{X}$ and $\mathcal{X}^{\perp}$ are not necessarily closed.
\begin{example}
	Let $(M:=S^{1} \times S^{1},g)$ be the flat torus and $a \in \R \setminus \Q$. Write $\partial_{x}$, $\partial_{y}$ for the standard coordinate
	vector fields on $M$ and define $\eta:=g(\partial_{x}-\frac{1}{a}\partial_{y},\cdot)$. The trivial $S^{1}$-bundle $S^{1} \times T^{2}$ admits a
	weakly irreducible horizontal Lorentzian metric $g^{L}$ such that the leaves of $\mathcal{X}$ are the fibers of the bundle. Moreover, all leaves
	of $\mathcal{X}^{\perp}$ are dense in $S^{1} \times T^{2}$.
\end{example}
\begin{proof}
	The construction of the metric is a special case of \cite[Prop. 3.1]{laerz-2008-a}. There the foliation $\mathcal{X}^{\perp}$ is defined by
	$\pi^{*}\eta$. If $\eta:=g(\partial_{x}-\frac{1}{a}\partial_{y},\cdot)$ then $\mbox{Ker}\eta = \mbox{span}\{\partial_{x}+a\partial_{y}\}$.
	Hence, the leaves of $\pi^{*}\eta$ are dense in $S^{1} \times T^{2}$.
\end{proof}
\begin{example}
	Let $X:= T^{2} \times S^{1}$ where $T^{2}:=S^{1}\times S^{1}$ and $a \in \R \setminus \Q$. Write $\partial_{x},\partial_{y}$ for the standard
	coordinate vector fields of $T^{2}$ and $\partial_{z}$ for the last standard coordinate vector field in $T^{2} \times S^{1}$. For
	$f \in C^{\infty}(T^{2})$ define $g^{L}_{f}$ by
	\begin{align*}
		&g^{L}_{f}(\partial_{x}+a\partial_{y},\partial_{x}+a\partial_{y})=g^{L}_{f}(\partial_{x}+a\partial_{y},\partial_{y})
									=g^{L}_{f}(\partial_{y},\partial_{z}) = 0,\\
		&g^{L}_{f}(\partial_{x}+a\partial_{y},\partial_{z})
			=g^{L}_{f}(\partial_{y},\partial_{y})=1~\text{and}~g^{L}_{f}(\partial_{z},\partial_{z})=f.
	\end{align*}
	Then we have $\nabla^{g^{L}_{f}}_{\cdot}{(\partial_{x}+a\partial_{y})}=\alpha(\cdot)(\partial_{x}+a\partial_{y})$ such that
	$\alpha|_{\Xi^{\perp}}=0$ and $Hol(X,g^{L}_{f})=\R \ltimes \R$ for a suitable choice of $f$.
	In particular, $\mathcal{L}^{\perp}=T^{2}$ for any leaf of $\mathcal{X}^{\perp}$ and all leaves of $\mathcal{X}$ are dense in $T^{2}$.
	Finally, $(X,g^{L}_{f},\partial_{x}+a\partial_{y},\mbox{span}\{\partial_{y}\})$ is horizontal.
\end{example}
\begin{proof}
	Define $V:=\partial_{x}+a\partial_{y}$. Then $[V,\partial_{y}]=[V,\partial_{z}]=[\partial_{y},\partial_{z}]=0$, i.e., locally we
	have coordinates $(\tilde{x},\tilde{y},\tilde{z})$ such that $V=\partial_{\tilde{x}}$, $\partial_{y}=\partial_{\tilde{y}}$ and
	$\partial_{z}=\partial_{\tilde{z}}$. In these local coordinates the Lorentzian metric is given by
	\begin{equation*}
		g^{L}_{f} = 2d\tilde{x}d\tilde{z} + fd\tilde{z}^{2} + d\tilde{y}^{2}.
	\end{equation*}
	Since $V=\partial_{\tilde{x}}$ we conclude that $\nabla^{L}_{\cdot}{V}=\alpha(\cdot)V$ and $\alpha|_{\Xi^{\perp}}=0$. If the restriction of
	$g^{L}$ to the coordinate neighborhood $U$ is weakly irreducible so is $(X,g^{L}_{f})$. Using a suitable choice of $f \in C^{\infty}(T^{2})$ we
	derive a weakly irreducible neighborhood $(U,g^{L}_{f})$ and $\frac{\partial^{2}f}{\partial \tilde{x}^{2}} \neq 0$.\par
	The maximal integral curves of $V$ are dense in $T^{2}$ since $a \in \R \setminus \Q$. Moreover,
	$\Xi^{\perp}=\mbox{span}\{V,\partial_{y}\}$, i.e., $\mathcal{L}^{\perp}=T^{2}$. The global vector field $\partial_{y}$ defines
	a non-canonical realization $S$ of the screen bundle. Hence, $S$ admits a global nowhere vanishing section which is $\nabla^{S}$-parallel and
	we conclude $Hol(X,g^{L}_{f})=\R \ltimes \R$. Finally, $\partial_{y}=\partial_{\tilde{y}}$ and the local coordinate structure imply
	$\nabla^{L}_{V}{\partial_{y}}=0$, i.e., $(X,g^{L}_{f})$ is horizontal.
\end{proof}
Example \ref{bazaikin-example} is in fact horizontal if $V:=\partial_{x}$ and $S:=TM$ and another class of globally hyperbolic decent spacetimes was
constructed in \cite{MR2350042}. Using the notation of \cite{MR2350042} we derive horizontal spacetimes if $S:=TF$.
%
Finally, Tom Krantz constructed another class of weakly irreducible spacetimes which are almost horizontal by \cite[Prop. 4]{MR2578019}.
\section{Ricci Comparison for Decent Spacetimes}
Let $(X,\mathcal{F})$ be a foliated manifold. A differential $r$-form $\omega$ on $X$ is $X$-basic or basic if $V \lrcorner \omega =0$ and
$L_{V}\omega =0$. We derive a sheaf of germs of basic $r$-forms and write $\Lambda^{r}_{B}\mathcal{F}$ for its space of global sections.
By definition, if $\omega$ is basic so is $d\omega$. Hence, we have the {\em basic cohomology ring} $H^{*}_{B}(X,\mathcal{F})$ of $(X,\mathcal{F})$.
If $X$ is connected we have $H^{0}_{B}(X,\mathcal{F})=\R$ and a group monomorphism $H^{1}_{B}(X,\mathcal{F}) \hookrightarrow H^{1}(X,\R)$ induced by
$\Lambda^{1}_{B}\mathcal{F} \hookrightarrow \Lambda^{1}TX$ (cf. \cite{MR2382957}[Prop. 2.4.1]).\par
Let $(X,g^{L},V)$ be a decent spacetime and consider the $(V,S)$-metric $g^{R}$ for some realization $S$ of the screen bundle.
If $X$ is compact we have $b_{1}(X) \geq 1$ by Cor.\ref{all-leaves-closed} and if $\mathcal{X}^{\perp}$ admits a compact leaf the projection onto
the space of leaves is a fiber bundle $X \rightarrow S^{1}$ whose fibers are the leaves of $\mathcal{X}^{\perp}$. Hence, $X$ is a mapping torus, i.e.,
if $\mathcal{L}^{\perp}$ is a leaf of $\mathcal{X}^{\perp}$ there is a diffeomorphism $F$ of $\mathcal{L}^{\perp}$ such that
$X = \Mod{\mathcal{L}^{\perp} \times [0,1]}{\sim}$ where $(p,0)\sim (F(p),1)$. Using Cor. \ref{all-leaves-closed} we have
$b_{1}(X)=b_{1}(\mathcal{L}^{\perp})+1$. For the higher Betti numbers a Mayer-Vietoris argument yields the following exact sequence in
singular homology
\begin{equation*}
	\longrightarrow H_{i}(\mathcal{L}^{\perp}) \stackrel{Id -F^{i}_{*}}{\longrightarrow} H_{i}(\mathcal{L}^{\perp})
				\stackrel{\iota_{*}}{\longrightarrow} H_{i}(X) \longrightarrow H_{i-1}(\mathcal{L}^{\perp})
			\stackrel{Id -F^{i-1}_{*}}{\longrightarrow} H_{i-1}(\mathcal{L}^{\perp}) \longrightarrow
\end{equation*}
where $F^{i}_{*}$ is the morphism induced by $F$ and $\iota: \mathcal{L}^{\perp} \hookrightarrow X$ is the inclusion.
On the other hand, if $X$ is non-compact and all leaves of $\mathcal{X}^{\perp}$ are compact then the natural projection induces a fiber bundle map
$X \rightarrow \R$ \cite{MR1453120}, i.e., $X \cong \mathcal{L}^{\perp} \times \R$ and $b_{i}(X)=b_{i}(\mathcal{L}^{\perp})$.\par
Consider an arbitrary almost decent spacetime $(X,g^{L},V)$ and suppose $\mathcal{X}^{\perp}$ admits a compact leaf $\mathcal{L}^{\perp}$. If
$\tilde{g}^{R}$ is a bundle-like Riemannian metric on the compact foliated manifold $(\mathcal{L}^{\perp},\mathcal{X}|_{\mathcal{L}^{\perp}})$ and
$E^{\perp}$ is the $\tilde{g}^{R}$-orthogonal complement of $V$ we define the mean curvature 1-form by
$\kappa_{\tilde{g}^{R}}:=\tilde{g}^{R}(pr_{E^{\perp}}(\nabla^{\tilde{g}^{R}}_{\frac{V}{\|V\|_{\tilde{g}^{R}}}}{\frac{V}{\|V\|_{\tilde{g}^{R}}}}),\cdot)$.
Since $\mathcal{L}^{\perp}$ is compact \cite{MR1657170} and \cite{MR1811936} imply the existence of a bundle-like Riemannian metric $\tilde{g}^{B}$ on
$\mathcal{L}^{\perp}$ such that $\kappa_{\tilde{g}^{B}}$ is basic and harmonic w.r.t. the basic Laplacian.
In this case, the Euler form $\mathbf{e}$ of $(\mathcal{L}^{\perp},\mathcal{X}|_{\mathcal{L}^{\perp}},\tilde{g}^{B})$ is defined using Rummler's formula
\begin{equation*}
	d(\tilde{g}^{B}(\frac{V}{\|V\|},\cdot)) = \tilde{g}^{B}(\frac{V}{\|V\|},\cdot) \wedge \kappa_{\tilde{g}^{B}} + \mathbf{e}.
\end{equation*}
In \cite{MR1871045} Royo Prieto proved the existence of a Gysin sequence for $(\mathcal{L}^{\perp},\mathcal{X}|_{\mathcal{L}^{\perp}},g^{B})$ relating
the basic cohomology of $(\mathcal{L}^{\perp},\mathcal{X}|_{\mathcal{L}^{\perp}})$ to the cohomology of $\mathcal{L}^{\perp}$
by\footnote{We have seen in Lemma \ref{horizontal-isometric-flow} that
			$(\mathcal{L}^{\perp},\mathcal{X}|_{\mathcal{L}^{\perp}},g^{R}|_{\mathcal{L}^{\perp}})$ is an isometric Riemannian flow
			if $g^{R}$ is the $(V,S)$-metric of an almost horizontal spacetime $(X,g^{L},V,S)$. Thus,
			$\kappa_{g^{R}|_{\mathcal{L}^{\perp}}}=0$ and the Gysin sequence for
			$(\mathcal{L}^{\perp},\mathcal{X}|_{\mathcal{L}^{\perp}},g^{R}|_{\mathcal{L}^{\perp}})$ is given by
			\begin{equation*}
				\cdots \rightarrow H^{i}_{B}(\mathcal{X}|_{\mathcal{L}^{\perp}}) \rightarrow H^{i}(\mathcal{L}^{\perp},\R) \rightarrow
					H^{i-1}_{B}(\mathcal{X}|_{\mathcal{L}^{\perp}}) \stackrel{\delta}{\rightarrow}
						H^{i+1}_{B}(\mathcal{X}|_{\mathcal{L}^{\perp}}) \rightarrow \cdots
			\end{equation*}
			where $\delta = [dg^{L}(Z,\cdot) \wedge \cdot]$ (cf. \cite[Thm. 7.2.1]{MR2382957}). In particular, the Euler class is given by
			$[dg^{L}(Z,\cdot)] \in H^{2}_{B}(\mathcal{X}|_{\mathcal{L}^{\perp}})$.}
\begin{equation*}
	\cdots \rightarrow H^{i}_{B}(\mathcal{X}|_{\mathcal{L}^{\perp}}) \rightarrow H^{i}(\mathcal{L}^{\perp},\R) \rightarrow
			H^{i-1}_{d-\kappa_{\tilde{g}^{B}}}(\mathcal{X}|_{\mathcal{L}^{\perp}}) \stackrel{[\cdot \wedge \mathbf{e}]}{\rightarrow}
				H^{i+1}_{B}(\mathcal{X}|_{\mathcal{L}^{\perp}}) \rightarrow \cdots
\end{equation*}
Here, we write $H^{*}_{d-\kappa_{\tilde{g}^{B}}}(\mathcal{X}|_{\mathcal{L}^{\perp}})$ for the {\em dual basic cohomology} which can be defined in
the following way. If $(X,\mathcal{F},\tilde{g}^{B})$ is a Riemannian flow whose mean curvature $\kappa_{\tilde{g}^{B}}$ 1-form is basic and
harmonic w.r.t. the basic Laplacian then $H^{*}_{d-\kappa_{\tilde{g}^{B}}}(X,\mathcal{F})$ is the cohomology of the complex
$(\Lambda_{B}^{*}\mathcal{F},d-\kappa_{\tilde{g}^{B}}\wedge \cdot)$. It can be shown that
$H^{i}_{B}(X,\mathcal{F}) \cong H^{\dim\mathcal{L}^{\perp} -i}_{d-\kappa_{\tilde{g}^{B}}}(X,\mathcal{F})$ for all
$i \geq 0$ \cite[Sec. 1.5]{habib-richardson-2010}.\par
For a Riemannian flow $(X,\mathcal{F},\tilde{g}^{B})$ whose mean curvature 1-form $\kappa_{\tilde{g}^{B}}$ is basic and harmonic consider the twisted
differential $d_{\kappa}:=d-\frac{1}{2}\kappa_{\tilde{g}^{B}}\wedge$. The {\em twisted basic cohomology} $H^{*}_{tw}(X,\mathcal{F})$ is defined as the
cohomology of the complex $(\Lambda_{B}^{*}\mathcal{F},d_{\kappa})$ and if $\delta_{\kappa}$ denotes the formal $L^{2}$-adjoint of $d_{\kappa}$ on
$\Lambda_{B}^{*}\mathcal{F}$ the twisted basic Laplacian is defined by $\Delta_{\kappa}:=d_{\kappa}\delta_{\kappa}+\delta_{\kappa}d_{\kappa}$. In
\cite{habib-richardson-2010} Habib and Richardson proved a Hodge decomposition theorem for $\Delta_{\kappa}$ and the following Weitzenb\"{o}ck formula
for any basic form $\varphi \in \Lambda_{B}^{*}\mathcal{F}$
\begin{equation*}
	\Delta_{\kappa} \varphi = {\nabla^{T}}^{*}\nabla^{T} \varphi + \sum_{i,j}{e^{j} \wedge e_{i} \lrcorner R^{T}(e_{i},e_{j})\varphi}
					+\frac{1}{4}|\kappa_{\tilde{g}^{B}}|^{2}\varphi.
\end{equation*}
Here we write ${\nabla^{T}}^{*}$ for the formal $L^{2}$-adjoint of the transverse Levi-Civita connection on basic forms and
$R^{T}(e_{i},e_{j}):=[\nabla^{T}_{e_{i}},\nabla^{T}_{e_{j}}]-\nabla^{T}_{[e_{i},e_{j}]}$ where $(e_{1},\ldots,e_{\dim X -1})$ is a transverse orthonormal
frame. For a basic 1-form $\varphi$ Habib and Richardson proved
\begin{equation*}
	\skal{\Delta_{\kappa}\varphi}{\varphi} = \skal{{\nabla^{T}}^{*}\nabla^{T}\varphi}{\varphi} + Ric^{T}(\varphi^{\sharp},\varphi^{\sharp})
							+\frac{1}{4}|\kappa_{\tilde{g}^{B}}|^{2}|\varphi|^{2},
\end{equation*}
where $Ric^{T}$ is the Ricci curvature of $\nabla^{T}$.\par
If $(X,g^{L},V)$ is an almost decent spacetime and $S$ a realization of the screen bundle then Cor. \ref{transverse-levi-civita} implies
$\nabla^{S}|_{\mathcal{L}^{\perp}}=\nabla^{T}$ where $\nabla^{T}$ is transverse Levi-Civita connection of the Riemannian flow
$(\mathcal{L}^{\perp},\mathcal{X}|_{\mathcal{L}^{\perp}},g^{R}|_{\mathcal{L}^{\perp}})$. If $(Y_{1},\ldots,Y_{\dim S})$ is a local orthonormal frame
of $S$ we write $E_{\pm}:=\frac{1}{\sqrt{2}}(V \pm Z)$ and conclude
\begin{align*}
	Ric^{L}(Y_{\alpha},Y_{\beta}) &= -g^{L}(R^{L}(Y_{\alpha},E_{-})E_{-},Y_{\beta}))
						+\sum_{k=1}^{\dim S}{g^{L}(R^{L}(Y_{\alpha},Y_{k})Y_{k},Y_{\beta})}\\
					&\qquad +g^{L}(R^{L}(Y_{\alpha},E_{+})E_{+},Y_{\beta}))\\
			&= \underbrace{g^{L}(R^{L}(Y_{\alpha},V)Z,Y_{\beta})}_{=-g^{L}(R^{L}(Z,Y_{\beta})V,Y_{\alpha})}
					+ g^{L}(\underbrace{R^{L}(Y_{\alpha},Z)V}_{\in \Xi},Y_{\beta})\\
					&\qquad + \sum_{k=1}^{\dim S}{g^{L}(R^{L}(Y_{\alpha},Y_{k})Y_{k},Y_{\beta})}\\
			&= \sum_{k=1}^{\dim S}{g^{R}(R^{S}(Y_{\alpha},Y_{k})Y_{k},Y_{\beta})} = Ric^{T}(Y_{\alpha},Y_{\beta})
\end{align*}
for the Ricci curvature $Ric^{L}$ of $(X,g^{L})$. Note that $Ric^{L}(V,\cdot)|_{\Xi^{\perp}}=0$.
\begin{proposition}\label{cohom-of-a-leaf}
	Let $(X,g^{L},V)$ be an almost decent spacetime and $\mathcal{L}^{\perp}$ a compact leaf of $\mathcal{X}^{\perp}$.
	If $Ric^{L}(W,W) \geq 0$ for all $W \in T\mathcal{L}^{\perp}$ then $b_{1}(\mathcal{L}^{\perp}) \leq \dim\mathcal{L}^{\perp}$.
	If additionally $Ric^{L}_{q}(W,W) > 0$ for some $q \in \mathcal{L}^{\perp}$ and all $W \in S_{q}$ then
	$b_{1}(\mathcal{L}^{\perp}) \leq 1$.
\end{proposition}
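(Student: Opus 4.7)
The strategy is a Bochner-type argument on twisted basic harmonic $1$-forms, exploiting the Weitzenb\"ock identity of Habib-Richardson recalled above. First, by \cite{MR1657170} and \cite{MR1811936} I fix a bundle-like metric $\tilde{g}^{B}$ on $(\mathcal{L}^{\perp},\mathcal{X}|_{\mathcal{L}^{\perp}})$ whose mean curvature $\kappa := \kappa_{\tilde{g}^{B}}$ is basic and harmonic with respect to the basic Laplacian; the Hodge theorem of Habib-Richardson then represents every class in $H^{1}_{tw}(\mathcal{X}|_{\mathcal{L}^{\perp}})$ by a unique $\Delta_{\kappa}$-harmonic basic $1$-form $\varphi$.

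Integrating the Weitzenb\"ock identity over the compact leaf gives
\begin{equation*}
	0 = \int_{\mathcal{L}^{\perp}}{\left( |\nabla^{T}\varphi|^{2} + Ric^{T}(\varphi^{\sharp},\varphi^{\sharp}) + \tfrac{1}{4}|\kappa|^{2}|\varphi|^{2} \right)}.
\end{equation*}
Since $\varphi$ is basic, $\varphi^{\sharp}$ is transverse to the flow, and the identification $Ric^{T}=Ric^{L}|_{S}$ derived immediately before the proposition, together with the hypothesis $Ric^{L}\geq 0$ on $T\mathcal{L}^{\perp}\supset S$, renders each integrand pointwise non-negative. Hence each term vanishes; in particular $\nabla^{T}\varphi=0$, so every twisted basic harmonic $1$-form is parallel in the rank-$(\dim\mathcal{L}^{\perp}-1)$ transverse bundle. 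As parallel sections are determined by their value at a single point this yields $\dim H^{1}_{tw}(\mathcal{X}|_{\mathcal{L}^{\perp}})\leq \dim\mathcal{L}^{\perp}-1$.

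To translate this into a bound on $b_{1}(\mathcal{L}^{\perp})$ I combine the Habib-Richardson identification $H^{1}_{B}\cong H^{1}_{tw}$ with the piece
\begin{equation*}
	H^{1}_{B}(\mathcal{X}|_{\mathcal{L}^{\perp}}) \longrightarrow H^{1}(\mathcal{L}^{\perp},\R) \longrightarrow H^{0}_{d-\kappa}(\mathcal{X}|_{\mathcal{L}^{\perp}})
\end{equation*}
of the Gysin sequence recalled in the footnote. A short ODE argument along paths in $\mathcal{L}^{\perp}$ shows $\dim H^{0}_{d-\kappa}\leq 1$: a non-trivial basic solution $f$ of $df=f\kappa$ is nowhere vanishing, hence it is determined by its value at one point and forces $\kappa$ to be exact. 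Therefore
\begin{equation*}
	b_{1}(\mathcal{L}^{\perp})\leq \dim H^{1}_{B} + \dim H^{0}_{d-\kappa} \leq (\dim\mathcal{L}^{\perp}-1)+1 = \dim\mathcal{L}^{\perp}.
\end{equation*}

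For the strict-positivity claim, parallelism makes $|\varphi|$ constant; the identity $Ric^{T}_{q}(\varphi^{\sharp}_{q},\varphi^{\sharp}_{q})=0$ together with strict positivity of $Ric^{L}_{q}$ on all of $S_{q}$ then forces $\varphi^{\sharp}_{q}=0$, so $\varphi\equiv 0$ and hence $H^{1}_{tw}=H^{1}_{B}=0$, giving $b_{1}(\mathcal{L}^{\perp})\leq 1$. The main technical friction I anticipate is bridging the twisted and untwisted basic cohomologies: the Weitzenb\"ock identity is intrinsic to $\Delta_{\kappa}$ and $H^{*}_{tw}$, whereas the Gysin sequence is formulated in terms of $H^{*}_{B}$, so the needed comparison must be lifted from \cite{habib-richardson-2010}.
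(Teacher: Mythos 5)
Your overall skeleton (Weitzenb\"ock integration over the compact leaf, the identification $Ric^{T}=Ric^{L}|_{S}$, the Gysin piece $0 \to H^{1}_{B} \to H^{1}(\mathcal{L}^{\perp},\R) \to H^{0}_{d-\kappa}$, and the bound $\dim H^{0}_{d-\kappa}\leq 1$, which your ODE argument establishes correctly and which the paper gets instead from the duality $H^{0}_{d-\kappa}\cong H^{\dim\mathcal{L}^{\perp}-1}_{B}\in\{\R,0\}$) matches the intended proof. But there is a genuine gap at the bridge you yourself flag: the ``Habib--Richardson identification $H^{1}_{B}\cong H^{1}_{tw}$'' does not exist. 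Twisted and untwisted basic cohomology coincide only in the taut case (where $\kappa$ may be taken to vanish); for a non-taut flow they genuinely differ --- already $H^{0}_{tw}=0$ while $H^{0}_{B}=\R$, and for Carri\`ere's flow on a hyperbolic $T^{2}$-bundle one has $H^{1}_{B}=\R$ while the twisted cohomology vanishes in all degrees. Since the differentials $d$ and $d-\frac{1}{2}\kappa\wedge$ are different operators on the same complex, there is not even a natural comparison map. Consequently your Bochner argument, which is carried out for $\Delta_{\kappa}$-harmonic representatives, bounds $\dim H^{1}_{tw}$ but says nothing about $\dim H^{1}_{B}$, and it is $H^{1}_{B}$ that enters the Gysin sequence. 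Note also that a case split on tautness does not repair this: in the non-taut case your argument even gives $H^{1}_{tw}=0$ (the term $\frac{1}{4}|\kappa|^{2}|\varphi|^{2}$ must vanish as well) and $H^{0}_{d-\kappa}=0$, but the required estimate $\dim H^{1}_{B}\leq\dim\mathcal{L}^{\perp}-1$ is still unproven.

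The repair is to run the Bochner argument directly on the untwisted basic cohomology: by the Hodge theorem for the basic Laplacian, represent a class in $H^{1}_{B}(\mathcal{X}|_{\mathcal{L}^{\perp}})$ by a basic $1$-form with $d\varphi=\delta_{B}\varphi=0$; for such forms Habib--Richardson's Theorem 6.16 yields $0=\int_{\mathcal{L}^{\perp}}|\nabla^{T}\varphi|^{2}+\int_{\mathcal{L}^{\perp}}Ric^{T}(\varphi^{\sharp},\varphi^{\sharp})$ (no $|\kappa|^{2}$ term), so $\nabla^{T}\varphi=0$ and $\dim H^{1}_{B}\leq\dim S=\dim\mathcal{L}^{\perp}-1$; their Corollary 6.17 gives $H^{1}_{B}=0$ under the quasi-positivity hypothesis. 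With these bounds your final step $b_{1}(\mathcal{L}^{\perp})\leq\dim H^{1}_{B}+\dim H^{0}_{d-\kappa}\leq\dim\mathcal{L}^{\perp}$, respectively $\leq 1$, goes through exactly as you wrote it.
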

\begin{proof}
	Suppose $Ric^{L}(W,W) \geq 0$ and let $\tilde{g}^{B}$ be a bundle-like Riemannian metric
	on $(\mathcal{L}^{\perp},\mathcal{X}|_{\mathcal{L}^{\perp}})$ having a basic and harmonic mean curvature form $\kappa_{\tilde{g}^{B}}$.
	By the Hodge theorem for the basic Laplacian \cite{MR1146730} a class $[\varphi] \in H^{1}_{B}(\mathcal{X}|_{\mathcal{L}^{\perp}})$ can
	be represented by a basic 1-form $\varphi$ such that $d\varphi=\delta_{B}\varphi=0$ where $\delta_{B}$ is the $L^{2}$-adjoint of
	$d|_{\Lambda^{1}_{B}\mathcal{X}|_{\mathcal{L}^{\perp}}}$. In this case, the Weitzenb\"{o}ck formula (cf. \cite[Thm. 6.16]{habib-richardson-2010})
	has the form $0 = \int_{\mathcal{L}^{\perp}}{|\nabla^{T}\varphi|^{2}} + \int_{\mathcal{L}^{\perp}}{Ric^{T}(\varphi^{\sharp},\varphi^{\sharp})}$.
	Hence, $\nabla^{T}\varphi =0$, i.e., $\dim H^{1}_{B}(\mathcal{X}|_{\mathcal{L}^{\perp}}) \leq \dim \mathcal{S}$ for dimensional reasons.
	If $Ric^{L}_{q}(Y,Y) > 0$ at $q \in \mathcal{L}^{\perp}$ \cite[Cor. 6.17]{habib-richardson-2010} implies
	$H^{1}_{B}(\mathcal{X}|_{\mathcal{L}^{\perp}})=0$. Since
	$H^{0}_{d-\kappa_{\tilde{g}^{B}}}(\mathcal{X}|_{\mathcal{L}^{\perp}}) = H^{\dim \mathcal{L}^{\perp}-1}_{B}(\mathcal{X}|_{\mathcal{L}^{\perp}})
										\in \{\R,0\}$ and
	\begin{equation*}
			0 \rightarrow H^{1}_{B}(\mathcal{X}|_{\mathcal{L}^{\perp}}) \rightarrow H^{1}(\mathcal{L}^{\perp},\R) \rightarrow
			H^{0}_{d-\kappa_{\tilde{g}^{B}}}(\mathcal{X}|_{\mathcal{L}^{\perp}}) \stackrel{[\cdot \wedge \mathbf{e}]}{\rightarrow}
				H^{2}_{B}(\mathcal{X}|_{\mathcal{L}^{\perp}})
	\end{equation*}
	we conclude $b_{1}(\mathcal{L}^{\perp}) \leq \dim H^{1}_{B}(\mathcal{X}|_{\mathcal{L}^{\perp}}) +1$.
\end{proof}
\begin{corollary}\label{bounds-for-cohom}
	Let $(X,g^{L},V)$ be a decent spacetime and $\mathcal{L}^{\perp}$ a leaf of $\mathcal{X}^{\perp}$. Suppose $Ric^{L}(W,W) \geq 0$ for
	all $W \in T\mathcal{L}^{\perp}$.
	\begin{enumerate}
		\item
		If $X$ is compact and $\mathcal{X}^{\perp}$ admits a compact leaf then $1 \leq b_{1}(X) \leq \dim X$.
		\item
		If $X$ is non-compact and all leaves of $\mathcal{X}^{\perp}$ are compact then $0 \leq b_{1}(X) \leq \dim X -1$.
	\end{enumerate}
	Moreover, if $Ric^{L}_{q}(W,W) > 0$ for some $q \in \mathcal{L}^{\perp}$ and all $W \in S_{q}$ the bounds are
	$1 \leq b_{1}(X) \leq 2$ and $0 \leq b_{1}(X) \leq 1$ respectively.
\end{corollary}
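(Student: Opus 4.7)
The plan is to combine the topological description of $X$ established in the discussion preceding Proposition \ref{cohom-of-a-leaf} with the transverse Bochner bound of that proposition; no new analysis is needed, only bookkeeping.

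I would split into the two cases. First, suppose $X$ is compact and $\mathcal{X}^{\perp}$ admits a compact leaf. By Cor.~\ref{all-leaves-closed}(3) the projection $X\to X/\mathcal{X}^{\perp}\in\{\R,S^{1}\}$ is a smooth fiber bundle, and compactness of $X$ forces the base to be $S^{1}$; hence $X$ is a mapping torus of $\mathcal{L}^{\perp}$ under some diffeomorphism $F$. The Wang/Mayer--Vietoris sequence displayed in the excerpt then gives $b_{1}(X)=b_{1}(\mathcal{L}^{\perp})+1$, and the lower bound $b_{1}(X)\geq 1$ is provided by the bundle map to $S^{1}$. In the non-compact case, with all leaves of $\mathcal{X}^{\perp}$ compact, the same discussion (invoking \cite{MR1453120}) yields $X\cong\mathcal{L}^{\perp}\times\R$, so $b_{i}(X)=b_{i}(\mathcal{L}^{\perp})$ for every $i$, and in particular $b_{1}(X)\geq 0$ trivially.

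Next I would feed Proposition \ref{cohom-of-a-leaf} into these two equalities. The curvature hypothesis $\mathrm{Ric}^{L}(W,W)\geq 0$ on $T\mathcal{L}^{\perp}$ is inherited by the compact leaf, so the proposition gives $b_{1}(\mathcal{L}^{\perp})\leq\dim\mathcal{L}^{\perp}=\dim X-1$. Substituting into the compact case produces $1\leq b_{1}(X)\leq\dim X$, and into the non-compact case $0\leq b_{1}(X)\leq\dim X-1$. Under the stronger pointwise positivity $\mathrm{Ric}^{L}_{q}(W,W)>0$ for $W\in S_{q}$, Proposition \ref{cohom-of-a-leaf} sharpens the leaf bound to $b_{1}(\mathcal{L}^{\perp})\leq 1$, which at once yields $1\leq b_{1}(X)\leq 2$ and $0\leq b_{1}(X)\leq 1$ in the respective cases.

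There is no real obstacle: the Bochner/Weitzenb\"ock work has already been carried out in Proposition \ref{cohom-of-a-leaf}, and the corollary is an assembly of that result with the mapping-torus (resp.\ trivial $\R$-bundle) structure forced on $X$ by a compact leaf under the decency hypothesis. The only point warranting a sentence is the identification of the base as $S^{1}$ in the compact case, which I would do as indicated above.
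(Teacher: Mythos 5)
Your argument is correct and coincides with the paper's own proof: the paper likewise combines the mapping-torus description (with $b_{1}(X)=b_{1}(\mathcal{L}^{\perp})+1$ from Cor.~\ref{all-leaves-closed} and the Mayer--Vietoris/Wang sequence) in the compact case, the product structure $X\cong\mathcal{L}^{\perp}\times\R$ in the non-compact case, and the leaf bounds from Prop.~\ref{cohom-of-a-leaf}. No substantive difference.
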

\begin{proof}
	Using the Mayer-Vietoris argument and Prop. \ref{cohom-of-a-leaf} we conclude $b_{1}(X) \leq b_{1}(\mathcal{L}^{\perp}) + 1 \leq \dim X$ if $X$
	is compact. In the non-compact case we observed $X \cong \mathcal{L}^{\perp} \times \R$, i.e.,
	$b_{1}(X) = b_{1}(\mathcal{L}^{\perp}) \leq \dim X -1$.
\end{proof}
\begin{proposition}
	The bounds in Cor. \ref{bounds-for-cohom} are optimal.
\end{proposition}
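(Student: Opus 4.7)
The plan is to exhibit, for each bound in Cor.~\ref{bounds-for-cohom}, an explicit decent spacetime with $Ric^{L} \geq 0$ along $T\mathcal{L}^{\perp}$ that saturates it. The lower bound $b_{1}(X)\geq 1$ in the compact case is automatic from Cor.~\ref{all-leaves-closed}, so the real content lies in the upper bounds and in achieving $b_{1}=0$ in the non-compact setting.

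For the two upper bounds without strict positivity, flat Lorentzian tori suffice. On $X_{c}=T^{n}$ equip the flat metric $g^{L}=2\,dx\,dz+\sum_{i=1}^{n-2}dy_{i}^{2}$ with the parallel lightlike field $V=\partial_{x}$; then $(X_{c},g^{L},V)$ is decent, $Ric^{L}\equiv 0$, the leaves of $\mathcal{X}^{\perp}$ are $(n-1)$-dimensional subtori, and $b_{1}(X_{c})=n=\dim X_{c}$. Replacing the $z$-factor by $\R$ produces the non-compact analog $X_{nc}=T^{n-1}\times \R$ with all leaves of $\mathcal{X}^{\perp}$ compact and $b_{1}(X_{nc})=n-1=\dim X_{nc}-1$.

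For the strict-positivity upper bounds, apply Example~\ref{bazaikin-example} with $(M,\tilde{g})$ a simply connected compact Riemannian manifold having $Ric^{\tilde{g}}>0$, for instance $M=S^{k}$ with $k\geq 2$. The Ricci computation preceding Prop.~\ref{cohom-of-a-leaf} yields $Ric^{L}(Y_{\alpha},Y_{\beta})=Ric^{\tilde{g}}(Y_{\alpha},Y_{\beta})$ on $S\subset TM$, hence $Ric^{L}|_{S}>0$ everywhere. Taking $L=S^{1}$ gives the compact example $X=S^{1}\times S^{1}\times S^{k}$ with $b_{1}(X)=2$, and $L=\R$ gives the non-compact example $X=S^{1}\times\R\times S^{k}$ with $b_{1}(X)=1$.

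The main obstacle is saturating the lower bound $b_{1}=0$ in the non-compact setting. Since $X\cong\mathcal{L}^{\perp}\times\R$ by Cor.~\ref{all-leaves-closed}, the task reduces to producing a compact leaf $\mathcal{L}^{\perp}$ with $b_{1}=0$ carrying an isometric Riemannian flow whose transverse Ricci is non-negative (and positive somewhere in the strict case). Guided by Cor.~\ref{horizontal-to-orbibundle} and the Gysin sequence in the footnote preceding Prop.~\ref{cohom-of-a-leaf}, realize $\mathcal{L}^{\perp}$ as a non-trivial principal $S^{1}$-bundle with non-vanishing Euler class over a simply connected positive-Ricci base, e.g.\ the Hopf bundle $\mathcal{L}^{\perp}=S^{2k+1}\to \mathbb{C}P^{k}$ with its Sasakian Reeb field playing the role of $V$. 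Extending this to a horizontal decent Lorentzian metric on $\mathcal{L}^{\perp}\times\R$ by a Bazaikin-type ansatz and using that the Euler class of the $S^{1}$-orbibundle equals $[dg^{L}(Z,\cdot)]\in H^{2}_{B}$, the Gysin sequence identifies $H^{1}(\mathcal{L}^{\perp},\R)\cong H^{1}_{B}(\mathcal{X}|_{\mathcal{L}^{\perp}})$, while Prop.~\ref{cohom-of-a-leaf} forces $H^{1}_{B}=0$ under the strict positivity hypothesis. The technical verification I expect to be the hardest is checking that this ansatz defines a global decent horizontal spacetime with the transverse Ricci positivity preserved; the algebraic-topological input is then enough to conclude $b_{1}(X)=0$.
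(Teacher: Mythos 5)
Your examples for the upper bounds are fine and essentially match the paper's: the flat Lorentzian torus (and $T^{n-1}\times\R$) saturates $b_{1}=\dim X$ and $b_{1}=\dim X-1$, and Example~\ref{bazaikin-example} with a positive-Ricci simply connected $M$ and $L\in\{S^{1},\R\}$ gives $b_{1}=2$ and $b_{1}=1$ under the strict hypothesis, just as in the paper (which uses the same pp-wave ansatz $2dxdz+fdz^{2}+g$, merely choosing $f$ so that the examples are in addition weakly irreducible). The gaps are in the lower bounds. First, the compact-case lower bound is not ``automatic'': Cor.~\ref{all-leaves-closed} shows the inequality $b_{1}\geq 1$ holds for every compact decent spacetime, but optimality means exhibiting a compact example with $b_{1}$ exactly $1$ (with $Ric^{L}\geq 0$ on the leaf, and again one with the strict positivity hypothesis), and you produce none; your final construction only treats $\mathcal{L}^{\perp}\times\R$. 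The paper handles this by taking $\tilde{M}\times S^{1}$, so that $b_{1}(X)=b_{1}(\mathcal{L}^{\perp})+1=1$.

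Second, and more seriously, the existence of the metrics needed for the $b_{1}=0$ (and $b_{1}=1$) examples is exactly the step you leave open. You propose $\mathcal{L}^{\perp}=S^{2k+1}\to\C P^{k}$ with the Reeb field as $V$ and a ``Bazaikin-type ansatz'' on $\mathcal{L}^{\perp}\times\R$, and you yourself flag that verifying this yields a global decent (indeed horizontal) spacetime with non-negative, somewhere positive transverse Ricci is the hard part; without that verification nothing attains the lower bounds. Note also that a naive product ansatz does not apply, since $V$ is the fiber direction of a non-trivial bundle, not a product factor. This is precisely where the paper invokes the explicit construction of \cite{laerz-2008-a}: for an $S^{1}$-bundle $\tilde{M}\to M$ with non-zero Euler class (over a Calabi--Yau base for the $Ric\geq 0$ case, over a positive-Ricci base for the strict case) one obtains weakly irreducible decent metrics on $\tilde{M}\times S^{1}$ and $\tilde{M}\times\R$ with $\mathcal{L}^{\perp}\cong\tilde{M}$ and $Ric^{T}|_{S\times S}=Ric(M,g)$, and then $b_{1}(\mathcal{L}^{\perp})=0$ by the ordinary Gysin sequence (for $S^{2k+1}$ this is of course immediate from simple connectedness, so your appeal to the basic-cohomology Gysin sequence and Prop.~\ref{cohom-of-a-leaf} is not needed there). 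So your outline points in the right direction, but as written the optimality of all four lower bounds rests on an unconstructed metric.
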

\begin{proof}
	First, we consider the upper bounds. If $(M,g)$ is a compact Riemannian manifold we derive weakly irreducible Lorentzian metrics on
	$S^{1} \times M \times \R$ and on $S^{1} \times M \times S^{1}$ as follows: If $\partial_{x}$ is the coordinate field on $S^{1}$ define
	$g^{L}:= 2dxdz +fdz^{2} + g$ where $\partial_{z}$ is the coordinate field of the last factor and $f \in C^{\infty}(M)$ is suitable.
	Then $\Xi = TS^{1}$, $\mathcal{L}^{\perp} \cong S^{1} \times M$ and $\nabla^{S}|_{\mathcal{L}^{\perp}}$ is flat if $(M,g)$ is the flat torus.\par
	For the second statement let $(M,g)$ be a compact simply connected Riemannian manifold with strictly positive Ricci curvature. Hence,
	$Ric^{T} > 0$ and the upper bounds are optimal.\par
	For the lower bounds let $(M,g)$ be a compact simply connected Calabi-Yau manifold, i.e., $Hol(M,g)=SU(n)$. Consider the total space
	$\tilde{M}$ of the $S^{1}$-bundle given by $0 \neq \alpha \in H^{1,1}_{prim}(M)\cap H^{2}(M,\Z)$. It is shown in \cite[Cor. 4.4]{laerz-2008-a}
	that $X:= \tilde{M} \times S^{1}$ and $X:= \tilde{M} \times S^{1}$ admit weakly irreducible Lorentzian metrics $g^{L}$ such that
	$Hol(X,g^{L})= SU(n) \ltimes \R^{2n}$. In particular, $\mathcal{L}^{\perp} \cong \tilde{M}$ and $\nabla^{S}|_{\mathcal{L}^{\perp}}$ is Ricci
	flat. The Gysin sequence for the $S^{1}$-bundles implies $b_{1}(\mathcal{L}^{\perp})=0$ since $0 \neq \alpha \in H^{2}(M,\R)$.\par
	Finally, we study the lower bounds if $Ric^{T}>0$. Let $(M,g)$ be a compact simply connected Riemannian manifold with strictly positive Ricci
	curvature and let $\alpha \in H^{2}(M,\Z)$ be a generator. Using the construction in \cite{laerz-2008-a} we derive weakly irreducible Lorentzian
	metrics on $X= \tilde{M} \times S^{1}$ and on $X=\tilde{M} \times \R$ where $\tilde{M}$ is the total space of the $S^{1}$-bundle given by
	$\alpha$. Moreover, $Ric^{T}|_{S \times S} = Ric(M,g)$ and $\mathcal{L}^{\perp} \cong \tilde{M}$. Hence, $b_{1}(\mathcal{L}^{\perp})=0$ by the
	Gysin sequence.
\end{proof}
We say $(X,g^{L})$ satisfies the {\em strong energy (timelike convergence) condition} at $p \in X$ if $Ric^{L}_{p}(W,W) \geq 0$ for any timelike vector
$W \in T_{p}X$. If $\nabla^{L}_{\cdot}{V}=0$ we have $Ric^{L}(V,\cdot)=0$ and $Ric^{L}(Z,Z) = \sum_{k}{g^{L}(R^{L}(Z,Y_{k})Y_{k},Z)}$ as well as
$Ric^{L}(Z,Y_{i}) = \sum_{k}{g^{L}(R^{S}(Z,Y_{k})Y_{k},Y_{i})}$.
\begin{remark}
	Let $(X,g^{L},V)$ be a decent spacetime such that $\nabla^{L}_{\cdot}{V}=0$ and $p \in X$. If $Ric^{L}_{p}(Z,Z)=0$ and
	$\sum_{k}{R^{S}_{p}(Z,Y_{k})Y_{k}}=0$ then $(X,g^{L})$ satisfies the strong energy condition at $p \in X$ if and only if
	$Ric^{L}_{p}(W,W) \geq 0$ for all $W \in \Xi^{\perp}_{p}$.\qed
\end{remark}
\section{Screen Holonomy and the Topology of Decent Spacetimes}
If there is an integrable realization of the screen bundle the Blumenthal-Hebda decomposition theorem \cite{MR699494} immediately implies
\begin{corollary}
	Let $(X,g^{L},V)$ be an almost decent spacetime and $\mathcal{L}^{\perp}$ a leaf of $\mathcal{X}^{\perp}$. Suppose $S$ is an integrable
	realization of the screen bundle and $p \in \mathcal{L}^{\perp}$.
	\begin{enumerate}
		\item
		If $g^{R}|_{\mathcal{L}^{\perp}}$ is complete then $\tilde{\mathcal{L}}^{\perp}= \R \times \tilde{\mathcal{S}}$ where
		$\tilde{\mathcal{L}}^{\perp}$ is the universal cover of $\mathcal{L}^{\perp}$ and $\tilde{\mathcal{S}}$ is the universal cover of a
		leaf of $S|_{\mathcal{L}^{\perp}}$.
		\item
		If $(X,g^{L},V,S)$ is horizontal such that $(L_{Z}g^{L})|_{S \times S}=0$ and $g^{R}$ is complete then
		$\tilde{X} = \R^{2} \times \tilde{\mathcal{S}}$ where $\tilde{X}$ is the universal cover of $X$.
		\item
		In both cases, if $Hol^{0}(\nabla^{S}) = H_{1} \times H_{2}$ then
		$\tilde{\mathcal{S}}=\tilde{\mathcal{S}}_{1} \times \tilde{\mathcal{S}}_{2}$ as Riemannian manifolds and
		$Hol(\tilde{\mathcal{S}}_{i}) \subset H_{i}$.
	\end{enumerate}
\end{corollary}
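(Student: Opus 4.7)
The overall plan is to apply the Blumenthal-Hebda decomposition theorem \cite{MR699494} to the appropriate universal covers, using integrability of $S$ together with the bundle-like statements already assembled in Section 2, and then to feed the transverse factor into the classical de~Rham decomposition in order to obtain (3). In each case the underlying Riemannian manifold carries two complementary orthogonal integrable distributions, and the real content lies in verifying bundle-likeness.

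For (1) I would lift $\mathcal{L}^{\perp}$ to its universal cover $\tilde{\mathcal{L}}^{\perp}$ and pull back $g^{R}|_{\mathcal{L}^{\perp}}$, which remains complete. The complementary distributions are $\Xi$ (tangent to the $1$-dimensional leaves of $\mathcal{X}|_{\mathcal{L}^{\perp}}$) and $S|_{\mathcal{L}^{\perp}}$ (integrable by hypothesis). Lemma \ref{bundle-like-restrictions}(1) provides bundle-likeness with respect to $\mathcal{X}|_{\mathcal{L}^{\perp}}$; the matching condition for $S|_{\mathcal{L}^{\perp}}$ reads $g^{R}([Y,V],V)=0$ for $Y\in\Gamma(U,S)$, which via $[Y,V]=-\nabla^{L}_{V}Y+\alpha(Y)V$ and $\alpha|_{\Xi^{\perp}}=0$ reduces to a short Koszul computation exploiting integrability of $S$ together with $\Xi\subset S^{\perp_{g^{L}}}$. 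Blumenthal-Hebda then yields $\tilde{\mathcal{L}}^{\perp}=\tilde{L}\times\tilde{\mathcal{S}}$, and the first factor is $\R$ as a simply connected complete $1$-manifold.

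For (2) the same scheme runs on $\tilde{X}$ with complementary orthogonal distributions $\mbox{span}\{V,Z\}$ and $S$: horizontality together with Lemma \ref{horizontal-isometric-flow} gives $[V,Z]\in\Gamma(X,\Xi)\subset\mbox{span}\{V,Z\}$, so the former is integrable, and $S$ is integrable by assumption. Bundle-likeness for $\mbox{span}\{V,Z\}$ requires $(L_{V}g^{R})|_{S\times S}=0$ and $(L_{Z}g^{R})|_{S\times S}=0$; the first is Lemma \ref{bundle-like-restrictions}(1) and the second is the hypothesis $(L_{Z}g^{L})|_{S\times S}=0$ combined with $g^{R}|_{S\times S}=g^{L}|_{S\times S}$. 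The symmetric verification for the $S$-foliation is analogous, again invoking horizontality and the $L_{Z}g^{L}$ hypothesis to control the $V$- and $Z$-components of $[Y,V]$ and $[Y,Z]$. Blumenthal-Hebda then delivers $\tilde{X}=\tilde{L}_{2}\times\tilde{\mathcal{S}}$, and the $2$-dimensional simply connected complete factor carrying two orthogonal $1$-dimensional integrable distributions is forced to be $\R^{2}$.

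For (3) I would identify the intrinsic Levi-Civita connection of a leaf $\mathcal{S}$ of (the lift of) $S$: because $S$ is integrable and $g^{R}|_{S\times S}=g^{L}|_{S\times S}$, the Koszul formula on $(\mathcal{S},g^{R}|_{\mathcal{S}})$ reproduces $\nabla^{S}|_{\mathcal{S}}$, which agrees with the transverse Levi-Civita connection by Cor.\ \ref{transverse-levi-civita}. Hence the induced holonomy of $\tilde{\mathcal{S}}$ is contained in $Hol^{0}(\nabla^{S})=H_{1}\times H_{2}$ and preserves the corresponding parallel orthogonal decomposition of $T\tilde{\mathcal{S}}$, so the classical de~Rham decomposition theorem applied to the complete simply connected $\tilde{\mathcal{S}}$ yields $\tilde{\mathcal{S}}=\tilde{\mathcal{S}}_{1}\times\tilde{\mathcal{S}}_{2}$ with $Hol(\tilde{\mathcal{S}}_{i})\subset H_{i}$. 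The point I expect to require the most care is the second bundle-likeness verification in (1), where $g^{R}([Y,V],V)=0$ is not implied by decency alone and must be coaxed out of integrability of $S$; a subsidiary technical step is to trace completeness of $\tilde{\mathcal{S}}$ from completeness of $g^{R}|_{\mathcal{L}^{\perp}}$ through the Blumenthal-Hebda splitting, which is needed for the clean application of de~Rham in (3).
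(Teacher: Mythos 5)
Your skeleton (Blumenthal--Hebda in (1) and (2), then de Rham in (3)) is the paper's, and your part (3) as well as the identification of the two-dimensional factor with $\R^{2}$ are essentially what the paper does. The gap is exactly the step you single out as the real content. The ``matching'' bundle-like condition you want for the foliation tangent to $S$, namely $g^{R}([Y,V],V)=0$ for $Y\in\Gamma(U,S)$, is not a consequence of integrability of $S$: since $[Y,V]=\alpha(Y)V-\nabla^{L}_{V}Y$, one has $g^{R}([Y,V],V)=g^{L}([Y,V],Z)=\alpha(Y)-g^{L}(Z,\nabla^{L}_{V}Y)$, and the vanishing of this expression is precisely the definition of $(X,g^{L},V,S)$ being \emph{almost horizontal}, i.e., $[V,Y]\in S$. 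Integrability of $S$ only controls brackets of sections of $S$ with each other and gives no information on $[V,Y]$; almost horizontality is an independent hypothesis that is not assumed in part (1), so no Koszul computation can produce this identity. The ``symmetric verification'' you propose in (2) breaks down in the same way: besides $g^{L}([Y,V],Z)=0$, which horizontality does give, bundle-likeness of $g^{R}$ for the $S$-foliation would also require $g^{L}(\nabla^{L}_{Z}Z,Y)=0$ for all $Y\in\Gamma(U,S)$, which follows neither from $(L_{Z}g^{L})|_{S\times S}=0$ nor from horizontality.

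The way out is that these conditions are not needed, and this is where your reading of the Blumenthal--Hebda theorem differs from the paper's use of it: the theorem asks for a bundle-like metric with respect to \emph{one} of the two complementary foliations together with integrability (equivalently, total geodesy) of the orthogonal distribution; it does not ask that both foliations be Riemannian. In (1), bundle-likeness of $g^{R}|_{\mathcal{L}^{\perp}}$ for the flow (Lemma \ref{bundle-like-restrictions}) makes the $\Xi$-component of $\nabla^{R}_{Y_{1}}Y_{2}+\nabla^{R}_{Y_{2}}Y_{1}$ vanish for $Y_{i}\in\Gamma(U,S)$, and integrability of $S$ kills the antisymmetric part, so the leaves of $S|_{\mathcal{L}^{\perp}}$ are totally geodesic and the theorem applies directly. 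In (2) the same remark applies with the two-dimensional $(V,Z)$-foliation in place of the flow; that foliation is Riemannian because $(L_{V}g^{R})|_{S\times S}=0$ by Lemma \ref{bundle-like-restrictions} and $(L_{Z}g^{R})|_{S\times S}=(L_{Z}g^{L})|_{S\times S}=0$ by hypothesis. Note finally that demanding bundle-likeness for both integrable foliations would force both distributions to be parallel and put you in the classical de Rham situation, a strictly stronger (isometric) splitting than the diffeomorphic one asserted here --- another sign that this cannot be the right set of hypotheses to verify.
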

\begin{proof}
	Since $g^{R}|_{\mathcal{L}^{\perp}}$ is bundle-like for $(\mathcal{L}^{\perp},\mathcal{X}|_{\mathcal{L}^{\perp}})$ the leaves of
	$S|_{\mathcal{L}^{\perp}}$ are totally geodesic in $\mathcal{L}^{\perp}$ and we can apply the Blumenthal-Hebda theorem.\par
	As we have seen above $V$ and $Z$ induce a 2-dimensional foliation on $X$ if $(X,g^{L},V,S)$ is horizontal and $g^{R}$ is bundle-like for this
	foliation if $(L_{Z}g^{L})|_{S \times S}=0$. The Blumenthal-Hebda theorem implies $\tilde{X}=M \times \tilde{\mathcal{S}}$ where
	$M$ the universal cover of a leaf of the foliation induced by $V$ and $Z$. Since $M$ is a simply connected parallelizable surface the
	uniformization theorem implies $M \cong \R^{2}$.\par
	The last statement follows from the de Rham decomposition theorem since
	$Hol(\nabla^{S}|_{\tilde{\mathcal{S}}}) \subset Hol(\nabla^{S}|_{\tilde{\mathcal{L}}^{\perp}}) \subset Hol^{0}(\nabla^{S})$.
\end{proof}
\begin{remark}
	If $M$ is a compact simply connected manifold and $X \rightarrow M$ is an $S^{1}$-bundle whose Euler class is a generator of $H^{2}(M,\Z)$ then
	the universal cover of $X$ is compact.
	Using \cite{laerz-2008-a} we derive a decent Lorentzian metric on $X \times \R$ which does not admit an integrable realization of the screen
	bundle. In fact, using the Milnor-Wood inequality \cite{MR0293655} we can construct 4-dimensional decent spacetimes such that
	$(\mathcal{L}^{\perp},\mathcal{X}|_{\mathcal{L}^{\perp}})$ does not admit a transverse foliation.\qed
\end{remark}
By Cor. \ref{transverse-levi-civita} and \cite[Prop. 1.6]{MR0370617} the foliated manifold $(\mathcal{L}^{\perp},\mathcal{X}|_{\mathcal{L}^{\perp}})$
admits a transverse $G$-structure if $Hol(\nabla^{S}|_{\mathcal{L}^{\perp}}) \subset G$. Note that
$Hol(\nabla^{S}|_{\mathcal{L}^{\perp}}) \subset Hol(\nabla^{S})$. The classification of Lorentzian holonomy representations, i.e., representations of
$\liealg{hol}(X,g^{L})$ has been achieved by Leistner in \cite{MR2331527} and the hard part is to show that $\liealg{hol}(\nabla^{S})$ acts as a
Riemannian holonomy representation. Galaev \cite{MR2264404} constructed real analytic decent spacetimes for all possible representations of
$\liealg{hol}(\nabla^{S})$ for which $\liealg{hol}(\nabla^{S}|_{\mathcal{L}^{\perp}})$ is trivial for any leaf $\mathcal{L}^{\perp}$ of
$\mathcal{X}^{\perp}$. Since $\liealg{hol}(\nabla^{S})$ has the Borel-Lichn\'erowicz property (cf. \cite[Thm. 2.1]{MR2331527}) we have decompositions
\begin{equation*}
	S_{p}= E_{0} \oplus \ldots \oplus E_{\ell} \quad \text{and}
						\quad Hol^{0}_{p}(\nabla^{S}) = H_{1} \oplus \ldots \oplus H_{\ell}
\end{equation*}
where $H_{j}$ acts irreducibly on $E_{j}$ for $j \geq 1$. If $\gamma:[0,1] \rightarrow X$ is a piecewise smooth curve such that $\gamma(0)=p$ and if
$\tau_{\gamma}^{S}$ is the parallel displacement w.r.t. $\nabla^{S}$ along $\gamma$ we define
$R_{p}^{\tau_{\gamma}^{S}}(v,w):= {\tau_{\gamma}^{S}}^{-1} \circ R^{S}_{\gamma(1)}(w,v) \circ \tau_{\gamma}^{S}$ for $v,w \in S_{\gamma(1)}$. The
Ambrose-Singer theorem and $R^{S}(V,\Xi^{\perp})=0$ imply
\begin{equation*}
	\liealg{hol}_{p}(\nabla^{S}|_{\mathcal{L}^{\perp}}) =\mbox{span}\{ R_{p}^{\tau_{\gamma}^{S}}(\tau_{\gamma}^{S}v,\tau_{\gamma}^{S}w): v,w\in S_{p},
									\gamma:[0,1] \rightarrow \mathcal{L}^{\perp} \}.
\end{equation*}
Moreover, each $R_{p}^{\tau_{\gamma}^{S}}(\tau_{\gamma}^{S}(\cdot),\tau_{\gamma}^{S}(\cdot))$ is an algebraic curvature tensor on $S_{p}$. Hence,
$\liealg{hol}_{p}(\nabla^{S}|_{\mathcal{L}^{\perp}})$ is a Berger algebra in $\liealg{so}(S_{p})$, i.e., it acts as a Riemannian holonomy
representation. Since each subspace $E_{j}$ is $Hol^{0}_{p}(\nabla^{S}|_{\mathcal{L}^{\perp}})$-invariant we may consider
\begin{equation*}
	\mathcal{K}(E_{j}):=\mbox{span}\{ R_{p}^{\tau_{\gamma}^{S}}(\tau_{\gamma}^{S}(\cdot),\tau_{\gamma}^{S}(\cdot))|_{E_{j}\times E_{j}\times E_{j}}\}.
\end{equation*}
Suppose $0 \neq \tilde{R} \in \mathcal{K}(E_{k})$. Then $(E_{k},\tilde{R},H_{k})$ is an irreducible holonomy system and Simons' theorem \cite{MR0148010}
implies that $H_{k}$ acts on $E_{k}$ as a Riemannian holonomy representation.
\begin{lemma}\label{flat-part-split-off}
	Let $(X,g^{L},V)$ be an almost decent spacetime and $S$ a realization of the screen bundle. Suppose there is a leaf
	$\mathcal{L}^{\perp}$ of $\mathcal{X}^{\perp}$ such that $(\mathcal{L}^{\perp},g^{R}|_{\mathcal{L}^{\perp}})$ is complete.
	If $p \in \mathcal{L}^{\perp}$ and $\mathcal{K}(E_{k})=0$ then $\tilde{\mathcal{L}}^{\perp} = A \times \R^{\dim E_{k}}$ where
	$\tilde{\mathcal{L}^{\perp}}$ is the universal cover of $\mathcal{L}^{\perp}$.
\end{lemma}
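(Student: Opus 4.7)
The plan is to promote the vanishing of $\mathcal{K}(E_{k})$ to a genuine metric de Rham–type splitting of $\tilde{\mathcal{L}}^{\perp}$, via the transverse geometry of the Riemannian flow $(\mathcal{L}^{\perp},\mathcal{X}|_{\mathcal{L}^{\perp}},g^{R}|_{\mathcal{L}^{\perp}})$. The starting point is that $E_{k}\subset S_{p}$ is $\operatorname{Hol}^{0}_{p}(\nabla^{S})$-invariant and hence, via parallel transport along curves in $\mathcal{L}^{\perp}$, defines a $\nabla^{T}$-parallel subbundle $\mathcal{E}_{k}\subset S|_{\mathcal{L}^{\perp}}$ (using Cor.\ \ref{transverse-levi-civita} to identify $\nabla^{T}$ with $\nabla^{S}|_{\mathcal{L}^{\perp}}$). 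The metric complement $\mathcal{E}_{k}^{\perp}\subset S|_{\mathcal{L}^{\perp}}$ is then also $\nabla^{T}$-parallel, and so is the decomposition $S|_{\mathcal{L}^{\perp}}=\mathcal{E}_{k}\oplus \mathcal{E}_{k}^{\perp}$.

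Next I would use the hypothesis $\mathcal{K}(E_{k})=0$ to kill the transverse holonomy on the $\mathcal{E}_{k}$ factor. Every generator of $\liealg{hol}_{p}(\nabla^{S}|_{\mathcal{L}^{\perp}})$ preserves the $E_{k}$-block (by Borel–Lichnérowicz), and by the Ambrose–Singer description of $\liealg{hol}_{p}(\nabla^{S}|_{\mathcal{L}^{\perp}})$ recalled just above the lemma, the assumption $\mathcal{K}(E_{k})=0$ means precisely that the restriction of each such generator to $E_{k}$ vanishes. Thus $\operatorname{Hol}^{0}(\nabla^{T}|_{\mathcal{E}_{k}})$ is trivial, and after passing to the universal cover $\tilde{\mathcal{L}}^{\perp}$ the full holonomy is trivial as well. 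This yields a global $\nabla^{T}$-parallel transverse orthonormal frame $\bar Y_{1},\dots,\bar Y_{\dim E_{k}}$ of $\mathcal{E}_{k}$ on $\tilde{\mathcal{L}}^{\perp}$.

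The heart of the argument is to upgrade this transverse parallel frame to a metric splitting of $\tilde{\mathcal{L}}^{\perp}$. I would lift each $\bar Y_{i}$ to the foliate vector field $Y_{i}\in \Gamma(\tilde{\mathcal{L}}^{\perp},S|_{\tilde{\mathcal{L}}^{\perp}})$ given by the realization $S$; the foliate condition $[Y_{i},T\mathcal{X}|_{\mathcal{L}^{\perp}}]\subset T\mathcal{X}|_{\mathcal{L}^{\perp}}$ follows because $\bar Y_{i}$ is $\nabla^{T}$-parallel and $\mathcal{X}|_{\mathcal{L}^{\perp}}$ is a transversely parallelizable Riemannian flow (Lemma \ref{parallel-bundle-like}). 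Bundle-likeness of $g^{R}|_{\mathcal{L}^{\perp}}$ together with $\nabla^{T}\bar Y_{i}=0$ then identifies the $Y_{i}$ as transverse Killing vector fields, and the vanishing of the transverse curvature along $\mathcal{E}_{k}$ forces $[Y_{i},Y_{j}]\in \Gamma(\Xi)$. Because all of this takes place on the simply connected $\tilde{\mathcal{L}}^{\perp}$, one can modify the $Y_{i}$ by sections of $\Xi$ to obtain genuinely commuting complete Killing vector fields on $(\tilde{\mathcal{L}}^{\perp},g^{R}|_{\tilde{\mathcal{L}}^{\perp}})$ whose flows generate a free, isometric action of $\mathbb{R}^{\dim E_{k}}$ with orbits equal to the integral manifolds of $\mathcal{E}_{k}$; completeness of $g^{R}|_{\mathcal{L}^{\perp}}$, which lifts to the cover, is what makes the flows globally defined.

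From here I would invoke a de Rham–type argument to obtain the product structure: the $\nabla^{R}$-parallel, totally geodesic, flat distribution spanned by $Y_{1},\dots,Y_{\dim E_{k}}$ together with its $g^{R}$-orthogonal complement (which contains $\Xi$ and is integrable since the $Y_{i}$ are Killing and commute with each other modulo the leaves) meets the hypotheses of Blumenthal–Hebda \cite{MR699494} used in the preceding corollary. This yields the isometric splitting $\tilde{\mathcal{L}}^{\perp}\cong A\times \mathbb{R}^{\dim E_{k}}$, with $A$ the universal cover of a leaf of the orthogonal distribution, as required. The main technical obstacle I expect is the third step: transferring the vanishing of the transverse curvature on $\mathcal{E}_{k}$ into a fully horizontal, $\nabla^{R}$-parallel, flat distribution on $\tilde{\mathcal{L}}^{\perp}$, rather than merely a transversely parallel one. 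One has to be careful that the corrections by sections of $\Xi$ needed to make the $Y_{i}$ commute and to make the orthogonal distribution integrable can be performed globally, which is exactly where the simple-connectedness of $\tilde{\mathcal{L}}^{\perp}$ and the trivialization of the transverse holonomy are used.
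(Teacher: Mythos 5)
Your opening steps match the paper: the $E_{k}$-block gives a $\nabla^{S}|_{\mathcal{L}^{\perp}}$-parallel subbundle, and $\mathcal{K}(E_{k})=0$ together with the Ambrose--Singer description kills the restricted leaf holonomy on it, so the universal cover carries $\tilde{\nabla}^{S}$-parallel orthonormal sections $Y_{1},\ldots,Y_{\dim E_{k}}$. The gap is exactly at what you call the heart of the argument. Being parallel for the transverse (screen) connection does \emph{not} make the $Y_{i}$ Killing fields of $g^{R}|_{\mathcal{L}^{\perp}}$, and the distribution they span inside $S|_{\mathcal{L}^{\perp}}$ is in general not integrable: the obstruction is the $\Xi$-component $g^{L}(Z,[Y_{i},Y_{j}])$, i.e.\ essentially $d(g^{L}(Z,\cdot))$ evaluated on $E_{k}$-directions, which is the curvature of the realization $S$ viewed as a connection for the flow (cf.\ Cor.~\ref{horizontal-to-orbibundle}) and is in no way controlled by $\mathcal{K}(E_{k})=0$, nor removed by passing to the universal cover. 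Consequently your correction of the $Y_{i}$ by sections of $\Xi$ cannot produce a free \emph{isometric} $\R^{\dim E_{k}}$-action ``with orbits equal to the integral manifolds of $\mathcal{E}_{k}$'': $\mathcal{E}_{k}$ need have no integral manifolds at all, and a de Rham/Blumenthal--Hebda argument based on such an action would deliver an isometric splitting of $\tilde{\mathcal{L}}^{\perp}$ --- strictly stronger than the smooth product the lemma asserts, and false for Heisenberg-type leaves where the transverse metric is flat (so $\mathcal{K}=0$) but $d(g^{L}(Z,\cdot))\neq 0$; these are precisely the non-integrable screen realizations mentioned in the remark following the Blumenthal--Hebda corollary.

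The paper's proof sidesteps this: each $Y_{i}$ is complete because its integral curves are horizontal $g^{R}$-geodesics and $g^{R}|_{\mathcal{L}^{\perp}}$ is complete; then the codimension-one distribution $\mathcal{T}^{Y_{1}}=\mbox{span}\{Y_{1}\}^{\perp}$ (which contains $\Xi$) is involutive because $\tilde{\nabla}^{S}Y_{1}=0$, and it is transversely parallelized by the complete field $Y_{1}$, so Conlon's structure theorem \cite[Prop.~5.3]{MR0370617} splits $\tilde{\mathcal{L}}^{\perp}=A_{Y_{1}}\times\R$ as a smooth (foliated) product; restricting $Y_{2},\ldots,Y_{\dim E_{k}}$ to $A_{Y_{1}}$ and iterating yields $\tilde{\mathcal{L}}^{\perp}=A\times\R^{\dim E_{k}}$. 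To rescue your route you would have to prove integrability of (a corrected version of) the $E_{k}$-distribution, which is exactly the missing content; otherwise one should argue, as the paper does, with the complementary codimension-one foliations rather than with the $E_{k}$-directions themselves.
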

\begin{proof}
	Consider the foliated manifold $(\tilde{\mathcal{L}}^{\perp},\tilde{\mathcal{X}}|_{\mathcal{L}^{\perp}},\tilde{g}^{R}|_{\mathcal{L}^{\perp}})$
	and the lifted connection $\tilde{\nabla}^{S}|_{\mathcal{L}^{\perp}}$. Since $\tilde{\mathcal{L}}^{\perp}$ is simply connected we have
	$\tilde{\nabla}^{S}|_{\mathcal{L}^{\perp}}$-parallel orthonormal sections
	$Y_{1},\ldots,Y_{\dim E_{k}} \in \Gamma(\tilde{\mathcal{L}}^{\perp},\tilde{S})$. An integral curve of any $Y_{i}$ is a horizontal
	$\tilde{g}^{R}|_{\mathcal{L}^{\perp}}$-geodesic. Hence, each $Y_{i}$ is a complete vector field on $\tilde{\mathcal{L}}^{\perp}$.
	Define $\mathcal{T}^{Y_{1}} := \mbox{span}\{Y_{1}\}^{\perp} \subset T\tilde{\mathcal{L}}^{\perp}$. If $W \in \Gamma(U,\mathcal{T}^{Y_{1}})$ is
	a local section then $[W,Y_{1}] \in \tilde{\nabla}^{S}_{W}{Y_{1}} - \tilde{\nabla}^{S}_{Y_{1}}{W} + \tilde{\mathcal{X}}|_{\mathcal{L}^{\perp}}
					\subset \mathcal{T}^{Y_{1}}$.
	Moreover, $\tilde{\nabla}^{S}_{\cdot}\mathcal{T}^{Y_{1}} \subset \mathcal{T}^{Y_{1}}$. Thus, $\mathcal{T}^{Y_{1}}$ induces a transversely
	parallelizable codimension one foliation in $\mathcal{L}^{\perp}$ and \cite[Prop. 5.3]{MR0370617} implies
	$\mathcal{L}^{\perp} = A_{Y_{1}} \times \R$ where $A_{Y_{1}}$ is a leaf of $\mathcal{T}^{Y_{1}}$. For $i \geq 2$ we restrict the vector fields
	$Y_{i}$ to $A_{Y_{1}}$. As above, we derive a transversely parallelizable codimension one foliation on $A_{Y_{1}}$ induced by
	$\mathcal{T}^{Y_{2}}:=\mbox{span}\{Y_{2}|_{A_{Y_{1}}}\}^{\perp}$ and $Y_{2}$ is a complete transverse vector field. Inductively, we have
	$\tilde{\mathcal{L}}^{\perp} = A \times \R^{\dim E_{k}}$
\end{proof}
\begin{theorem}
	Let $(X,g^{L})$ be a time-orientable Lorentzian manifold such that $\liealg{hol}(X,g^{L})$ acts weakly irreducible and reducible. Suppose
	the associated foliation $\mathcal{X}^{\perp}$ admits a compact leaf $\mathcal{L}^{\perp}$ such that $\pi_{1}(\mathcal{L}^{\perp})$ is finite.
	Then $\liealg{hol}(X,g^{L})$ belongs to one of the following types where $\liealg{g}:=\liealg{hol}(\nabla^{S})$.
	\begin{itemize}
		\item \noindent
		Type 1: $\liealg{hol}(X,g^{L}) = (\R \oplus \liealg{g}) \ltimes \R^{\dim X-2}$
		\item \noindent
		Type 2: $\liealg{hol}(X,g^{L}) = \liealg{g} \ltimes \R^{\dim X -2}$
		\item \noindent
		Type 3:
		\begin{equation*}
			\liealg{hol}(X,g^{L}) = \left\lbrace \begin{pmatrix}
							\varphi(A) & w^T & 0\\
							0 & A & -w\\
							0 & 0 & -\varphi(A)
						\end{pmatrix}: A \in \liealg{g},~w \in \R^{q} \right\rbrace
		\end{equation*}
		where $\varphi: \liealg{g} \twoheadrightarrow \R$ is an epimorphism satisfying
		$\varphi|_{[\liealg{g},\liealg{g}]}=0$.
	\end{itemize}
	Moreover, identifying $\liealg{g} \subset \liealg{so}(\dim X -2)$ there are decompositions
	\begin{equation*}
		\R^{\dim X -2} = F_{1} \oplus \ldots \oplus F_{\ell} \quad \text{and}
							\quad \liealg{g} = \liealg{g}_{1} \oplus \ldots \oplus \liealg{g}_{\ell}
	\end{equation*}
	such that each $\liealg{g}_{j}$ acts trivially on $F_{i}$ for $i \neq j$ and as an irreducible Riemannian holonomy representation on $F_{j}$.
	In particular, $\liealg{g}$ does not act trivially on any subspace of $\R^{\dim X -2}$.
\end{theorem}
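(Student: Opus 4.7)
The plan is to apply the Bérard-Bergery--Ikemakhen classification of weakly irreducible reducible Lorentzian holonomy algebras: inside the stabilizer of a lightlike line, $(\R \oplus \liealg{so}(n)) \ltimes \R^{n}$, there are exactly four algebraic types. Types 1, 2 and 3 are precisely those listed in the statement, while the remaining Type 4 is characterized by the existence of a proper $\liealg{g}$-invariant decomposition $\R^{n} = \R^{k} \oplus \R^{n-k}$ on which $\liealg{g}$ acts trivially on the $\R^{n-k}$-summand, together with a twisting of the $\R$-component through this trivial summand. Hence the theorem reduces to two tasks: producing the Borel--Lichnerowicz-type decomposition $F_{1} \oplus \cdots \oplus F_{\ell}$ of $\R^{\dim X - 2}$, and showing that $\liealg{g} = \liealg{hol}(\nabla^{S})$ admits no nonzero fixed vector in $\R^{\dim X -2}$ (which simultaneously excludes Type 4).

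The first task is immediate from Leistner's theorem, which guarantees that $\liealg{g}$ acts as a Riemannian holonomy representation. The Borel--Lichnerowicz decomposition then produces an orthogonal splitting $\R^{\dim X - 2} = E_{0} \oplus E_{1} \oplus \cdots \oplus E_{\ell}$ and $\liealg{g} = \liealg{g}_{1} \oplus \cdots \oplus \liealg{g}_{\ell}$ with each $\liealg{g}_{j}$ acting irreducibly as a Riemannian holonomy on $E_{j}$ and trivially on the remaining summands, and with $E_{0}$ the maximal $\liealg{g}$-fixed subspace.

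The central step is to prove $E_{0} = 0$. Since $\liealg{g}$ acts trivially on $E_{0}$, every parallel-transported curvature endomorphism $R_{p}^{\tau_{\gamma}^{S}}(v,w)$ with $v, w \in E_{0}$ lies in $\liealg{g}$ and hence restricts to zero on $E_{0}$, so $\mathcal{K}(E_{0}) = 0$. Compactness of $\mathcal{L}^{\perp}$ makes $g^{R}|_{\mathcal{L}^{\perp}}$ complete, so Lemma \ref{flat-part-split-off} applies and yields $\tilde{\mathcal{L}}^{\perp} = A \times \R^{\dim E_{0}}$. But the hypothesis that $\pi_{1}(\mathcal{L}^{\perp})$ is finite forces $\tilde{\mathcal{L}}^{\perp}$ to be a compact finite cover of $\mathcal{L}^{\perp}$, which is incompatible with an $\R^{\dim E_{0}}$-factor unless $\dim E_{0} = 0$. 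Setting $F_{j} := E_{j}$ for $1 \leq j \leq \ell$ then gives the announced decomposition, and the nonexistence of a $\liealg{g}$-fixed vector excludes Type 4.

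The main obstacle will be matching up the algebraic normal form of BBI Type 4 with the clean condition ``$\liealg{g}$ has a nontrivial fixed subspace in $\R^{\dim X - 2}$''; this is really bookkeeping, but it must be done carefully because Type 4 involves both the trivial $\liealg{g}$-summand and a linear map on it encoding the twist of the $\R$-component, so one needs to verify that ruling out the summand alone suffices to rule out the whole type. Once that identification is in hand, the geometric core --- Ambrose--Singer, Lemma \ref{flat-part-split-off}, and the compactness of $\tilde{\mathcal{L}}^{\perp}$ coming from $|\pi_{1}(\mathcal{L}^{\perp})| < \infty$ --- is short and direct.
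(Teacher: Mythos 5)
Your proposal is correct and takes essentially the same route as the paper: the paper also reduces to the B\'erard-Bergery--Ikemakhen list, observes that the remaining fourth type would force $\liealg{g}=\liealg{hol}(\nabla^{S})$ to act trivially on a subspace, and excludes this because $\mathcal{K}(E_{0})=0$ together with Lemma \ref{flat-part-split-off} and the compactness of $\tilde{\mathcal{L}}^{\perp}$ (from $|\pi_{1}(\mathcal{L}^{\perp})|<\infty$) rules out any trivial summand. The decomposition statement is obtained, as you indicate, from the Borel--Lichn\'erowicz property of $\liealg{hol}(\nabla^{S})$ and its realization as a Riemannian holonomy representation in the discussion preceding the theorem.
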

\begin{proof}
	The universal cover of $\mathcal{L}^{\perp}$ is compact and Lemma \ref{flat-part-split-off} implies that $\liealg{g}$ does not act trivially on
	any subspace of $\R^{\dim X -2}$. It is shown in \cite{MR1216527} that if $\liealg{hol}(X,g^{L})$ does not belong to one of the three types then
	it is given as follows. There is $0< \ell < q$ such that $\R^{q} = \R^{\ell} \oplus \R^{q-\ell}$, $\liealg{g} \subset \liealg{so}(\ell)$ and
	\begin{equation*}
		\liealg{hol}(X,g^{L}) = \left\lbrace \begin{pmatrix}
							0 & \psi(A)^T & w^T & 0\\
							0 & 0 & 0 & -\psi(A)\\
							0 & 0 & A & -w\\
							0 & 0 & 0 & 0
						\end{pmatrix}: A \in \liealg{g},~w \in \R^{\ell}  \right\rbrace
	\end{equation*}
	for some epimorphism $\psi: \liealg{g} \twoheadrightarrow \R^{q-\ell}$ satisfying $\psi|_{[\liealg{g},\liealg{g}]}=0$. Since $\liealg{g}$ acts
	trivially on $\R^{q-\ell}$ we derive a contradiction.
\end{proof}
Let $A$ be a global section of some tensor bundle of $S$ and suppose that $A|_{\mathcal{L}^{\perp}}$ is invariant under the action of
$Hol(\nabla^{S}|_{\mathcal{L}^{\perp}})$ for any leaf $\mathcal{L}^{\perp}$ of $\mathcal{X}^{\perp}$. Then $A$ is invariant under the action of
$Hol(\nabla^{S})$ if and only if $\nabla^{S}_{Z}{A}=0$. We remind that $d(g^{L}(Z,\cdot))|_{\mathcal{L}^{\perp}}$ induces the Euler class of
$\mathcal{L}^{\perp}$ if $(X,g^{L},V,S)$ is almost horizontal.
\begin{lemma}
	Let $(X,g^{L},V)$ be an almost decent spacetime and $S$ a realization of the screen bundle. If $J \in \Gamma(X,O(S))$ with $J^{2}=-id_{S}$
	then $\nabla^{S}J=0$ if and only if $\nabla^{S}|_{\mathcal{L}^{\perp}}{J|_{\mathcal{L}^{\perp}}}=0$ for any leaf $\mathcal{L}^{\perp}$ of
	$\mathcal{X}^{\perp}$ and
	\begin{align*}
		0 &= d(g^{L}(Z,\cdot))(JY_{1},Y_{2}) + d(g^{L}(Z,\cdot))(Y_{1},JY_{2})\\
		 	&\qquad + g^{L}((L_{Z}{J})(Y_{1}),Y_{2}) - g^{L}((L_{Z}{J})(Y_{2}),Y_{1}).
	\end{align*}
\end{lemma}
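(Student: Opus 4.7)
The plan is to reduce $\nabla^{S}J = 0$ to two independent directional conditions via the decomposition $TX = \Xi^{\perp} \oplus \R Z$. Since $T\mathcal{L}^{\perp} = \Xi^{\perp}$, the leaf condition $\nabla^{S}|_{\mathcal{L}^{\perp}}\,J|_{\mathcal{L}^{\perp}} = 0$ for every leaf is simply $\nabla^{S}_{W}J = 0$ for every $W \in \Xi^{\perp}$, so what remains is to show that the displayed identity is equivalent to $\nabla^{S}_{Z}J = 0$.

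Before computing I would record two structural observations that streamline everything. First, because $J^{2} = -id_{S}$ and $J$ is an isometry, $J$ is skew-symmetric with respect to $g^{L}|_{S}$. Since $\nabla^{S}$ preserves $g^{L}|_{S}$, differentiating this skew-symmetry along $Z$ shows that $\nabla^{S}_{Z}J$ is again skew-symmetric; hence $\nabla^{S}_{Z}J = 0$ is equivalent to the antisymmetric scalar $g^{L}((\nabla^{S}_{Z}J)Y_{1},Y_{2}) - g^{L}((\nabla^{S}_{Z}J)Y_{2},Y_{1})$ vanishing for all $Y_{1},Y_{2} \in S$, which matches the antisymmetric pattern already visible in the displayed identity. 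Second, $g^{L}(\nabla^{L}_{W}Y,V) = -g^{L}(Y,\alpha(W)V) = 0$ for any $Y \in \Gamma(S)$, so $pr_{\Theta}\nabla^{L}_{W}Y = 0$ and hence $\nabla^{L}_{W}Y - \nabla^{S}_{W}Y \in \Xi$; this allows me to swap $\nabla^{L}$ for $\nabla^{S}$ whenever the result is paired with a section of $S$.

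I then expand the three terms on the right-hand side. Torsion-freeness of $\nabla^{L}$ gives $d(g^{L}(Z,\cdot))(A,B) = g^{L}(\nabla^{L}_{A}Z,B) - g^{L}(\nabla^{L}_{B}Z,A)$, which I apply to $(JY_{1},Y_{2})$ and to $(Y_{1},JY_{2})$. Interpreting $L_{Z}J$ as the endomorphism of $S$ defined by $(L_{Z}J)(Y) := pr_{S}([Z,JY]) - J(pr_{S}[Z,Y])$ (which is tensorial in $Y$), a short calculation using the previous paragraph produces $(L_{Z}J)(Y) = (\nabla^{S}_{Z}J)(Y) - pr_{S}(\nabla^{L}_{JY}Z) + J \circ pr_{S}(\nabla^{L}_{Y}Z)$. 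Pairing with $Y_{2}$, using $g^{L}(pr_{S}A,Y_{2}) = g^{L}(A,Y_{2})$ for $Y_{2} \in S$, and the skew-symmetry of $J$ on $S$ to rewrite $g^{L}(J\circ pr_{S}(\nabla^{L}_{Y_{1}}Z),Y_{2}) = -g^{L}(\nabla^{L}_{Y_{1}}Z,JY_{2})$, I antisymmetrize in $(Y_{1},Y_{2})$ and add the two $d$-terms; every contribution involving $\nabla^{L}_{\bullet}Z$ should cancel in pairs, leaving the clean residual $2g^{L}((\nabla^{S}_{Z}J)Y_{1},Y_{2})$. Thus the displayed identity holds for all $Y_{1},Y_{2} \in S$ if and only if $\nabla^{S}_{Z}J = 0$, and combined with the leaf condition this finishes the equivalence.

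The main obstacle is not conceptual but notational: one has to pin down the right convention for $L_{Z}J$ on a bundle whose sections need not be stable under the flow of $Z$, and then track the projections $pr_{S}$ carefully when substituting back into the sum of three terms. Once $\nabla^{L}_{W}Y - \nabla^{S}_{W}Y \in \Xi$ and the skew-symmetry of $\nabla^{S}_{Z}J$ are exploited, the cancellation is forced by the antisymmetry on both sides and the remaining calculation is purely algebraic.
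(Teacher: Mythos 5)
Your proposal is correct and follows essentially the same route as the paper: reduce the equivalence to the single direction $Z$, expand $L_{Z}J$ and the terms $d(g^{L}(Z,\cdot))(JY_{1},Y_{2})$, $d(g^{L}(Z,\cdot))(Y_{1},JY_{2})$ via torsion-freeness, and use that $g^{L}((\nabla^{S}_{Z}J)\cdot,\cdot)=\nabla^{S}_{Z}\omega$ is skew so that the antisymmetrized combination equals $2g^{L}((\nabla^{S}_{Z}J)Y_{1},Y_{2})$. The only cosmetic difference is that the paper extends $J$ to $TX$ by $J(V)=J(Z)=0$ and uses the ordinary Lie derivative, whereas you insert projections $pr_{S}$; these agree once paired with sections of $S$, so the arguments coincide.
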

\begin{proof}
	Define the extension $J \in \Gamma(X,End(TX))$ by $J(V)=J(Z)=0$ and let $\omega(\cdot,\cdot):=g^{L}(J(\cdot),\cdot) \in \Lambda^{2}T^{*}X$.
	Since $(L_{Z}{J})(Y) = [Z,JY] - J([Z,Y])$ we compute for $Y_{1},Y_{2} \in \Gamma(U,S)$
	\begin{align*}
		g^{L}((\nabla^{S}_{Z}{J})(Y_{1}),Y_{2}) &= g^{L}(\nabla^{S}_{Z}{(JY_{1})},Y_{2}) -g^{L}(J\nabla^{S}_{Z}{Y_{1}},Y_{2})\\
				&= g^{L}([Z,JY_{1}],Y_{2})+g^{L}(\nabla^{L}_{JY_{1}}{Z},Y_{2})\\
				&\qquad + g^{L}([Z,Y_{1}],JY_{2}) + g^{L}(\nabla^{L}_{Y_{1}}{Z},JY_{2})\\
				&= g^{L}((L_{Z}{J})(Y_{1}),Y_{2}) +g^{L}(\nabla^{L}_{JY_{1}}{Z},Y_{2})
				+ g^{L}(\nabla^{L}_{Y_{1}}{Z},JY_{2}),~\text{i.e.},
	\end{align*}
	$g^{L}((\nabla^{S}_{Z}{J})(Y_{1}),Y_{2}) - g^{L}((\nabla^{S}_{Z}{J})(Y_{2}),Y_{1}) = g^{L}((L_{Z}{J})(Y_{1}),Y_{2})
			- g^{L}((L_{Z}{J})(Y_{2}),Y_{1})+ d(g^{L}(Z,\cdot))(JY_{1},Y_{2}) + d(g^{L}(Z,\cdot))(Y_{1},JY_{2})$.
	We conclude the statement since $\nabla^{S}_{\cdot}{\omega}$ is a 2-form on $S$ and
	$\nabla^{S}_{Z}{\omega}(Y_{1},Y_{2}) = g^{L}((\nabla^{S}_{Z}{J})(Y_{1}),Y_{2})$.
\end{proof}
In order to estimate the higher Betti numbers we have to use the dual basic cohomology in the Gysin sequence of the flow if the basic cohomology does
not satisfy Poincar{\'e} duality. This is the case if and only if the Riemannian foliation $(\mathcal{L}^{\perp},\mathcal{X}|_{\mathcal{L}^{\perp}})$ is
not taut \cite{habib-richardson-2010}. Here we say $(\mathcal{L}^{\perp},\mathcal{X}|_{\mathcal{L}^{\perp}})$ is taut if
$H^{\dim \mathcal{L}^{\perp} -1}_{B}(\mathcal{X}|_{\mathcal{L}^{\perp}}) \neq 0$ which is equivalent to the vanishing of the {\'A}lvarez-class
$[\kappa_{\tilde{g}^{B}}] \in H^{1}_{B}(\mathcal{X}|_{\mathcal{L}^{\perp}})$.\par
By Cor. \ref{transverse-levi-civita} and \cite[Prop. 1.6]{MR0370617} the condition $\nabla^{S}|_{\mathcal{L}^{\perp}}{J|_{\mathcal{L}^{\perp}}}=0$ means that $J|_{\mathcal{L}^{\perp}}$ induces a K\"{a}hler foliation on $(\mathcal{L}^{\perp},\mathcal{X}|_{\mathcal{L}^{\perp}},g^{R}|_{\mathcal{L}^{\perp}})$.
In particular, basic Dolbeault cohomology is defined on $(\mathcal{L}^{\perp},\mathcal{X}|_{\mathcal{L}^{\perp}})$ \cite{MR1146730}. Suppose that
$\mathcal{L}^{\perp}$ is compact such that $Ric^{L}(W,W) \geq 0$ for all $W \in T\mathcal{L}^{\perp}$ and
$Hol(\nabla^{S}|_{\mathcal{L}^{\perp}})$ is irreducible. As in Prop. \ref{cohom-of-a-leaf} we conclude
$\dim H^{1}_{B}(\mathcal{X}|_{\mathcal{L}^{\perp}})=0$ since there is no $Hol(\nabla^{S}|_{\mathcal{L}^{\perp}})$-invariant vector. Hence,
$(\mathcal{L}^{\perp},\mathcal{X}|_{\mathcal{L}^{\perp}})$ is taut.
\begin{lemma}\label{transverse-calabi-yau-betti}
	Let $(X,g^{L},V)$ be an almost decent spacetime and $\mathcal{L}^{\perp}$ a compact leaf of $\mathcal{X}^{\perp}$. If
	$Ric^{L}|_{T\mathcal{L}^{\perp} \times T\mathcal{L}^{\perp}}=0$ and $Hol(\nabla^{S}|_{\mathcal{L}^{\perp}}) \subset U(n)$ then
	any basic $(p,0)$-form $\psi$ on $(\mathcal{L}^{\perp},\mathcal{X}|_{\mathcal{L}^{\perp}})$ is closed if and only
	if $\nabla^{S}|_{\mathcal{L}^{\perp}}{\psi}=0$.
\end{lemma}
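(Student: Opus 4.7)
The direction $\nabla^{S}|_{\mathcal{L}^{\perp}}\psi = 0 \Rightarrow d\psi = 0$ is immediate from Cor.~\ref{transverse-levi-civita}: since $\nabla^{S}|_{\mathcal{L}^{\perp}}$ is the transverse Levi-Civita connection of the Riemannian flow $(\mathcal{L}^{\perp}, \mathcal{X}|_{\mathcal{L}^{\perp}}, g^{R}|_{\mathcal{L}^{\perp}})$, it is torsion-free on basic tensors, so $d$ acting on basic forms is just the alternation of $\nabla^{T}$.

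For the converse, the hypothesis $Hol(\nabla^{S}|_{\mathcal{L}^{\perp}}) \subset U(n)$ yields via the holonomy principle a $\nabla^{S}|_{\mathcal{L}^{\perp}}$-parallel almost complex structure $J$ on $S|_{\mathcal{L}^{\perp}}$, exhibiting $(\mathcal{L}^{\perp}, \mathcal{X}|_{\mathcal{L}^{\perp}})$ as a transversely K\"{a}hler foliation in the sense of El Kacimi-Alaoui \cite{MR1146730}. The space of basic forms then bigrades and $d|_{B} = \partial_{B} + \bar\partial_{B}$. Splitting $d\psi = 0$ by bidegree gives $\partial_{B}\psi = 0$ and $\bar\partial_{B}\psi = 0$; since $\bar\partial_{B}^{*}\psi$ would lie in bidegree $(p,-1)$, it vanishes for free, so $\psi$ is $\bar\partial_{B}$-harmonic.

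The key step is the transverse Bochner-Kodaira identity for basic $(p,0)$-forms on a K\"{a}hler foliation, i.e.\ the foliated version of the classical identity in which the only curvature contribution is the transverse Ricci tensor contracted with $\psi$. By the computation preceding Prop.~\ref{cohom-of-a-leaf} we have $Ric^{T}(Y_{\alpha}, Y_{\beta}) = Ric^{L}(Y_{\alpha}, Y_{\beta})$ on the screen bundle, so the hypothesis $Ric^{L}|_{T\mathcal{L}^{\perp} \times T\mathcal{L}^{\perp}} = 0$ forces $Ric^{T} = 0$ and kills this curvature term. Pairing the resulting Weitzenb\"{o}ck identity with $\psi$ and integrating over the compact $\mathcal{L}^{\perp}$ then yields $\|\nabla^{T}\psi\|_{L^{2}}^{2} = 0$, whence $\nabla^{S}|_{\mathcal{L}^{\perp}}\psi = 0$.

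The main obstacle is controlling the mean-curvature contribution in the transverse Weitzenb\"{o}ck identity of \cite{habib-richardson-2010}: the classical $(p,0)$ Bochner-Kodaira formula must be transported to a Riemannian flow whose mean curvature need not vanish. The cleanest route is to first pass to a bundle-like metric $\tilde g^{B}$ making $\kappa_{\tilde g^{B}}$ basic and harmonic, and then either (i) establish tautness of $(\mathcal{L}^{\perp}, \mathcal{X}|_{\mathcal{L}^{\perp}})$, so that $\kappa_{\tilde g^{B}} = 0$ and the classical identity applies verbatim -- tautness being accessible along the lines of the paragraph just above this lemma whenever the $U(n)$-structure admits no parallel section of $S^{*}|_{\mathcal{L}^{\perp}}$ -- or (ii) retain the non-negative $\tfrac{1}{4}|\kappa_{\tilde g^{B}}|^{2}$ term in the twisted Weitzenb\"{o}ck identity and observe that it only strengthens the Bochner estimate, leaving the conclusion $\nabla^{T}\psi = 0$ intact.
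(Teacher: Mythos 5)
Your easy direction and the overall skeleton (parallel transverse complex structure from $Hol(\nabla^{S}|_{\mathcal{L}^{\perp}})\subset U(n)$, basic Dolbeault bigrading, a transverse Bochner--Kodaira argument with $Ric^{T}=Ric^{L}|_{T\mathcal{L}^{\perp}\times T\mathcal{L}^{\perp}}=0$) agree with the paper. But the step you yourself single out as the main obstacle --- the mean curvature of the flow --- is not actually resolved, and both of your proposed routes fail as stated. Route (i): tautness is not implied by the hypotheses. The lemma only assumes $Hol(\nabla^{S}|_{\mathcal{L}^{\perp}})\subset U(n)$, which allows invariant vectors (even trivial screen holonomy), so the argument of the paragraph preceding the lemma (which needs irreducibility to kill $H^{1}_{B}$) is unavailable, and your conditional ``whenever the $U(n)$-structure admits no parallel section'' does not cover the general case. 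Route (ii): the sign heuristic is backwards. In the Habib--Richardson formula $\Delta_{\kappa}\psi=\nabla^{T*}\nabla^{T}\psi+\sum_{i,j}e^{j}\wedge e_{i}\lrcorner R^{T}(e_{i},e_{j})\psi+\frac{1}{4}|\kappa|^{2}\psi$ the nonnegative term sits on the \emph{same} side as $|\nabla^{T}\psi|^{2}$, so ``retaining'' it helps only if you control $\int_{\mathcal{L}^{\perp}}\skal{\Delta_{\kappa}\psi}{\psi}$ --- and $\psi$ is not $\Delta_{\kappa}$-harmonic: your argument only yields $\bar{\partial}_{B}$-harmonicity.

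What is needed, and what the paper does, is the exact evaluation of that twisted pairing. First use the transverse K\"ahler identities to upgrade $\Delta_{b}^{\bar{\partial}}\psi=0$ to $\Delta_{b}\psi=0$, hence $d\psi=\delta_{b}\psi=0$ (your proposal never produces $\delta_{b}\psi=0$). Then, since $d_{\kappa}\psi=-\tfrac{1}{2}\kappa\wedge\psi$ and $\delta_{\kappa}\psi=-\tfrac{1}{2}\kappa^{\sharp}\lrcorner\psi$, one gets $\int_{\mathcal{L}^{\perp}}\skal{\Delta_{\kappa}\psi}{\psi}=\|d_{\kappa}\psi\|^{2}+\|\delta_{\kappa}\psi\|^{2}=\tfrac{1}{4}\int_{\mathcal{L}^{\perp}}|\kappa|^{2}|\psi|^{2}$, so the $\tfrac{1}{4}|\kappa|^{2}$ contributions cancel \emph{exactly} (they neither vanish nor ``strengthen'' anything), leaving $0=\int_{\mathcal{L}^{\perp}}|\nabla^{T}\psi|^{2}+\int_{\mathcal{L}^{\perp}}\skal{\sum_{i,j}e^{j}\wedge e_{i}\lrcorner R^{T}(e_{i},e_{j})\psi}{\psi}$; the curvature term then vanishes because $R^{T}$ has the symmetries of a K\"ahler curvature tensor and $Ric^{T}=0$, exactly as you intended. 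Without the computation of $\int\skal{\Delta_{\kappa}\psi}{\psi}$ your Bochner estimate does not close, so the converse direction as written has a genuine gap.
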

\begin{proof}
	One part of the proof is implied by $d\psi = \sum_{i=1}^{\dim S}{e^{i} \wedge \nabla^{S}_{e_{i}}\psi}$.\par
	Since $\psi$ is a $(p,0)$-form we have $\bar{\delta}_{b}\psi=0$ and $d\psi=0$ implies $\bar{\partial}\psi=0$, i.e.,
	$\Delta_{b}^{\bar{\partial}}\psi = 0$. Thus, $\Delta_{b}\psi=0$ by the transverse K\"{a}hler identities \cite{MR1146730} and we have
	$d\psi =\delta_{b}\psi =0$ implying $\int_{\mathcal{L}^{\perp}}{\skal{\Delta_{\kappa}\psi}{\psi}}=\frac{1}{4}|\kappa|^{2}|\psi|^{2}$. By the
	Weitzenb\"{o}ck formula
	\begin{equation*}
		0 = \int_{\mathcal{L}^{\perp}}{|\nabla^{T}\psi|^{2}}
				+ \int_{\mathcal{L}^{\perp}}{\skal{\sum\nolimits_{i,j}{e^{j} \wedge e_{i} \lrcorner R^{T}(e_{i},e_{j})\psi}}{\psi}}.
	\end{equation*}
	However, $R^{T}$ beeing the curvature of $\nabla^{S}|_{\mathcal{L}^{\perp}}$ has the same symmetries as the curvature tensor of a K\"{a}hler
	manifold. Using the computation in \cite[Prop. 6.2.4]{MR1787733} we conclude
	$\sum_{i,j}{e^{j} \wedge e_{i} \lrcorner R^{T}(e_{i},e_{j})\psi}=0$, i.e., $0 = \int_{\mathcal{L}^{\perp}}{|\nabla^{T}\psi|^{2}}$.
\end{proof}
\begin{proposition}
	Let $(X,g^{L},V)$ be a decent spacetime and $\mathcal{L}^{\perp}$ a compact leaf of $\mathcal{X}^{\perp}$. Suppose
	$Hol(\nabla^{S}|_{\mathcal{L}^{\perp}})$ is irreducible and $Ric^{L}(W,W) \geq 0$ for all $W \in T\mathcal{L}^{\perp}$.
	\begin{enumerate}
		\item
		If $X$ is compact then $b_{1}(X) \in \{1,2\}$ and $b_{2}(X) \leq \dim H^{2}_{B}(\mathcal{X}|_{\mathcal{L}^{\perp}})+1$.
		\item
		If $X$ is non-compact and all leaves of $\mathcal{X}^{\perp}$ are compact then $b_{1}(X) \in \{0,1\}$ and
		$b_{2}(X) \in \{\dim H^{2}_{B}(\mathcal{X}|_{\mathcal{L}^{\perp}})-1,\dim H^{2}_{B}(\mathcal{X}|_{\mathcal{L}^{\perp}})\}$.
	\end{enumerate}
	Moreover, if $Hol(\nabla^{S}|_{\mathcal{L}^{\perp}})=SU(n)$ with $n \geq 3$ we can replace $H^{2}_{B}(\mathcal{X}|_{\mathcal{L}^{\perp}})$
	by $H^{1,1}_{B}(\mathcal{X}|_{\mathcal{L}^{\perp}})$ and if $Hol(\nabla^{S}|_{\mathcal{L}^{\perp}})=Sp(n)$ with $n \geq 1$ we can replace
	$\dim H^{2}_{B}(\mathcal{X}|_{\mathcal{L}^{\perp}})$ by $\dim H^{1,1}_{B}(\mathcal{X}|_{\mathcal{L}^{\perp}}) + 2$.
\end{proposition}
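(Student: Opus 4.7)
My plan is to first bound $b_1(\mathcal{L}^\perp)$ and $b_2(\mathcal{L}^\perp)$ via the Gysin sequence of the Riemannian flow $(\mathcal{L}^\perp,\mathcal{X}|_{\mathcal{L}^\perp})$, and then to transfer these bounds to $X$ through the mapping-torus/Wang argument (compact case) or the product decomposition $X\cong\mathcal{L}^\perp\times\R$ (non-compact case) described just before Cor.~\ref{bounds-for-cohom}.

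The first step is to establish that the flow is taut. As noted in the discussion preceding Lemma~\ref{transverse-calabi-yau-betti}, the irreducibility of $Hol(\nabla^S|_{\mathcal{L}^\perp})$ together with $Ric^L|_{T\mathcal{L}^\perp}\geq0$ forces $H^1_B(\mathcal{X}|_{\mathcal{L}^\perp})=0$ via the Bochner argument of Prop.~\ref{cohom-of-a-leaf}. Since the \'Alvarez class $[\kappa_{\tilde g^B}]$ lives in $H^1_B$, it vanishes, so I may choose the bundle-like metric with $\kappa_{\tilde g^B}=0$; with this choice the dual basic cohomology coincides with $H^{*}_B$ and the Gysin sequence reduces in low degree to
\begin{equation*}
 0\to H^1(\mathcal{L}^\perp,\R)\to H^0_B\xrightarrow{[\,\cdot\,\wedge\mathbf{e}]}H^2_B(\mathcal{X}|_{\mathcal{L}^\perp})\to H^2(\mathcal{L}^\perp,\R)\to 0.
\end{equation*}
Using $H^0_B=\R$ I read off $b_1(\mathcal{L}^\perp)\in\{0,1\}$ together with the identity $b_2(\mathcal{L}^\perp)=\dim H^2_B(\mathcal{X}|_{\mathcal{L}^\perp})-1+b_1(\mathcal{L}^\perp)$.

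For the transfer to $X$, the non-compact case is immediate: $b_i(X)=b_i(\mathcal{L}^\perp)$ yields both dichotomies directly. In the compact case the Wang-type exact sequence of the mapping torus gives $b_i(X)\leq b_i(\mathcal{L}^\perp)+b_{i-1}(\mathcal{L}^\perp)$, and combining this with $b_1(X)\geq1$ from Cor.~\ref{all-leaves-closed} and the identity above, I obtain $b_1(X)\in\{1,2\}$ and $b_2(X)\leq 2b_1(\mathcal{L}^\perp)+\dim H^2_B(\mathcal{X}|_{\mathcal{L}^\perp})-1\leq\dim H^2_B(\mathcal{X}|_{\mathcal{L}^\perp})+1$.

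For the refinement, if $Hol(\nabla^S|_{\mathcal{L}^\perp})\in\{SU(n),Sp(n)\}$ the transverse geometry is Ricci-flat K\"ahler, so $Ric^T=0$ on $S$; combined with the identity $Ric^L(V,\cdot)|_{\Xi^\perp}=0$ noted just before Prop.~\ref{cohom-of-a-leaf} this upgrades to $Ric^L|_{T\mathcal{L}^\perp}=0$, allowing me to apply Lemma~\ref{transverse-calabi-yau-betti} and identify $H^{2,0}_B$ with the $\nabla^T$-parallel $(2,0)$-forms on $S$. A representation-theoretic calculation of invariants in $\Lambda^{2,0}\C^n$ (resp.\ $\Lambda^{2,0}\C^{2n}$) then shows this space is zero for $SU(n)$ with $n\geq3$ and one-dimensional (spanned by the complex symplectic form) for $Sp(n)$ with $n\geq1$; conjugation handles $H^{0,2}_B$, and the Hodge decomposition $H^2_B=H^{2,0}_B\oplus H^{1,1}_B\oplus H^{0,2}_B$ furnishes the claimed replacements. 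The step I expect to be most delicate is the reduction from the general Gysin sequence involving dual basic cohomology to the simplified form above; it requires checking that choosing a minimal bundle-like metric makes $H^{*}_{d-\kappa}$ agree with $H^{*}_B$, but this follows from the chain isomorphism $\psi\mapsto e^{f}\psi$ once $\kappa=df$ is basic-exact.
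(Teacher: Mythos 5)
Your argument is correct and follows essentially the same route as the paper: vanishing of $H^{1}_{B}$ via the Bochner argument and irreducibility gives tautness, the Gysin sequence controls $b_{1}(\mathcal{L}^{\perp})$ and $b_{2}(\mathcal{L}^{\perp})$, the mapping-torus (Wang/Mayer--Vietoris) sequence resp.\ the product $X\cong\mathcal{L}^{\perp}\times\R$ transfers the bounds to $X$, and the $SU(n)$/$Sp(n)$ refinements come from Lemma \ref{transverse-calabi-yau-betti} together with the count of holonomy-invariant $(2,0)$-forms. Your extra verifications (transverse Ricci-flatness so that the lemma applies, and the identification $H^{*}_{d-\kappa}\cong H^{*}_{B}$ in the taut case) are details the paper leaves implicit, and they are handled correctly.
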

\begin{proof}
	Since $\dim H^{1}_{B}(\mathcal{X}|_{\mathcal{L}^{\perp}})=0$ and $(\mathcal{L}^{\perp},\mathcal{X}|_{\mathcal{L}^{\perp}})$ is taut we derive
	the bounds for $b_{1}(X)$ and the Gysin sequence implies
	$\R \stackrel{[\cdot \wedge \mathbf{e}]}{\longrightarrow} H^{2}_{B}(\mathcal{X}|_{\mathcal{L}^{\perp}})
			\longrightarrow H^{2}(\mathcal{L}^{\perp},\R) \longrightarrow 0$,
	i.e., $b_{2}(\mathcal{L}^{\perp})\in \{\dim H^{2}_{B}(\mathcal{X}|_{\mathcal{L}^{\perp}})-1,\dim H^{2}_{B}(\mathcal{X}|_{\mathcal{L}^{\perp}})\}$.
	If $X$ is compact the Mayer-Vietoris argument implies
	$H_{2}(\mathcal{L}^{\perp}) \stackrel{Id -F^{2}_{*}}{\longrightarrow} H_{2}(\mathcal{L}^{\perp})
				\stackrel{\iota_{*}}{\longrightarrow} H_{2}(X) \longrightarrow H_{1}(\mathcal{L}^{\perp})
					\stackrel{Id -F^{1}_{*}}{\longrightarrow} H_{1}(\mathcal{L}^{\perp})$.
	Hence, $b_{2}(X) = b_{1}(\mathcal{L}^{\perp}) + \dim \mbox{Eig}_{1}(F^{2}_{*})$, where $\mbox{Eig}_{1}(F^{2}_{*})$ is the eigenspace of
	$F^{2}_{*}$ w.r.t. the eigenvalue $1$, i.e.,
	$b_{2}(X) \leq b_{1}(\mathcal{L}^{\perp}) + b_{2}(\mathcal{L}^{\perp}) \leq \dim H^{2}_{B}(\mathcal{X}|_{\mathcal{L}^{\perp}}) +1$.
	The last statements follow from Lemma \ref{transverse-calabi-yau-betti}.
\end{proof}
\begin{proposition}
	Let $(X,g^{L},V,S)$ be a horizontal spacetime and $\mathcal{L}^{\perp}$ a leaf of $\mathcal{X}^{\perp}$. Suppose
	$d(g^{L}(Z,\cdot))|_{\mathcal{L}^{\perp}} \in \Lambda^{1,1}_{B}\mathcal{X}|_{\mathcal{L}^{\perp}}$ for some $\nabla^{S}$-parallel almost
	Hermitian structure $J$ on $S$. If $Z \in \Gamma(X,TX)$ is complete then there exists a complex structure on the universal cover of $X$.
\end{proposition}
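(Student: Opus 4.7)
My plan is to define a global almost complex structure $\tilde{J}$ on $TX$, lift it to $T\tilde{X}$, and verify integrability via the Newlander--Nirenberg theorem. Set $\tilde{J}|_{S} := J$, $\tilde{J}V := Z$, and $\tilde{J}Z := -V$. Since $\Xi\oplus \Theta$ is a rank-$2$ complement of $S$ on which $\tilde{J}$ acts as a quarter-turn and $J^{2}=-\mathrm{id}_{S}$, we obtain $\tilde{J}^{2}=-\mathrm{id}_{TX}$. It then suffices to show that the Nijenhuis tensor $N_{\tilde{J}}$ vanishes after lifting to $T\tilde{X}$; since $N_{\tilde{J}}$ is tensorial, I will check it pointwise on a local frame $(V,Z,Y_{1},\ldots,Y_{\dim S})$ with $(Y_{\alpha})$ an orthonormal frame of $S$.

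The first easy case is $N_{\tilde{J}}(V,Z)=0$: horizontality gives $\nabla^{L}_{V}Z=0$ and $[V,Z]=-\alpha(Z)V$, so all four bracket terms cancel. For $N_{\tilde{J}}(Y_{1},Y_{2})$ with $Y_{1},Y_{2}\in S$, integrability of $\mathcal{X}^{\perp}$ gives $[Y_{1},Y_{2}]\in \Xi\oplus S$, so the $\Theta$-components of every bracket in $N_{\tilde{J}}$ vanish. The $\Xi$-coefficient of $[Y_{1},Y_{2}]$ is precisely $-d(g^{L}(Z,\cdot))(Y_{1},Y_{2})$, and the hypothesis $d(g^{L}(Z,\cdot))(JY_{1},JY_{2})=d(g^{L}(Z,\cdot))(Y_{1},Y_{2})$ on $\mathcal{L}^{\perp}$ forces both the $\Xi$- and $\Theta$-components of $N_{\tilde{J}}(Y_{1},Y_{2})$ to cancel. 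The residual $S$-component is the basic Nijenhuis tensor of $J$ on the transversally K\"{a}hler foliation $(\mathcal{L}^{\perp},\mathcal{X}|_{\mathcal{L}^{\perp}})$, which vanishes because $\nabla^{S}J=0$ and $\nabla^{S}|_{\mathcal{L}^{\perp}}$ is the torsion-free transverse Levi-Civita connection by Cor.\ \ref{transverse-levi-civita}.

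The mixed cases $N_{\tilde{J}}(V,Y)$ and $N_{\tilde{J}}(Z,Y)$ require more care. Horizontality $[V,Y]\in S$ together with $\nabla^{L}_{V}Y=[V,Y]=\nabla^{S}_{V}Y$ and $\nabla^{S}J=0$ yields $[V,JY]=J[V,Y]$, which makes both expressions collapse to $\tilde{J}[Z,Y]-[Z,JY]$. Here I invoke Cor.\ \ref{all-leaves-closed}(2): the completeness of $Z$ provides a diffeomorphism $\tilde{X}\cong \tilde{\mathcal{L}}^{\perp}\times \mathbb{R}$ with $Z$ corresponding to $\partial_{t}$. Extending each $Y_{\alpha}\in S_{p}$ to a local vector field that is invariant under the $Z$-flow reduces the bracket $[Z,Y_{\alpha}]$ to terms controlled by $\nabla^{L}_{Z}Z$ and $\nabla^{S}_{Z}$. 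The $S$-component of the required equality $\tilde{J}[Z,Y]=[Z,JY]$ translates to the identity $J\nabla^{L}_{Y}Z = \nabla^{L}_{JY}Z$, which I obtain by combining the $(1,1)$ hypothesis on $d(g^{L}(Z,\cdot))|_{S\times S}$ with $\nabla^{S}_{Z}J=0$; the $\Xi\oplus \Theta$-components are governed by $d(g^{L}(Z,\cdot))(Z,Y)$, which also vanish under these conditions on $\tilde{X}$. Having shown $N_{\tilde{J}}=0$, Newlander--Nirenberg produces a complex structure on $\tilde{X}$.

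The main obstacle is the mixed case: unlike the tangential $(Y_{1},Y_{2})$-computation, where the $(1,1)$-hypothesis applies directly to the bracket within $\mathcal{L}^{\perp}$, here one must convert a leafwise condition into control over the $Z$-direction. This is exactly the role of completeness of $Z$ and the resulting product splitting on $\tilde{X}$; without them the parallel transport of $J$ across leaves could not be matched with the transverse derivative of the $S$-bundle.
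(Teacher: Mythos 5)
Your tangential computations (the cases $N(V,Z)$ and $N(Y_{1},Y_{2})$ with $Y_{i}\in S$) are fine and in fact mirror the leafwise identities the paper needs, but the mixed case is a genuine gap, and it is not a technical one: under the stated hypotheses your almost complex structure $\tilde{J}$ (with $\tilde{J}V=Z$, $\tilde{J}Z=-V$) need not be integrable at all. After your reductions the requirement is $\tilde{J}[Z,Y]=[Z,JY]$ for $Y\in S$. Splitting $[Z,Y]=\eta([Z,Y])V+\nabla^{S}_{Z}Y-pr_{S}\nabla^{L}_{Y}Z$ with $\eta:=g^{L}(Z,\cdot)$, the term $\nabla^{S}_{Z}$ is handled by $\nabla^{S}_{Z}J=0$, but two independent pieces remain. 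First, you need $pr_{S}\nabla^{L}_{JY}Z=J\,pr_{S}\nabla^{L}_{Y}Z$, i.e. $J$-invariance of the full bilinear form $B(Y_{1},Y_{2}):=g^{L}(\nabla^{L}_{Y_{1}}Z,Y_{2})$ on $S$. The hypothesis $d\eta|_{\mathcal{L}^{\perp}}\in\Lambda^{1,1}_{B}$ only controls the antisymmetric part $B-B^{T}=d\eta|_{S\times S}$; the symmetric part is $\tfrac12 (L_{Z}g^{L})|_{S\times S}$, which is completely unconstrained here (elsewhere the paper must add $(L_{Z}g^{L})|_{S\times S}=0$ as a separate hypothesis when it needs such control, and the Lemma on $\nabla^{S}J=0$ shows that $\nabla^{S}_{Z}J=0$ only constrains an antisymmetrized combination of $d\eta$ and $L_{Z}J$). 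Second, the $\Xi$-component of $[Z,JY]$ is $-d\eta(Z,JY)=-g^{L}(\nabla^{L}_{Z}Z,JY)$, and nothing in the hypotheses forces $pr_{S}\nabla^{L}_{Z}Z=0$: the $(1,1)$ condition concerns $d\eta$ restricted to the leaf and says nothing about $d\eta(Z,\cdot)$. Choosing $Z$-flow-invariant extensions of the $Y_{\alpha}$ does not help: with $[Z,Y]=0$ the requirement becomes $[Z,JY]=(L_{Z}J)(Y)=0$, which is exactly the uncontrolled quantity.

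The paper's proof avoids this transverse problem entirely, and that is the essential idea you are missing. It does not put any almost complex structure on $X$ involving $Z$. Instead, by Cor.~\ref{all-leaves-closed} (using completeness of $Z$) the universal cover is diffeomorphic to $\tilde{\mathcal{L}}^{\perp}\times\R^{+}$; one equips the leaf with the almost contact metric structure $(V,\eta,\Phi,g^{R}|_{\mathcal{L}^{\perp}})$, where $\Phi|_{S}=J$, $\Phi(V)=0$, and equips the product with the cone metric $dr^{2}+r^{2}g^{R}|_{\mathcal{L}^{\perp}}$ and the cone almost complex structure $I$ with $I|_{S}=J$, $IV=r\partial_{r}$. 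Here $J$ and $g^{R}|_{\mathcal{L}^{\perp}}$ are extended $r$-independently from a single leaf, so no derivative of $J$ or of the metric in the $Z$-direction ever enters. Integrability of $I$ is then equivalent (by the cited cone/normality criterion) to the purely leafwise condition $N_{\Phi}=-V\otimes d\eta$, which is verified using only horizontality, $\nabla^{S}J=0$ along the leaf (via Cor.~\ref{transverse-levi-civita}), and the $(1,1)$ hypothesis --- essentially the computations you already did correctly in the tangential cases. So your ``main obstacle'' is an artifact of the choice $\tilde{J}V=Z$; as stated, the proposition does not provide the data needed to overcome it, and the proof must be rerouted through the contact-cone construction (or extra hypotheses such as $J$-invariance of $(L_{Z}g^{L})|_{S\times S}$ and $pr_{S}\nabla^{L}_{Z}Z=0$ must be imposed).
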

\begin{proof}
	By Cor. \ref{all-leaves-closed} the universal cover of $X$ is diffeomorphic to $\tilde{X}:=\mathcal{L}^{\perp} \times \R^{+}$. We write $r$
	for the coordinate on $\R^{+}$ and $\eta:=g^{L}(Z,\cdot)|_{T\mathcal{L}^{\perp}}$. If $\Phi \in \mbox{End}(T\mathcal{L}^{\perp})$ is given by
	$\Phi(w \in S_{p}):=J(w)$ and $\Phi(V):=0$ then $(V,\eta,\Phi,g^{R}|_{\mathcal{L}^{\perp}})$ defines an almost contact metric structure on
	$\mathcal{L}^{\perp}$. On $\tilde{X}$ we define the cone metric $g^{C}:=dr^{r} + r^{2}g^{R}|_{\mathcal{L}^{\perp}}$ and the section
	$I \in \mbox{End}(T\tilde{X})$ by
	\begin{equation*}
		IY := \begin{cases}
			JY &\text{if}~Y \in S_{p},\\
			r\partial_{r} &\text{if}~Y=V,\\
			-V &\text{if}~Y=r\partial_{r}.
		      \end{cases}
	\end{equation*}
	Hence, we derive an almost Hermitian manifold $(\tilde{X},I,g^{C})$. By \cite[Thm. 6.5.9]{MR2382957} $I$ is integrable once we
	prove\footnote{In contrast to \cite{MR2382957} we define $d\eta(Y_{1},Y_{2}):= Y_{1}\eta(Y_{2})-Y_{2}\eta(Y_{1})-\eta([Y_{1},Y_{2}])$.} that
	$N_{\Phi}=-V \otimes d\eta$ where
	$N_{\Phi}(Y_{1},Y_{2}):=[\Phi Y_{1},\Phi Y_{2}] +\Phi^{2}[Y_{1},Y_{2}] - \Phi[Y_{1},\Phi Y_{2}] -\Phi[\Phi Y_{1},Y_{2}]$ for
	$Y_{\cdot} \in T\mathcal{L}^{\perp}$. Let $Y_{1} \in S$ and $Y_{2}=V$. Since $(X,g^{L},V,S)$ is horizontal we have
	$[Y_{1},V]=-\nabla^{S}_{V}{Y_{1}}$. Thus, $\Phi V =0$ and $J \circ \nabla^{S} = \nabla^{S}\circ J$ implies
	\begin{align*}
		N_{\Phi}(Y_{1},V) &= \Phi^{2}[Y_{1},V] - \Phi[\Phi Y_{1},V] = -\Phi^{2}(\nabla^{S}_{V}{Y_{1}}) + \Phi(\nabla^{S}_{V}{\Phi Y_{1}})\\
				&=-J^{2}(\nabla^{S}_{V}{Y_{1}}) + J(\nabla^{S}_{V}{JY_{1}}) =0.
	\end{align*}
	The same way we compute $g^{L}(N_{\Phi}(Y_{1},Y_{2}),Y_{3})=0$ if $Y_{1},Y_{2},Y_{3} \in S$. For $Y_{1},Y_{2} \in S$ we have
	\begin{align*}
		g^{L}(N_{\Phi}(Y_{1},Y_{2}),Z) &= g^{L}([\Phi Y_{1},\Phi Y_{2}],Z) = g^{L}([JY_{1},JY_{2}],Z)\\
					&= -g^{L}(\nabla^{L}_{JY_{1}}{Z},JY_{2}) + g^{L}(\nabla^{L}_{JY_{2}}{Z},JY_{1})\\
					&=-d(g^{L}(Z,\cdot))|_{\mathcal{L}^{\perp}}(JY_{1},JY_{2}).
	\end{align*}
	We conclude $g^{L}(N_{\Phi}(Y_{1},Y_{2}),Z) = -d\eta(Y_{1},Y_{2})$ since
	$d(g^{L}(Z,\cdot))|_{\mathcal{L}^{\perp}} \in \Lambda^{1,1}_{B}\mathcal{X}|_{\mathcal{L}^{\perp}}$ and
	$d\eta = d(g^{L}(Z,\cdot))|_{\mathcal{L}^{\perp}}$.
\end{proof}
%
%
%
%
\bibliographystyle{amsalpha}
\nocite{*}
\bibliography{topol-pp-wave}
\end{document}